\newcommand\Osq{\mathbin{\text{\scalebox{.84}{$\square$}}}}
\newcommand*{\msc}[2][]{\href{https://mathscinet.ams.org/mathscinet/search/mscdoc.html?code=#2,(#1)}{Primary: #2\ifthenelse{\isempty{#1}}{}{; Secondary: #1}.}}
\declaretheoremstyle[
  headfont=\color{blue}\normalfont\bfseries,
  bodyfont=\color{blue}\normalfont\itshape,
]{colored}
\theoremstyle{plain}
\newtheorem{thm}{Theorem}[subsection]
\newtheorem{lemma}[thm]{Lemma}
\newtheorem{prop}[thm]{Proposition}
\newtheorem{cor}[thm]{Corollary}
\theoremstyle{definition}
\newtheorem{defi}[thm]{Definition}
\theoremstyle{remark}
\newtheorem{exam}[thm]{Example}
\newtheorem{rema}[thm]{Remark}
\newtheorem{introquestion}{Question}
\newtheoremstyle{cited}%
  {3pt}
  {3pt}
  {\itshape}
  {}
  {\bfseries}
  {.}
  {.5em}
  {\thmname{#1} \thmnumber{#2} \thmnote{\normalfont#3}}
\theoremstyle{cited}
\newtheorem{citedthm}[thm]{Theorem}
\newtheorem{citedprop}[thm]{Proposition}
\newtheorem{theoA}{Theorem}
\newcommand{\lrightarrows}{\mathrel{\raise.75ex\hbox{\oalign{%
  $\scriptstyle\rightarrow$\cr
  \vrule width0pt height.5ex$\hfil\scriptstyle\relbar$\cr}}}}
\newcommand{\leftrarrows}{\mathrel{\raise.75ex\hbox{\oalign{%
  $\scriptstyle\relbar$\hfil\cr
  $\scriptstyle\vrule width0pt height.5ex\smash\leftarrow$\cr}}}}
\newcommand{\Rrelbar}{\mathrel{\raise.75ex\hbox{\oalign{%
  $\scriptstyle\relbar$\cr
  \vrule width0pt height.5ex$\scriptstyle\relbar$}}}}
\def\rightleftarrowsfill@{\arrowfill@\leftrarrows\Rrelbar\lrightarrows}
\newcommand{\xrightleftarrows}[2][]{\ext@arrow 3399\rightleftarrowsfill@{#1}{#2}}
\newcommand{\leftrLRarrows}{\mathrel{\raise.75ex\hbox{\oalign{%
  $\scriptstyle\leftarrow$\cr
  \vrule width0pt height.5ex$\hfil\scriptstyle\relbar$\cr}}}}
\newcommand{\lrightLRarrows}{\mathrel{\raise.75ex\hbox{\oalign{%
  $\scriptstyle\relbar$\hfil\cr
  $\scriptstyle\vrule width0pt height.5ex\smash\rightarrow$\cr}}}}
\newcommand{\RrelbarLR}{\mathrel{\raise.75ex\hbox{\oalign{%
  $\scriptstyle\relbar$\cr
  \vrule width0pt height.5ex$\scriptstyle\relbar$}}}}
\def\leftrightarrowsfill@{\arrowfill@\leftrLRarrows\RrelbarLR\lrightLRarrows}
\newcommand{\xleftrightarrows}[2][]{\ext@arrow 3399\leftrightarrowsfill@{#1}{#2}}
\newcommand{\A}{\ensuremath{\mathcal{A}}}
\newcommand{\B}{\ensuremath{\mathcal{B}}}
\newcommand{\C}{\ensuremath{\mathcal{C}}}
\newcommand{\D}{\ensuremath{\mathcal{D}}}
\newcommand{\Aop}{\ensuremath{\mathcal{A}^{\mathrm{op}}}}
\newcommand{\Bop}{\ensuremath{\mathcal{B}^{\mathrm{op}}}}
\newcommand{\Cop}{\ensuremath{\mathcal{C}^{\mathrm{op}}}}
\newcommand{\SP}{\ensuremath{\mathrm{Sp}^O}}
\newcommand{\fun}[1]{\ensuremath{\mathrm{Fun}(#1, \SP)}}
\newcommand{\MOD}[2]{\ensuremath{\mathrm{Mod}(#1, #2)}}
\newcommand{\MODONE}[1]{\ensuremath{\mathrm{Mod}(#1)}}
\newcommand{\DM}[2]{\ensuremath{\mathrm{DerMod}(#1, #2)}}
\newcommand{\sph}{\ensuremath{\mathbb{S}}}
\newcommand{\tph}{\ensuremath{\mathbb{T}}}
\newcommand{\SWC}{\ensuremath{\mathscr{SW}_\C}}
\newcommand{\SWCop}{\ensuremath{\mathscr{SW}_{\Cop}}}
\newcommand{\SHCC}{\ensuremath{\mathscr{SHC}_\C}}
\newcommand{\SHCCop}{\ensuremath{\mathscr{SHC}_{\Cop}}}
\newcommand{\SHC}{\ensuremath{\mathscr{SHC}}}
\newcommand{\Scat}{\ensuremath{\mathcal{S}}}
\newcommand{\T}{\ensuremath{\mathcal{T}}}
\newcommand{\V}{\ensuremath{\mathcal{V}}}
\newcommand{\F}{\ensuremath{\mathcal{F}}}
\begin{document}

\setlength{\parindent}{0pt}

\author{Malte Lackmann}
\address{Mathematisches Institut\\
        Universit\"at Bonn}
\email{lackmann@math.uni-bonn.de}
\thanks{The author was supported by the ERC Advanced Grant "KL2MG-interactions" (no. 662400) of Wolfgang L\"uck.}

\subjclass[2010]{\msc[55M05, 55P42, 18D05]{55N91}}
\title[Homology representation theorems for diagram spaces]{External Spanier-Whitehead duality and homology representation theorems for diagram spaces}
\keywords{spaces over a category, $\C$-homology theories, representation theorems, external Spanier-Whitehead duality, Chern character}
\maketitle

\begin{abstract}
    We construct a Spanier-Whitehead type duality functor relating finite $\C$-spectra to finite $\Cop$-spectra and prove that every $\C$-homology theory is given by taking the homotopy groups of a balanced smash product with a fixed $\Cop$-spectrum. We use this to construct Chern characters for certain rational $\C$-homology theories.
\end{abstract}

\section{Introduction}
\label{intro}

Let $\C$ be a small category. A pointed $\C$-space, or diagram space over $\C$, is a functor 
\[X\colon \C\longrightarrow \mathrm{Top}_\ast\,.\]
The homotopy theory of diagram spaces is studied for various reasons, the perhaps most fundamental one being  Elmendorf's Theorem \cite{elmendorf} which identifies the homotopy theory of $G$-spaces for a discrete group $G$ with the homotopy theory of diagram spaces over the so-called orbit category $\mathrm{Or(G)}$. Similarly to classical homotopy theory, a major tool to study $\C$-spaces are $\C$-homology theories, which are collections of functors 
\[h_n^{\C}\colon \mathrm{Fun}(\C,\mathrm{Top}_\ast) \rightarrow \mathrm{Ab}\] satisfying the usual Eilenberg-Steenrod axioms, cf.\ Subsection~\ref{sec:homology}. Such theories can be constructed by setting
\[h_n^{\C}(X;E) = \pi_n(E\wedge_\C X)\]
where $E$ is a (cofibrant) $\Cop$-spectrum. This construction can be traced back to the very beginning of the theory of spectra in the case that $\C$ is the trivial category, and was first formulated by Davis and L\"uck \cite{davis-lueck} in this general form. It since has proved useful in many contexts, primarily in work on the Farrell-Jones conjecture \cites{bcs,BLR, BL-borel, wegner, KLR, rueping, wu, Atheo, bartels-bestvina}. However, the question whether every $\C$-homology theory arises in the way described above had not yet been addressed. This is answered in the positive by our first theorem, the homology representation theorem, proved as Theorem~\ref{representation}:

\begin{theoA}\label{intro:rep}
Suppose that $\C$ is countable.
Let $h_\ast^{\C}$ be any $\C$-homology theory. Then there is a $\Cop$-spectrum $E$ and a natural isomorphism
\begin{align}\label{eq:rep}
    h_\ast^{\C}(-) \cong h_\ast^{\C}(-;E)\,.
\end{align}
Moreover, every morphism of homology theories 
\[h_\ast^{\C}(-;E) \longrightarrow h_\ast^{\C}(-;E')\]
is induced by a morphism $E\longrightarrow E'$ in the derived category of $\Cop$-spectra.
\end{theoA}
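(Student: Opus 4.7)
The plan is to mimic the classical Brown--Adams representation theorems for (co)homology theories, transposed to the setting of $\C$-spectra via the family of ``free cells'' $\C(c,-)_+ \wedge S^n$, $c \in \C$ and $n \in \mathbb{Z}$. These objects play the role in the $\C$-equivariant world that ordinary sphere spectra play classically, and together they generate the stable homotopy category $\SHCC$ under (homotopy) colimits.

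First I would restrict to the full subcategory of \emph{finite} $\C$-spectra, namely those built from finitely many free cells. Here the external Spanier--Whitehead duality announced earlier in the paper produces a contravariant equivalence $D \colon \SWC \to (\SWCop)^{\mathrm{op}}$. Transporting $h_\ast^\C$ across $D$ turns it into a cohomology theory on finite $\Cop$-spectra, and a Brown-style representability theorem in the triangulated category $\SHCCop$ yields a $\Cop$-spectrum $E$ together with natural isomorphisms $h_n^\C(X) \cong \pi_n(E \wedge_\C X)$ for every finite $\C$-spectrum $X$. Using the co-Yoneda identity $E \wedge_\C \C(c,-)_+ \simeq E(c)$, one reads off that the homotopy groups of $E$ come out as $\pi_n E(c) \cong h_n^\C(\C(c,-)_+)$, and it is essentially this data (together with all of its functoriality in $c$) that pins down $E$.

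Second, I would extend the comparison from finite to arbitrary $X \in \mathrm{Fun}(\C,\mathrm{Top}_\ast)$. Both $h_\ast^\C(-)$ and $h_\ast^\C(-;E)$ are $\C$-homology theories which by construction agree on all free cells, hence on all finite $\C$-CW complexes. A skeletal induction, together with compatibility with filtered colimits (every $\C$-CW complex is a filtered colimit of its finite subcomplexes), upgrades the finite comparison to a natural isomorphism $h_\ast^\C(-) \cong h_\ast^\C(-;E)$ on the entire category of diagram spaces. The ``moreover'' clause on morphisms proceeds identically: a natural transformation $h_\ast^\C(-;E) \to h_\ast^\C(-;E')$ restricts on free cells to a compatible family of graded-group maps $\pi_\ast E(c) \to \pi_\ast E'(c)$, and a Brown-representability-for-morphisms argument in $\SHCCop$ lifts this family to the desired morphism in $\DM{\Cop}{\SP}$.

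The main obstacle I foresee is the Brown representability step in the $\Cop$-stable category with sufficient cardinality control: one needs $\{\C(c,-)_+\}_{c\in\C}$ to be a set of compact generators small enough to run the standard transfinite small-object and $\lim^1$-vanishing arguments. This is precisely where the countability hypothesis on $\C$ is essential, since it guarantees countably many generators and hence the vanishing of the relevant $\lim^1$ obstructions; without it one would risk phantom-type failures that obstruct either the existence of $E$ or the lifting of natural transformations to derived-category morphisms.
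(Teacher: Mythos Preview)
Your proposal is correct and follows essentially the same route as the paper: dualize the homological functor via the external Spanier--Whitehead equivalence $D\colon \SWC \to (\SWCop)^{\mathrm{op}}$, represent the resulting cohomological functor on finite $\Cop$-spectra by some $E\in\SHCCop$ via an Adams/Neeman-type Brown representability theorem, and extend from finite to arbitrary $\C$-CW-complexes by filtered colimits; the morphism statement comes from the corresponding ``representability for maps'' clause. The one point to sharpen is that the representability input needed is Neeman's theorem for homological functors on a \emph{countable triangulated category}, so countability of $\C$ must be upgraded to countability of $\SWCop$ (not merely countably many generating cells), which the paper verifies separately.
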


As for the well-known case $\C=\ast$, the analogous \emph{cohomological} version of this statement is considerably easier to prove, using a Yoneda lemma argument due to Brown \cite{brown}. Neeman \cite{neeman-book} has vastly generalised this argument to a triangulated category setup that is sufficient to treat the case of $\C$-cohomology theories. Specific references for the case of $\C$-spaces are \cites{barcenas, MT}. 

The classical strategy for deducing the homological Theorem~\ref{intro:rep} from the cohomological one is the following: Use Spanier-Whitehead duality to switch between cohomology and homology, and then use Adams' version of Brown's representability theorem to deal with the arising difficulty that the duality functor is only defined on finite spectra. The latter point poses no difficulties, since Adams' result was also generalised by Neeman \cite{neeman1997} in a form suitable for our applications. 

The first point is more difficult. The main innovation here is that the correct notion of duality is \emph{not} incorporated by a functor
\[D\colon \mathrm{Fun}(\C, \SP)^{\mathrm{op}} \longrightarrow \mathrm{Fun}(\C,\SP)\,,\]
but by a functor
\[D\colon \mathrm{Fun}(\C, \SP)^{\mathrm{op}} \longrightarrow \mathrm{Fun}(\Cop,\SP)\,.\]
This is the reason why we called it the external (Spanier-Whitehead) duality functor. Note that in the technical sense, the term "duality" is not justified: It refers to the canonical isomorphism
\begin{align} \label{DDX} DDX\cong X\end{align} 
for dualisable $X$. However, the two $D$'s here are not, as in the classical case, the same functor, but only formally given by the same construction, applied to $\C$ and $\Cop$. 

These two aspects originate from the fact that instead of classical duality theory, which takes place in a monoidal category, the correct framework for us is duality theory in a closed bicategory. This was first developped in \cite[Ch.~16]{may-sig}. We give a slightly simplified exposition in Section~\ref{sec:duality}. It is applied to a closed bicategory of spectrally enriched categories, derived bimodules and morphisms between these, constructed in Theorem~\ref{der}. With the correct setup at hand, the following statement, which is our Corollary~\ref{main:dual}, may be proved quite analogously to the classical case.

\begin{theoA}\label{intro:dual}
Every finite $\C$-CW-spectrum is dualisable. 
\end{theoA}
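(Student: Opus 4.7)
The plan is a cellular induction, combined with the standard closure properties of dualisable 1-morphisms in the closed bicategory of derived bimodules constructed in Theorem~\ref{der}. A finite $\C$-CW-spectrum is built from finitely many generating cells of the form $\Sigma^n \Sigma^\infty_+ \C(c,-)$, for $c \in \mathrm{Ob}(\C)$ and $n \in \mathbb{Z}$, via iterated mapping cones. It therefore suffices to establish two facts: every such generating cell is dualisable, and the subclass of dualisable $\C$-spectra is closed under suspension, desuspension, and the two-out-of-three property for cofiber sequences.

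Closure under (de)suspension is essentially formal, since $\sph^1$ is invertible in $\SP$. For cofiber sequences, the external duality functor $D$ produces cofiber sequences in $\mathrm{Fun}(\Cop,\SP)$, and a five-lemma applied to the long exact sequences obtained by smashing with an arbitrary $\Cop$-spectrum shows that if two of the three terms satisfy the defining isomorphism of dualisability, so does the third. Both closure statements should be formal consequences of the duality framework of Section~\ref{sec:duality}; in fact I would expect them to be verified there once and for all, in the generality of an arbitrary closed bicategory.

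The content of the theorem thus lies in the base case. Viewing $\C$-spectra as 1-morphisms $\ast \to \C$ in the closed bicategory, the natural candidate dual of $\Sigma^\infty_+ \C(c,-)$ is the 1-morphism $\Sigma^\infty_+ \C(-,c) \colon \C \to \ast$. The coevaluation $\sph \to \C(-,c) \wedge_\C \C(c,-)$ is the inclusion of $(\mathrm{id}_c, \mathrm{id}_c)$ at the summand $d=c$ of the coend, and the evaluation $\C(c,-) \boxtimes \C(-,c) \to \C(-,-)$, landing in the identity bimodule on $\C$, is composition in $\C$. The two triangle identities then reduce to the unit and associativity axioms of the spectral enrichment of $\C$, this being the bicategorical form of the enriched Yoneda lemma, witnessed concretely by the coend identity $\C(-,c) \wedge_\C Y \cong Y(c)$ for arbitrary $\C$-spectra $Y$.

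The main obstacle I foresee is the compatibility of this formal argument with derivation. Since the balanced smash product is homotopically meaningful only after cofibrant replacement, one must either verify that the representable spectra $\Sigma^\infty_+ \C(c,-)$ are already cofibrant in the model structure underlying the derived bicategory, so that the triangle identities hold on the nose, or else argue up to coherent homotopy in the derived bicategory, which is considerably more delicate. I would expect the paper to take the first route by arranging the model structure from Theorem~\ref{der} so that representable $\C$-spectra are cofibrant, which is standard in the free/projective model structures typically used in this setting.
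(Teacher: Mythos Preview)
Your proposal is correct and matches the paper's approach closely: the paper proves the closure of dualisable objects under (de)suspension and cofibers in Lemma~\ref{D:exact} (via exactly the five-lemma argument you describe), handles the representable base case in Lemma~\ref{dual_of_rep}, and then invokes Lemma~\ref{SW=SW}(c) to conclude. The only notable difference is in the verification of the base case: you propose to check the triangle identities (condition~(I) of Proposition~\ref{I-V}) directly, whereas the paper instead shows that $\varepsilon^1_\ast$ is a bijection (condition~(II)) by reducing both source and target to $[W, Z(?,a)]$ via the derived Yoneda Lemma~\ref{derYoneda}; your anticipated obstacle and its resolution (representables are cofibrant) are exactly how the paper handles the derivation issue.
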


For finite groups $G$, classical genuine $G$-representation theory takes into account the orthogonal representation theory of $G$. This is a very sophisticated and rich theory. Recently, this approach has been extended to proper equivariant homotopy theory for infinite discrete groups \cite{DHLPS}. We take a different route here, which uses no representation theory. 
We want to stress that for (finite or infinite) groups, our results are neither generalisations nor special cases of the genuine results. We refer the reader to Remark~\ref{genuine} for a more detailed discussion.

Our third main result concerns the case of \emph{rational} $\C$-homology theories. These come from contravariant functors from $\C$ to rational spectra, which are identified with rational chain complexes via the stable Dold-Kan correspondence. Note that at this point we face the problem of upgrading a (weak) monoidal Quillen equivalence between two categories of spectra to a Quillen equivalence between diagram spectra, suitably compatible with balanced smash products. This is a quite subtle issue, discussed in Section~\ref{sec:change}. If the chain complexes we get on the algebraic side split (functorially), this entails the existence of a Chern character, i.\,e.\ a decomposition of the rational $\C$-homology theory into a direct sum of shifted Bredon homology theories, cf.\ Definition~\ref{def:chern}. We can construct such a Chern character in two instances:

\begin{theoA}\label{intro:chern_flat} Let $\C$ be arbitrary
and assume that $h_\ast^{\C}$ is a rational $\C$-homology theory with the property that all coefficient systems $h_{t}^{\C}$ are flat as right $\C$-modules. Then there is a Chern character for $h_\ast^\C$. \end{theoA}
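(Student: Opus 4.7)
The plan is to combine the representing-spectrum point of view with the stable Dold-Kan correspondence and a collapse of the Künneth spectral sequence, with the flatness hypothesis being exactly what forces that collapse.

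First I would exploit the assumption that $h_\ast^\C$ is rational: by the discussion preceding the theorem, such a theory is represented by a rational $\Cop$-spectrum $E$, i.e.\ $h_\ast^\C(X)\cong\pi_\ast(E\wedge_\C X)$. Then, using the change-of-enrichment results of Section~\ref{sec:change} to upgrade the stable Dold-Kan Quillen equivalence to $\Cop$-diagrams in a manner compatible with balanced smash products, I would transport $E$ to a chain complex $C_E$ of right $\mathbb{Q}\C$-modules, yielding a natural identification
\[h_n^\C(X) \cong H_n\bigl(C_E \otimes^L_{\mathbb{Q}\C} C_\ast^{\mathrm{cell}}(X;\mathbb{Q})\bigr),\]
where $C_\ast^{\mathrm{cell}}(X;\mathbb{Q})$ is the rational cellular chain complex of a $\C$-CW replacement of $X$. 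Crucially, $H_t(C_E)\cong\pi_t E$ recovers the coefficient system $h_t^\C$, which by hypothesis is flat as a right $\mathbb{Q}\C$-module.

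Next I would invoke the hyperhomology (Künneth) spectral sequence
\[E^2_{p,q} = \bigoplus_{i+j=q}\mathrm{Tor}_p^{\mathbb{Q}\C}\bigl(H_i(C_E),\, H_j^{\mathrm{Br}}(X;\mathbb{Q})\bigr) \;\Longrightarrow\; h_{p+q}^\C(X),\]
whose existence follows from a flat resolution of $C_E$ and the usual double-complex construction. Flatness of each $H_i(C_E)$ wipes out the columns with $p\geq 1$, so the sequence collapses at $E^2$; since only the $p=0$ column survives there is a single filtration quotient at each total degree, and no extension problems arise. The resulting edge isomorphism, combined with the exactness of $H_i(C_E)\otimes_{\mathbb{Q}\C}-$, furnishes natural isomorphisms
\[h_n^\C(X) \;\cong\; \bigoplus_{i+j=n} H_i(C_E)\otimes_{\mathbb{Q}\C} H_j^{\mathrm{Br}}(X;\mathbb{Q}) \;\cong\; \bigoplus_i H_{n-i}^{\mathrm{Br}}\bigl(X;\,H_i(C_E)\bigr),\]
which is the Chern character decomposition of Definition~\ref{def:chern}.

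I expect the genuinely delicate step to be the Dold-Kan transfer in the first paragraph: upgrading a (weak) monoidal Quillen equivalence between rational spectra and chain complexes to an equivalence between their $\Cop$-diagram categories that preserves balanced smash products up to coherent quasi-isomorphism is non-formal, and is precisely the subject of Section~\ref{sec:change}. Once this is settled, the rest is essentially forced by standard homological algebra: the flatness of the coefficient systems guarantees the collapse of the Künneth spectral sequence, and both the absence of higher differentials and the triviality of the associated filtration make the resulting decomposition canonical and natural in $X$. Note that no splitting of the chain complex $C_E$ itself is needed — flatness is generally too weak to force formality of $C_E$, but it is exactly what is required to make the tensor-product spectral sequence degenerate.
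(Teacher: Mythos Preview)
Your proposal is essentially the same as the paper's proof: represent the rational theory by a chain complex of right $\mathbb{Q}\C$-modules via the stable Dold--Kan correspondence, then collapse the K\"unneth spectral sequence using the flatness of the coefficient systems. The paper phrases the $E^2$-term as $\mathrm{Tor}^R_p(H_s(E),H_t(\overline{X}))$ with $\overline{X}$ a cofibrant replacement of the normalised singular chains (rather than the cellular chains you use, and with $\overline{X}$ playing the role of the degreewise-flat factor rather than a flat resolution of $E$), but this is only a cosmetic difference.

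One point you gloss over that the paper handles explicitly: the classical K\"unneth spectral sequence is stated for bounded-below complexes, while the representing complex $E$ need not be bounded. The paper treats this by applying the argument to the truncations $\tau_k E$, checking naturality in $k$, and passing to the colimit; you should be aware that some such step is needed for convergence.
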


\begin{theoA} \label{intro:chern_hereditary} Suppose that $\C=\mathrm{Or}(G,\F)$ where $G$ is finite and all members of the family $\F$ are cyclic of prime power order. Then a Chern character exists for \emph{every} $\C$-homology and cohomology theory.
\end{theoA}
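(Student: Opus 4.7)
The plan is to use the homology representation Theorem~\ref{intro:rep} to reduce the existence of a Chern character to a formality statement in a derived category of modules, and then to exploit the specific structure of $\mathrm{Or}(G,\F)$ under the cyclic prime power hypothesis.

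First, any rational $\C$-homology theory is by Theorem~\ref{intro:rep} of the form $\pi_\ast(E \wedge_\C -)$ for a rational $\Cop$-spectrum $E$. Under the stable Dold--Kan correspondence of Section~\ref{sec:change}, $E$ corresponds to a chain complex $C_\ast$ of $\mathbb{Q}$-linear $\Cop$-modules whose homology modules are precisely the coefficient systems $h_t^{\C}$. A Chern character in the sense of Definition~\ref{def:chern} is then the same data as a splitting $C_\ast \simeq \bigoplus_t h_t^{\C}[t]$ in the derived category of $\mathbb{Q}$-linear $\Cop$-modules. The cohomological case is treated dually, starting from the (easier) Brown--Neeman cohomological representation theorem.

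The problem thus reduces to showing that every object in the derived category of $\mathbb{Q}$-linear $\Cop$-modules is formal, i.e., quasi-isomorphic to the direct sum of its shifted homology. A sufficient condition is that the $\mathbb{Q}$-linear category $\mathbb{Q}[\mathrm{Or}(G,\F)]$ has global dimension at most one; in fact I expect the stronger statement that it is semisimple. For this I would invoke the classical idempotent splittings of the rational Burnside ring going back to tom~Dieck and Dress: for $G$ finite and $\F$ consisting of cyclic subgroups of prime power order, the rational category algebra should be Morita equivalent to a finite product of group algebras $\mathbb{Q}[N_G(H)/H]$ indexed by $\F$-conjugacy classes. Since each Weyl group $N_G(H)/H$ is finite, Maschke's theorem then yields the desired semisimplicity, and the formality of any complex $C_\ast$ of modules is automatic.

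The main obstacle is precisely this structural decomposition. The prime-power cyclic hypothesis enters exactly here: it ensures that the relevant primitive idempotents in the rational Burnside ring are already defined over $\mathbb{Q}$ and remain orthogonal after passing from the Burnside ring to the category algebra, both properties that fail for more general families. Concretely, I would verify this by computing the morphism spaces $\mathbb{Q}[(G/K)^H]$ for $H, K \in \F$ and constructing the block-orthogonal idempotents realising the claimed Morita equivalence. Once this has been done, both Theorems~\ref{intro:chern_hereditary} in its homological and cohomological form follow at once.
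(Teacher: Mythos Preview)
Your reduction is exactly right and matches the paper: represent the theory by a spectrum, pass through stable Dold--Kan to a chain complex of $\mathbb{Q}\Cop$-modules, and observe that a Chern character is the same as formality of that complex in the derived category. You also correctly identify that global dimension $\le 1$ (hereditarity of $\mathbb{Q}\C$) is enough, since over a hereditary ring every complex splits as the sum of its shifted homology.

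The gap is in the next step. Your expectation that $\mathbb{Q}[\mathrm{Or}(G,\F)]$ is \emph{semisimple} is false already for $G=\mathbb{Z}/p$ with $\F=\{1,G\}$. The constant functor $\underline{\mathbb{Q}}$ has the subfunctor supported on $G/G$, and the resulting short exact sequence does not split: any candidate section is forced to be zero by naturality along the unique map $G/1\to G/G$. More conceptually, the tom~Dieck/Dress idempotent decomposition you invoke lives in the rational Burnside or Mackey world, where transfers are present; the orbit category has only restrictions, so those idempotents do not act on $\mathbb{Q}[\mathrm{Or}(G,\F)]$-modules and there is no Morita equivalence to $\prod_H \mathbb{Q}[W_GH]$. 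In short, the structural decomposition you are aiming for does not exist in this category.

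What the paper does instead is stay with the weaker (and true) statement that $\mathbb{Q}\C$ is hereditary. This is obtained from a result of Li: for a finite EI category $\C$, the algebra $\mathbb{Q}\C$ is hereditary if and only if $\C$ has the \emph{unique factorisation property} (every morphism factors into unfactorisables essentially uniquely). The paper then verifies combinatorially that $\mathrm{Or}(G,\F)$ has the UFP precisely when every member of $\F$ is cyclic of prime power order. So your first sufficient condition was the right target; only the method of reaching it needs to change.
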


Theorem~\ref{intro:chern_flat}, which we prove as Corollary~\ref{chern}, is similar to a theorem of L\"uck \cite{lueck-chern}, cf.\ Remark~\ref{compare-lueck}. It may be applied to $G$-homology theories whose coefficient systems have Mackey extensions, cf.\ Subsection~\ref{mackey}. Theorem~\ref{intro:chern_hereditary}, proved as Corollary~\ref{split-UFP} and Proposition~\ref{orbit-UFP}, uses the results of \cite{li} on hereditary category algebras. Actually, as we prove in Proposition~\ref{split-her}, every $\C$-homology theory possesses a Chern character if and only if the category algebra $\mathbb{Q}\C$, cf.\ Definition~\ref{def:cat-algebra}, is hereditary.

\subsection*{Further directions} Our results suggest further questions that we find interesting to study. The first refers to the notion of an \emph{equivariant} homology theory, described in \cite[Sec.~6]{bcs}. This consists of homology theories for all groups at the same time, linked by various induction isomorphisms. These can also be constructed from suitable diagram spectra, for instance  spectra over the category of small groupoids.

\begin{introquestion}
Is there a representation theorem for equivariant homology theories, i.\,e.\ does every equivariant homology theory come from a suitable diagram spectrum?
\end{introquestion}

We want to note that all common examples of equivariant homology theories are constructed using groupoid spectra,  \emph{except} equivariant bordism \cite[Ex.~1.4]{lueck-chern} where such a representation is not known to us.

\begin{introquestion}
Can Theorem~\ref{intro:rep} be generalised to categories enriched in topological spaces?
\end{introquestion}

Most of our results can be generalised to topological or even spectral categories satisfying a certain condition (C), cf.\ p.~\pageref{C-def}, but this question refers to the case where (C) does not hold, so that the theory can certainly not be built up in its whole generality.

The next question refers to the fact that in the non-equivariant stable category \SHC, the dualisable objects are exactly the finite CW-spectra, and these are also exactly the compact objects (in the sense that mapping out of them up to homotopy commutes with direct sums).

\begin{introquestion}
What is the relation between compact, dualisable and finite objects in the derived category of $\C$-spectra?
\end{introquestion}

The last two questions refer to the case of rational $\C$-homology theories.

\begin{introquestion}
Find conditions under which the flatness assumption of Theorem~\ref{intro:chern_flat} is satisfied.
\end{introquestion}

\begin{introquestion}
Can it be characterised for infinite EI categories $\C$ when the category algebra $\mathbb{Q}\C$ is hereditary?
\end{introquestion}

This question is the subject of joint work in progress of the author with Liping Li.

\subsection*{Organisation of the paper} 
\begin{itemize}
    \item Section~\ref{sec:ho-fun-c} recalls some background from homotopy theory and constructs the closed bicategory of spectrally enriched categories.
    \item Section~\ref{sec:change} discusses what happens if orthogonal spectra are replaced by another model category of spectra as the target category of our diagram spectra. 
    \item Section~\ref{sec:bicategorical} develops external duality theory in closed bicategories and applies this to $\C$-spectra, proving Theorem \ref{intro:dual}. 
    \item Section~\ref{sec:homology} proves Theorem \ref{intro:rep} via the route sketched above.
    \item Section~\ref{sec:rational} studies the rational case and proves Theorems~\ref{intro:chern_flat} and \ref{intro:chern_hereditary}.
\end{itemize}

\subsection*{Acknowledgements} This paper was written during my time as a PhD student of  Wolfgang L\"uck. I was supported by the ERC Advanced Grant "KL2MG-interactions" (no. 662400). I thank Bertram Arnold, Daniel Br\"ugmann, Emmanuele Dotto, Markus Hausmann, Fabian Henneke, Liping Li, Irakli Patchkoria, Jens Reinhold, and the members of the rubber duck seminar at Bonn for helpful discussions. Special thanks go to Benjamin B\"ohme who has carefully proofread this paper, and to my office mate Christian Wimmer who patiently explained to me the foundations of orthogonal spectra.

\section{The closed bicategory $\mathrm{DerMod}(\SP)$}
\label{sec:ho-fun-c}

The correct setup for developping external duality theory, as is done in Section~\ref{sec:duality}, is given by the notion of a closed bicategory. This will then be applied to deduce results about $\C$-spectra. In this first section, we will introduce the actors, i.\,e.\ recall the basics about model structures on the category $\mathrm{Fun}(\C, \SP)$ of $\C$-spectra in Subsection~\ref{sec:basics}, introduce the notion of a closed bicategory in Subsection~\ref{sec:mod} and show how $\mathrm{Fun}(\C, \SP)$ can be endowed with this structure, cf.\ Proposition~\ref{cosmos-mod}, and then show that we can preserve this structure when passing to the homotopy category in Subsection~\ref{sec:dermod}, especially Theorem~\ref{der}. Our closed bicategories will, in the underived, resp. derived case, consist of small spectrally enriched categories (with cofibrant mapping objects), (derived) bimodules over these and morphisms (in the homotopy category) of bimodules.

\subsection{Recapitulations about the homotopy category of $\C$-spectra}\label{sec:basics}

Let $\SP$ denote the category of orthogonal spectra with the stable model structure, as discussed in \cite{MMSS}, and let $\C$ be a small category enriched in $\SP$. Let $\fun{\C}$ denote the category of enriched functors from $\C$ to $\SP$ and enriched natural transformations \cite[Def.~6.2.4]{borceux2}. Prominent objects of this category are the representable functors
\[\underline{c}=\C(c,?)\]
for $c\in \mathrm{Ob}(\C)$, or more generally $X\wedge \underline{c}$ for some spectrum $X$, where the smash product is meant objectwise.

We want to endow $\fun{\C}$ with a model structure in which the fibrations and weak equivalences are given by the objectwise fibrations and weak equivalences. This determines the model structure, if it exists, uniquely, justifying that we call it 'the' projective model structure. We want that:
\begin{itemize}
    \item The projective model structure exists.
    \item It is a cofibrantly generated model structure. A class of generating cofibrations is given by morphisms of the form $X\wedge \underline{c}\rightarrow Y\wedge \underline{c}$, where $X\rightarrow Y$ runs through a class of generating cofibrations of $\SP$ and $c$ through the objects of $\C$; a class of generating trivial cofibrations is described similarly. 
    \item A cofibration in the projective model structure is objectwise a cofibration.
\end{itemize}

For usual $\mathrm{Set}$-enriched categories $\C$, this is folklore since $\SP$ is a cofibrantly generated model category \cite[Thm.~11.6.1, Prop.~11.6.3]{Hirschhorn}. For spectrally enriched $\C$, the situation is more subtle. The most general reference we could find is \cite{shulman:homotopy_limits}. 

\begin{citedthm}[{\cite[Thm.~24.4]{shulman:homotopy_limits}}] \label{spectral_diagrams} Suppose that $\C$ satisfies 
\begin{itemize}
    \item[(C)] The mapping spectra $\C(c,d)$ are cofibrant for all $c,d \in \C$.
\end{itemize}
Then all three items above are satisfied. 
\end{citedthm}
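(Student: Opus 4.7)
The plan is to apply Kan's recognition theorem for cofibrantly generated model structures. For each $c \in \mathrm{Ob}(\C)$, evaluation $\mathrm{ev}_c \colon \fun{\C} \to \SP$ has an enriched left adjoint $F_c$ given by $F_c(X) = X \wedge \underline{c}$. Taking $I_\C := \{F_c(i) : i \in I,\ c \in \mathrm{Ob}(\C)\}$ and $J_\C := \{F_c(j) : j \in J,\ c \in \mathrm{Ob}(\C)\}$ as candidate generating (trivial) cofibrations, where $(I,J)$ generates the stable model structure on $\SP$, the adjunctions translate the right-lifting conditions against $F_c(i)$ and $F_c(j)$ into the right-lifting conditions against $i$ and $j$ after evaluation at $c$. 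Since fibrations and weak equivalences in $\fun{\C}$ are defined objectwise, this identifies the $I_\C$-injectives with the objectwise trivial fibrations and the $J_\C$-injectives with the objectwise fibrations, so the compatibility condition ($I_\C$-injectives equal $J_\C$-injectives intersected with weak equivalences) follows at once from its counterpart in $\SP$.

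Smallness of the domains in $I_\C \cup J_\C$ is inherited from smallness in $\SP$, since colimits in $\fun{\C}$ are computed objectwise and each $F_c$ is a left adjoint. The one substantive verification is that every relative $J_\C$-cell complex is a weak equivalence. For a single pushout of a map $F_c(j)$ with $j \in J$, evaluation at an arbitrary $d \in \mathrm{Ob}(\C)$ yields in $\SP$ a pushout along the map $j \wedge \mathrm{id}_{\C(c,d)}$. This is where hypothesis (C) enters decisively: the pushout-product axiom for the monoidal model structure on $\SP$ (see \cite{MMSS}) guarantees that smashing with a cofibrant spectrum sends trivial cofibrations to trivial cofibrations. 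Hence each evaluation of the pushout is obtained by cobase change along a trivial cofibration; transfinite composition and retracts preserve weak equivalences of this form, completing the verification of Kan's hypotheses. The third claim, that projective cofibrations are objectwise cofibrations, follows by the same reasoning: each $F_c(i)$ evaluated at $d$ is $i \wedge \C(c,d)$, which is a cofibration by cofibrancy of $\C(c,d)$, and the class of objectwise cofibrations is closed under pushouts, transfinite composition, and retracts, hence contains all cofibrations generated by $I_\C$.

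The only place where (C) is genuinely needed is the acyclicity step, which is therefore also the main obstacle. Without cofibrancy of the mapping spectra, smashing with $\C(c,d)$ need not preserve trivial cofibrations (or even weak equivalences between cofibrant spectra), so the generating trivial cofibrations $F_c(j)$ could evaluate to maps that are not weak equivalences, derailing the construction at its foundation. Once (C) is assumed, the remaining steps are formal adjunction and colimit manipulations.
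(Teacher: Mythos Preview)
Your argument is correct and complete. The paper does not supply its own proof; it simply cites Shulman's theorem and remarks that Shulman's argument uses the \emph{monoid axiom} for $\SP$. Your route is slightly different: for the acyclicity of $J_\C$-cell complexes you invoke the pushout-product axiom together with hypothesis (C) to see that $j \wedge \C(c,d)$ is a trivial cofibration, whereas Shulman's argument uses the monoid axiom, which guarantees that maps of the form $j \wedge A$ for \emph{arbitrary} $A$ generate weak equivalences under cobase change and transfinite composition. Both approaches reach the same conclusion under (C), and yours is arguably more elementary in that it avoids the monoid axiom entirely.

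One small overclaim: your final paragraph asserts that without (C) the construction is ``derailed at its foundation.'' In fact, because $\SP$ satisfies the monoid axiom, the projective model structure on $\fun{\C}$ exists for \emph{any} spectrally enriched $\C$, cofibrant hom-spectra or not; the acyclicity step goes through via the monoid axiom rather than the pushout-product axiom. Condition (C) is genuinely essential only for the third bullet (projective cofibrations are objectwise cofibrations), which is also where the paper actually needs it later, for instance in the proof of Theorem~\ref{der}. So your proof is sound, but the diagnosis of where (C) is indispensable should be adjusted.
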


Shulman's theorem uses the fact that $\SP$ satisfies the monoid axiom, as is proved in \cite[Thm.~12.1(iii)]{MMSS}. Because of the above theorem,
\begin{quote}\label{C-def}
    \emph{we assume from now on that our category $\C$ satisfies (C)}.
\end{quote}

We denote \[\SHCC=\mathrm{Ho}(\mathrm{Fun}(\C,\SP))\] and use square brackets to indicate that we are talking about morphisms in the homotopy category:
\[[X,Y]_\C := \mathrm{Hom}_{\mathrm{Ho}(\fun{\C})}(X,Y)=\mathrm{Hom}_{\SHCC}(X,Y)\,.\]

\fun{\C} is a stable model category, so the homotopy category admits a preferred \emph{triangulated structure}, even in the strong sense of  \cite[Sec.~7]{hovey}. We refer to the fact that 
\begin{align}\label{distinguished}X\xrightarrow{f} Y \longrightarrow Z\rightarrow \Sigma X\end{align}
is a distinguished triangle sloppily as $Z=C(f)$. Note that this notion makes sense already in the pointed model category of pointed $\C$-spaces \cite[Sec.~6]{hovey}. If $f$ is a cofibration between cofibrant objects, then $C(f)=Y/X$.

It is a well-known fact about triangulated categories that a distinguished triangle~\eqref{distinguished} induces a long exact sequence
\begin{align}\label{homological_sequence}
\ldots\longrightarrow[\Sigma Y,B]_\C \longrightarrow [\Sigma X, B]_\C \longrightarrow [Cf,B]_\C \longrightarrow [Y,B]_\C
\longrightarrow [X,B]_\C \longrightarrow \ldots\end{align}
and similarly for $[B,-]_\C$.

A triangulated subcategory of $\SHCC$ is a full subcategory closed under $\Sigma$ and $\Sigma^{-1}$ with the property that if it contains a morphism $f\colon X\rightarrow Y$, then also its cone $Cf$.

Recall from \cite{MMSS} that $\SP$ is inhabited by various spheres $F_k\mathrm{S}^n$ with $F_0\mathrm{S}^0=\sph$ and $F_k(X\wedge Y) = (F_kX)\wedge Y$. In the homotopy category, $F_k\mathrm{S}^n$  becomes a $k$-fold desuspension of $\mathrm{S}^n$. The canonical maps $F_k(\mathrm{S}^n_+)\rightarrow F_k(\mathrm{D}^{n+1}_+)$, with $k\in \mathbb{Z}$ and $n\in \mathbb{N}$, define a class of generating cofibrations in $\SP$. 
A class of generating cofibrations in $\mathrm{Fun}(\C,\SP)$ is thus given by $F_k(\mathrm{S}^n_+)\wedge\underline{c}\rightarrow F_k(\mathrm{D}^{n+1}_+)\wedge\underline{c}$ for $k\in\mathbb{Z}$, $n\in\mathbb{N}$ and $c\in\mathrm{Ob}(\C)$. We will call an object of $\SHCC$ a \emph{finite $\C$-CW-spectrum} if it can be obtained from the trivial functor $\ast$ by a finite number of gluing steps using these generating cofibrations. The $\C$-Spanier-Whitehead category $\SWC$ is the full subcategory of $\SHCC$ on the finite $\C$-CW-spectra. 

The name is justified by the following lemma:

\begin{lemma}\label{SW=SW} (a) $\SWC$ is the full subcategory of $\SHCC$ on objects of the form $\Sigma^{N}\Sigma^{\infty}A$ for some integer $N$ and some finite pointed $\C$-CW-complex $A$.\\
(b) If $A$ is a finite $\C$-CW-complex and $B$ is an arbitrary $\C$-CW-complex, then 
\[\mathrm{Hom}_{\SWC}(\Sigma^N\Sigma^{\infty}A, \Sigma^M\Sigma^{\infty} B) \cong \mathrm{colim}_k \left \{\Sigma^{N+k}A, \Sigma^{M+k}B\right\}_\C\,,\]
where the curly brackets on the right denote (unstable) homotopy classes of maps of $\C$-spaces.
(c) $\SWC$ is the smallest triangulated subcategory of $\SHCC$ containing the objects $\underline{c}$ for all $c\in\mathrm{Ob}(\C)$.\end{lemma}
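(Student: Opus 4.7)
My plan is to prove the three parts in the order (b), (a), (c), since (b) provides the key identification of morphisms that drives the other two. For (b), I first reduce to the case $N = M = 0$ by absorbing the shifts into $A$ or $B$: since $\Sigma$ is an equivalence on $\SHCC$, one can freely replace $A$ by $\Sigma^p A$ or $B$ by $\Sigma^q B$ on both sides and reindex the colimit. It then suffices to identify $[\Sigma^\infty A,\Sigma^\infty B]_\C$ with $\mathrm{colim}_k\{\Sigma^k A,\Sigma^k B\}_\C$. Computing the left side via fibrant replacement gives $[\Sigma^\infty A,\Sigma^\infty B]_\C = \pi_0\mathrm{Map}_{\fun{\C}}(\Sigma^\infty A, R\Sigma^\infty B)$, where $R$ denotes a stable fibrant replacement (and $\Sigma^\infty A$ is already cofibrant). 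The Quillen adjunction between pointed $\C$-spaces and $\C$-spectra rewrites this as unstable pointed homotopy classes of maps from $A$ into $\Omega^\infty R\Sigma^\infty B$, and the latter is identified with the spectrification $\mathrm{hocolim}_k\Omega^k\Sigma^k B$. Since $A$ is a finite pointed $\C$-CW-complex, it is compact, so $\pi_0\mathrm{Map}(A,-)$ commutes with this sequential homotopy colimit, yielding the desired $\mathrm{colim}_k\{\Sigma^k A,\Sigma^k B\}_\C$.

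For (a), the easy direction is that $\Sigma^N\Sigma^\infty A$ with $A$ a finite pointed $\C$-CW-complex lies in $\SWC$, built from the generating cells $F_0(S^n_+)\wedge\underline{c}\to F_0(D^{n+1}_+)\wedge\underline{c}$ (for $N\geq 0$) or from $F_{|N|}$-cells (for $N<0$). For the converse, induct on the number of cells in a finite $\C$-CW-spectrum $X$. In the inductive step, $X$ is the cofiber of an attaching map $\varphi\colon F_k(S^n_+)\wedge\underline{c}\to X'$ with $X'\simeq\Sigma^{N'}\Sigma^\infty A'$ by hypothesis. Using the isomorphism $F_k(S^n_+)\wedge\underline{c}\simeq\Sigma^{-k}\Sigma^\infty(S^n_+\wedge\underline{c})$ in $\SHCC$, view $\varphi$, after sufficient suspension, as a morphism in $\SWC$ between suspension spectra of finite pointed $\C$-CW-complexes. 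By (b), it is represented by an honest unstable map after further suspension. The unstable cofiber is again a pointed $\C$-CW-complex, its suspension spectrum computes the stable cofiber, and desuspending recovers $X$ in the claimed form.

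Part (c) then follows from (a) and (b): $\SWC$ contains each $\underline{c}=\Sigma^0\Sigma^\infty\underline{c}$, is closed under $\Sigma^{\pm 1}$ (by shifting $N$), and is closed under mapping cones because (b) represents any morphism by an unstable map whose unstable cofiber is again a pointed $\C$-CW-complex. Conversely, any triangulated subcategory of $\SHCC$ containing all $\underline{c}$'s is closed under shifts and cones, so it inductively contains every finite $\C$-CW-spectrum, using that the attaching map in the inductive construction is a map from a finite wedge of suspensions of copies of $\underline{c}$. The main technical obstacle is (b): identifying $\Omega^\infty R\Sigma^\infty B$ with $\mathrm{hocolim}_k\Omega^k\Sigma^k B$ compatibly with the spectral enrichment on $\C$, and verifying the compactness of finite pointed $\C$-CW-complexes. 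Since Shulman's projective model structure allows fibrant replacement to be performed levelwise, the standard spectrification formula for orthogonal spectra should adapt to the $\C$-enriched case without essential modification.
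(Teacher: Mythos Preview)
Your proposal is correct, but the route for (b) differs from the paper's. The paper argues by induction on the finite complex $A$: it reduces to $A=\underline{c}$ (where both sides become morphisms into the spectrum $B(c)$, via the derived Yoneda lemma~\ref{derYoneda} and the classical statement for $\SHC$) and then passes to cones via the five lemma, after checking that the right-hand side $\mathrm{colim}_k\{\Sigma^k(-),\Sigma^k B\}_\C$ sends cofibre sequences to long exact sequences. Your spectrification-plus-compactness argument is more direct and conceptually cleaner, and avoids the paper's forward references to Theorem~\ref{der}(c) and Lemma~\ref{derYoneda}; on the other hand it carries exactly the technical burden you flag, namely making $\Omega^\infty R\Sigma^\infty B\simeq\mathrm{hocolim}_k\Omega^k\Sigma^k B$ precise in the enriched setting (this does go through since fibrancy is objectwise). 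The paper also orders the parts as (a), (c), (b) and treats (a) as a self-contained induction rather than invoking (b). One detail the paper makes explicit in (c) that you gloss over: the attaching source is $F_k(S^n_+)\wedge\underline{c}$, and $D^{n+1}_+$ is \emph{not} the cone of $S^n_+$; the paper resolves this basepoint issue after one suspension. Your phrase ``a finite wedge of suspensions of copies of $\underline{c}$'' is ultimately correct since $S^n_+\simeq S^n\vee S^0$ stably, but it is worth spelling out.
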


Note that statement (b) serves as an alternative definition of $\SWC$, not using $\SHCC$.

\begin{proof} Part (a) is an easy induction. In part (c), the fact that $\SWC$ is triangulated is clear as well. For the minimality, note that this would be clear inductively if we had defined finite $\C$-CW-spectra using attaching maps $F_k(\mathrm{S}^n)\wedge\underline{c}\rightarrow F_k(\mathrm{D}^{n+1})\wedge\underline{c}$ since $\mathrm{D}^{n+1}=C(\mathrm{S}^n)$. Unfortunately, $\mathrm{D}^{n+1}_+$ is \emph{not} the cone of $\mathrm{S}^n_+$. However, in the homotopy category, we may suspend as often as we want since this is an isomorphism. After one suspension, the basepoint problem vanishes: the inclusion $\mathrm{S}^{n}_+\rightarrow \mathrm{D}^{n+1}_+$ becomes the inclusion of the boundary $B$ of an $(n+2)$-disk $D$ with two boundary points identified (to the basepoint). The cone of the quotient map $\mathrm{S}^{n+1}\rightarrow B$ can be identified with $D$. 

For part (b), note that it suffices to prove this statement for $\Sigma A$ and $\Sigma B$. Fix $B$. As in the proof of (c), we only need to show that it holds true for all $\underline{c}$ and if it is true for $A$ and $A'$, and if $f\colon A\rightarrow A'$ is a morphism, then it is true for $Cf$. For corepresentable functors $\underline{c}$, the statement boils down to the well-known corresponding statement for $\SHC$. Use Theorem~\ref{der} (c) and Lemma~\ref{derYoneda} below to deal with the left-hand side. For the cone argument, first prove that the right-hand side functor (for fixed $B$) turns cofibre sequences into long exact sequences, similarly to~\eqref{homological_sequence}.  There is a natural map from the right-hand side to the left-hand side which is compatible with these two cone long exact sequences, and thus the claim follows via induction and the five lemma.
\end{proof}

\begin{rema}
The paper \cite{SSTopology} shows that (if spectra are simplicial symmetric spectra) the model categories $\mathrm{Fun}(\C,\mathrm{Sp}^{\Sigma}_{sSet})$ are exactly the simplicial, cofibrantly generated, proper, stable model categories with a set of compact generators.
\end{rema}

\begin{rema}\label{spectral_elm}
If $\C=\mathrm{Or}(G)$ is the orbit category of a  group $G$, then Marc Stephan \cite{marc_stephan} has shown that Elmendorf's Theorem holds in orthogonal spectra, i.\,e.\ there is a model structure on naive orthogonal $G$-spectra ($G$-objects in the category of orthogonal spectra) and a Quillen equivalence between this model category and $\mathrm{Fun}(\C,\SP)$. However, this may fail in other categories of spectra with the properties discussed in Subsection~\ref{spectra} below. For instance, it definitely fails in $\mathrm{Ch}_{\mathbb{Q}}$. The reason is that Stephan's paper has a cellularity condition that is satisfied by $\SP$, but not by $\mathrm{Ch}_{\mathbb{Q}}$. 
We will not use the spectral Elmendorf Theorem in this paper. \end{rema}

\begin{rema} \label{genuine} As promised in the introduction, we want to compare our approach to the classical one of classical genuine $G$-equivariant homotopy theory. Surveys on this topic are \cite{alaska}, \cite[Ch.~3]{schwede-global} and \cite[Sec.~2,3, App.~A,B]{hhr}. In this context, $G$ is a finite (or compact Lie) group, and usually not $\mathbb{Z}$-graded, but so-called $RO(G)$-graded {(co-)homology} theories are considered and this leads to a stable category in which not only $\mathrm{S}^1$, but all representation spheres $\mathrm{S}^V$ are invertible with respect to the smash product, where $V$ runs through all finite subrepresentations of a so-called universe $\mathscr{U}$. Using Remark~\ref{spectral_elm} above, one sees that (for $\SP$ as the category of spectra) we invert subrepresentations of the trivial universe $\mathbb{R}^{\infty}$, an approach sometimes called naive equivariant stable homotopy theory in the genuine context.

This framework in all its generality breaks down when $G$ becomes an infinite group. Recently, the authors of \cite{DHLPS} developped a generalisation for infinite (or non-compact Lie) groups $G$ with respect to the family of finite (or compact) subgroups. In their setup, smashing with all Thom spaces $S^{\xi}$, with $\xi$ a $G$-vector bundle over $\underline{E}G$, is inverted. Thus, this gives a different setup than the one we treat here, and in particular does not relate to the Davis-L\"uck construction of homology theories occuring in our homology representation theorem~\ref{representation}. Also, our theory is more general in that it treats diagram spaces over arbitrary countable categories $\C$.
\end{rema}

\subsection{$\wedge_\C$ and $\mathrm{map}_\C$}\label{sec:mod}

From now on, the letters $\A, \B$ and $\D$ also refer to spectrally enriched small categories satisfying (C). The spectrally enriched category $\A\wedge \Bop$ has objects $\mathrm{Ob}(\A)\times \mathrm{Ob}(\B)$ and
\[(\A\wedge\Bop) ((a,b),(a',b'))= \A(a,a')\wedge \B(b',b)\,.\]
An $(\A,\B)$-bimodule is a continuous functor $\A\wedge \Bop\rightarrow \SP$. The category of $(\A,\B)$-bimodules and $(\A,\B)$-linear morphisms (i.\,e.\ natural transformations of enriched functors) we denote by \[\MOD{\A}{\B}=\fun{\A \wedge \Bop}\,,\] with Hom sets denoted by $\mathrm{Hom}_{(\A,\B)}(-,-)$ and homotopy sets (i.\,e.\ Hom sets in the homotopy category of $(\A\wedge\Bop)$-spectra) denoted by $[-,-]_{(\A,\B)}$. $(\C,\ast)$- and $(\ast,\C)$-bimodules are just called left, respectively right $\C$-modules.

If $X$ is a right and $Y$ a left $\C$-module, then $X\wedge_{\mathcal{C}}Y$ is the spectrum

\[\mathrm{coequ}\left(\bigvee_{(c,d)\in \mathrm{Ob}(\mathcal{C})^2} Y(c) \wedge \C(c,d) \wedge X(d) \rightrightarrows \bigvee_{c\in \mathrm{Ob}(\mathcal{C})} Y(c)\wedge X(c) \right)\,. \]

Here, the upper arrow is defined on any $(c,d)$-summand via the morphism corresponding to
\[X^{\ast}\colon \C(c,d)\rightarrow \mathrm{map}(X(d),X(c))\]
under the adjunction between $-\wedge X(d)$ and $\mathrm{map}(X(d),-)$. The lower arrow is defined similarly, using $Y$ instead of $X$.

More generally, the balanced smash product $X\wedge_{\B} Y$ of an $(\A,\B)$-bimodule $X$ and a $(\B,\C)$-bimodule $Y$ is the $(\A,\C)$-bimodule defined by
\[X\wedge_\B Y(a,c)=X(a,?)\wedge_\B Y(?,c)\,.\]

Similarly, the mapping spectrum $\mathrm{map}_{\Cop}(U,X)$ between two right $\C$-modules $U$ and $X$ is defined as 
\[\mathrm{equ}\left( \prod\limits_{c\in \mathrm{Ob}(\mathcal{C})} \mathrm{map}(U(c),X(c)) \rightrightarrows \prod\limits_{(c,d)\in \mathrm{Ob}(\mathcal{C})^2} \mathrm{map}\left(\C(c,d),\mathrm{map}(U(d),X(c))\right)\right)\,.\]
More generally, for an $(\A,\B)$-bimodule $X$ and a $(\C,\B)$-bimodule $U$, we have an $(\A,\C)$-bimodule $\mathrm{map}_{\Bop}(U,X)$. We can similarly define the mapping spectrum between two left $\C$-modules, or between an $(\A,\B)$-bimodule and an $(\A,\C)$-bimodule. We also introduce the $(\A,\A)$-bimodule $\A$ defined by
\[(a,a')\mapsto \A(a',a)\,,\]
this not being a tautology, but referring to the mapping spectra of the category $\A$.

The constructions just introduced can not only be defined in $\SP$, but in any cosmos $\V$. They are linked in various ways that can be subsumed using the notion of a closed bicategory. Recall that a \emph{bicategory} $\mathscr{A}$ consists of a class of objects $\mathrm{Ob}(\mathscr{A})$, and a small category of $1$-morphisms $\mathscr{A}(A,B)$ between any two objects $A$ and $B$, together with composition functors that are associative and have units up to coherent isomorphisms \cite[Def.~7.7.1]{borceux1}. The morphisms between the $1$-morphisms are called $2$-morphisms. A bicategory is called \emph{closed} \cite[Def.~16.3.1]{may-sig} if for every $1$-morphism $f\colon A\rightarrow B$ and every object $C$, the precomposition with $f$, $f^\ast\colon \mathscr{A}(B,C)\longrightarrow \mathscr{A}(A,C)$, as well as the postcomposition with $f$, $f_{\ast}\colon \mathscr{A}(C,A)\longrightarrow \mathscr{A}(C,B)$, have a right adjoint. Since adjoints are unique up to unique isomorphism if they exist, this is a property of a bicategory, not an additional structure on it.

\begin{prop} \label{cosmos-mod} Let $\V$ be a cosmos. Then there is a closed bicategory $\MODONE{\V}$ in which the objects are given by small $\V$-enriched categories; $1$-morphisms from $\A$ to $\B$ are $(\A,\B)$-bimodules, with composition given by balanced product and $\mathrm{id}_\A$ given by the $(\A,\A)$-bimodule $\A$; the $2$-morphisms are given by morphisms of bimodules; if $X$ is an $(\A,\B)$-bimodule, then the right adjoints of pre- and postcomposition with $X$ are given by $\mathrm{map}_\A(X,-)$ and $\mathrm{map}_{\Bop}(X,-)$.
\end{prop}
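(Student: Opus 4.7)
The plan is to read off everything from the coend/end formalism of enriched category theory, which is valid in any cosmos $\V$. The key observation is that the balanced smash product and the mapping bimodule defined in Subsection~\ref{sec:mod} are precisely a coend and an end, respectively:
\[X \wedge_\B Y \;\cong\; \int^{b \in \B} X(-,b) \wedge Y(b,-), \qquad \mathrm{map}_\A(X,Z) \;\cong\; \int_{a \in \A} \underline{\V}(X(a,-),\, Z(a,-)),\]
where $\underline{\V}(-,-)$ denotes the internal hom of $\V$. From these descriptions it is immediate that $\wedge_\B$ is a bifunctor $\MOD{\A}{\B} \times \MOD{\B}{\C} \to \MOD{\A}{\C}$ and that $\mathrm{map}_\A(X,-)$ sends an $(\A,\C)$-bimodule to a $(\B,\C)$-bimodule, as required.

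For the bicategory structure I would check associativity, unitality, and the coherence axioms in turn. Associativity
\[(X \wedge_\B Y) \wedge_\C Z \;\cong\; \int^{b,c} X(-,b) \wedge Y(b,c) \wedge Z(c,-) \;\cong\; X \wedge_\B (Y \wedge_\C Z)\]
is Fubini for coends. The unit laws $\A \wedge_\A X \cong X$ and $X \wedge_\B \B \cong X$ are an application of the coend form of the enriched Yoneda lemma, which states that $\int^{a} \A(-,a) \wedge F(a) \cong F$ for any $\V$-functor $F$ on $\A$. The pentagon and triangle axioms then follow formally from naturality of these Fubini and Yoneda isomorphisms. For the closed structure, the adjunction
\[\mathrm{Hom}_{(\A,\C)}(X \wedge_\B Y,\, Z) \;\cong\; \mathrm{Hom}_{(\B,\C)}(Y,\, \mathrm{map}_\A(X, Z))\]
(and its analogue for postcomposition with $X$, yielding the right adjoint $\mathrm{map}_{\Bop}(X,-)$) is obtained by writing the hom-sets as ends, applying the internal $\wedge$--$\underline{\V}$ adjunction of $\V$ pointwise, and using Fubini to collapse both sides to the common iterated end $\int_{a,b,c} \underline{\V}\bigl(X(a,b)\wedge Y(b,c),\, Z(a,c)\bigr)$.

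The main obstacle is purely one of variance bookkeeping: in each step I need to track carefully the covariant/contravariant dependence of $\wedge_\B$ and $\mathrm{map}_\A$ on the $\A$-, $\B$- and $\C$-arguments, verify that the resulting objects lie in the bimodule categories claimed, and check that the adjunctions are $\V$-natural rather than merely natural isomorphisms of underlying $\mathrm{Set}$-valued hom-functors. Once that is pinned down, everything reduces to the standard construction of the bicategory of $\V$-enriched categories and bimodules for a cosmos $\V$, which is well known from enriched category theory (cf.\ Kelly's book).
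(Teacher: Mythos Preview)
Your proposal is correct. The paper's own proof is essentially a list of citations (B\'enabou for $\V=\mathrm{Set}$, Vogt--tom Dieck for $\V=\mathrm{Top}$, and Shulman's \emph{Enriched indexed categories} as a general reference), so your coend/end argument is not a different route but rather an unpacking of what those references contain: the balanced product is a coend, the mapping bimodule is an end, associativity is Fubini, unitality is co-Yoneda, and closedness follows from the internal hom adjunction of $\V$ applied under the end sign. One small over-worry: for the closed bicategory structure as defined in the paper (right adjoints to pre- and postcomposition on the $1$-morphism categories), ordinary $\mathrm{Set}$-natural adjunctions suffice; you do not need to verify $\V$-naturality.
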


\begin{proof} The bicategory structure was first discussed in \cite{benabou} for $\V=\mathrm{Set}$; \cite[Prop.~2.6]{VH} is a classical reference for $\V=\mathrm{Top}$, though it omits bicategorical language. A general reference is \cite[Sec.~3, esp. Lemmas~3.25,~3.27]{shulman:enriched_index_cat}. 
\end{proof}

\begin{rema} In the literature, there are three different names for what we call bimodules here, all of which seem to be common in some circles; the other two are distributeurs and profunctors. Consequently, the bicategory introduced above is sometimes also called $\mathrm{Dist}(\V)$  or $\mathrm{Prof}(\V)$.
\end{rema}

\begin{rema}
The category $\MOD{\A}{\B}$ can again be jazzed up to a spectrally enriched category: If we view two $(\A,\B)$-bimodules $X$ and $Y$ as left $(\A\wedge\Bop)$-modules (or right $(\Aop\wedge \B)$-modules), we can define a mapping spectrum $\mathrm{map}_{(\A,\B)}(X,Y)$ with underlying set $\mathrm{Hom}_{(\A,\B)}(X,Y)$. 
Thus, $\MODONE{\SP}$ is a spectrally enriched closed bicategory in the obvious sense. We don't give further details since we won't use this enrichment.
\end{rema}

\begin{exam}
In addition to $\V=\mathrm{Set}$ and $\V=\SP$, another interesting example of a cosmos is $\V=\mathrm{Ab}$. An $\mathrm{Ab}$-enriched category is usually called a preadditive category, and a preadditive category with one element is the same as a ring, with a bimodule in the sense discussed here corresponding to a bimodule in the usual sense (whence the name). Thus, we get as a full sub-bicategory  of $\MODONE{\mathrm{Ab}}$ the bicategory of rings, $(R,S)$-bimodules and $(R,S)$-linear homomorphisms between them, which is sometimes called the Morita category. More generally, you can take $\V=R\mathrm{-Mod}$ for some commutative ring $R$. You may also take $\V=\mathrm{Ch}_R$. A $\mathrm{Ch}_R$-category is the same as an $R$-linear dg-category. Suppose that $\A$ and $\B$ are $R$-linear categories (concentrated in degree $0$), then an $(\A,\B)$-bimodule is the same as a chain complex of $(\A,\B)$-bimodules over $R\mathrm{-Mod}$. Thus we get as a full sub-bicategory of $\mathrm{Mod}(\mathrm{Ch}_R)$ the bicategory of $R$-linear categories and chain complexes of $(\A,\B)$-bimodules. We will study this in detail in the rational case in Section~\ref{sec:rational}.
\end{exam}

\subsection{Deriving $\wedge_\C$ and $\mathrm{map}_\C$}\label{sec:dermod}
We will now derive the whole setup in the sense that we pass to the homotopy category of every bimodule category $\MOD{\A}{\B}$, and define a derived version of the balanced smash product which allows us to view the collection of all derived bimodule categories as a bicategory, as well as derived versions of the mapping spectra which exhibit this bicategory as closed. Technically, we achieve this by using the notion of a Quillen adjunction of two variables \cite[Sec.~4.1]{hovey}.

Throughout the rest of this subsection, let
$X$ be an $(\mathcal{A},\mathcal{B})$-bimodule, $Y$ a $(\mathcal{B},\mathcal{C})$-bimodule, $Z$ a $(\mathcal{C},\mathcal{D})$-bimodule, $U$ an $(\mathcal{A},\mathcal{D})$-bimodule, and $V$ an $(\mathcal{A},\mathcal{C})$-bimodule. (This convention will always be clear from the context.)

\begin{thm}
    \label{der} The following data defines a closed bicategory $\mathrm{DerMod}(\SP)$: objects are small $\SP$-enriched categories satisfying (C); $1$-morphisms from $\A$ to $\B$ are $(\A,\B)$-bimodules; $2$-morphisms are given by
    \[(\DM{\A}{\B})(X,Y)=[X,Y]_{(\A,\B)}\,.\]
    The identity $1$-morphism of an object $\A$ is the $(\A,\A)$-bimodule $\A$ and the identity $2$-morphism of a $1$-morphism $X$ is $\mathrm{id}_X$. The composition of $1$-morphisms and their adjoints are given by the functors
\[-\wedge^L_{\B}-\colon\quad\DM{\A}{\B}\times \DM{\B}{\C} \longrightarrow \DM{\A}{\C}\,,\]
\[R\mathrm{map}_{\Cop}\colon \quad \DM{\B}{\C)}^{\mathrm{op}}\times \DM{\A}{\C}\longrightarrow \DM{\A}{\B}\,,\]
and 
\[R\mathrm{map}_{\A}\colon \quad \DM{\A}{\B}^{\mathrm{op}}\times \DM{\A}{\C}\longrightarrow \DM{\B}{\C}\,,\]
which are the total derived functors of $-\wedge_\B-$, $\mathrm{map}_{\Cop}$ and $\mathrm{map}_{\A}$. Explicitly, for $Q$ a functorial cofibrant replacement and $R$ a functorial fibrant replacement, we have
\[X\wedge^L_\B Y \cong QX\wedge_\B QY\,, \quad R\mathrm{map}_{\Cop}(Y,V)\cong \mathrm{map}_{\Cop}(QY,RV)\]
and
\[R\mathrm{map}_{\A}(X,U)\cong \mathrm{map}_{\Cop}(QX,RU)\,.\]
In particular, the closed bicategory structure induces the following natural isomorphisms: 
    	\begin{enumerate}
	    \item[(a)] $\A\wedge_\A^L X\cong X\cong X\wedge^L_\B \B$ in $\DM{\A}{\B}$,
		\item[(b)] $(X\wedge^L_{\mathcal{B}} Y) \wedge^L_{\mathcal{C}} Z \cong X\wedge^L_{\mathcal{B}}(Y\wedge^L_{\mathcal{C}} Z)$ in $\DM{\A}{\D}$,
		\item[(c)]  
		$[X\wedge^L_{\mathcal{B}} Y, V]_{(\A,\C)}\cong [X,R\mathrm{map}_{\Cop}(Y,V)]_{(\A,\B)}\cong [Y,\mathrm{map}_{\mathcal{A}}(X,V)]_{(\B,\C)}$,
		\item[(d)] $R\mathrm{map}_{\A}(\A,X)\cong X \cong R\mathrm{map}_{\Bop}(\B,X)$ in $\DM{\A}{\B}$,
 		\item[(e)] 
		$R\mathrm{map}_{\mathcal{A}}(X\wedge^L_{\mathcal{B}} Y, U)\cong R\mathrm{map}_{\mathcal{B}}(Y, R\mathrm{map}_{\mathcal{A}}(X,U))$ in $\DM{\C}{\D}$,
		\item[(f)] $R\mathrm{map}_{\mathcal{D}^{\mathrm{op}}}(Z,R\mathrm{map}_{\mathcal{A}}(X,U))\cong R\mathrm{map}_{\mathcal{A}}(X,R\mathrm{map}_{\mathcal{D}^{\mathrm{op}}}(Z,U))$ in $\DM{\B}{\C}$.
	\end{enumerate}
\end{thm}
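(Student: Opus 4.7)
The plan is to derive the closed-bicategory structure of Proposition~\ref{cosmos-mod} by exhibiting $(-\wedge_\B -, \mathrm{map}_{\Cop}, \mathrm{map}_\A)$ as a Quillen adjunction of two variables between the projective model structures on bimodule categories, and then invoking Hovey's derived-adjunction machinery \cite[Sec.~4.3]{hovey}. First note that if $\A$ and $\B$ satisfy (C) then so does $\A\wedge\Bop$, because $(\A\wedge\Bop)((a,b),(a',b'))=\A(a,a')\wedge\B(b',b)$ is a smash of cofibrants. Hence Theorem~\ref{spectral_diagrams} equips each $\MOD{\A}{\B}$ with a cofibrantly generated projective model structure whose generating (trivial) cofibrations have the form $f\wedge \underline{(a,b)}$ for $f$ a generating (trivial) cofibration of $\SP$ and $\underline{(a,b)}$ the representable $(\A,\B)$-bimodule.

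I would then verify the pushout-product axiom for $-\wedge_\B-$. The two point-set adjunctions underlying (c) already hold by Proposition~\ref{cosmos-mod}, so by \cite[Cor.~4.2.5]{hovey} it suffices to test the axiom on pairs of generators. A direct coend computation gives $\underline{(a_0,b_0)}\wedge_\B\underline{(b_0',c_0)}\cong \underline{(a_0,c_0)}\wedge\B(b_0',b_0)$ as an $(\A,\C)$-bimodule, which is cofibrant thanks to (C); the pushout-product axiom then reduces to the corresponding axiom in $\SP$ \cite[Thm.~12.1]{MMSS}. Hovey's \cite[Thm.~4.3.2]{hovey} produces the total derived functors $-\wedge^L_\B-$, $R\mathrm{map}_{\Cop}$ and $R\mathrm{map}_\A$, computable by the $Q$/$R$ formulas stated, together with the derived adjunctions in (c).

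The main obstacle is the unit isomorphism (a): although $\A\wedge_\A X=X$ on the nose, the bimodule $\A\in\MOD{\A}{\A}$ is typically not projectively cofibrant, so a priori $\A\wedge_\A^L X$ could disagree with $X$. I would resolve this by the strategy in \cite[Sec.~3]{shulman:enriched_index_cat}: condition (C) implies that each level of the two-sided bar construction $B_\bullet(\A,\A,\A)$ is a wedge of smash products of cofibrant spectra, so its realisation provides a cofibrant replacement $Q\A\to\A$ in $\MOD{\A}{\A}$. For cofibrant $X$ one then has $Q\A\wedge_\A X\cong |B_\bullet(\A,\A,X)|$, and the extra degeneracy coming from the unit of $\A$ exhibits the natural map $|B_\bullet(\A,\A,X)|\to X$ as a simplicial homotopy equivalence; this establishes (a). Associativity (b) is comparatively formal: the underived functor is strictly associative, and after cofibrantly replacing all three inputs, Ken Brown's lemma combined with the Quillen-bifunctor property ensures that both iterated composites compute the correct derived functor.

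Finally, the isomorphisms (d)--(f) are pure closed-bicategory formalism, parallel to \cite[Ch.~16]{may-sig}: (d) follows from (c) by specialising one variable to the unit $\A$ or $\B$ and then applying (a); (e) and (f) encode the uniqueness of iterated right adjoints to the two- and three-fold composition functors, and are obtained from (c) by repeatedly applying the derived tensor-hom adjunction to reduce both sides to the same derived Hom out of $X\wedge^L_\B Y\wedge^L_\C Z$. Once (a)--(f) are in hand, the three data (objects, $1$-morphisms with composition, $2$-morphisms) assemble into a closed bicategory in the sense of \cite[Def.~16.3.1]{may-sig}.
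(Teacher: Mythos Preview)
Your overall strategy matches the paper's: verify the pushout-product axiom on generating cofibrations (the paper writes the same identity $(f\wedge\underline{(a,b)})\Osq_\B(g\wedge\underline{(b',c)})\cong(f\Osq g)\wedge\B(b,b')\wedge\underline{(a,c)}$), invoke Hovey's derived two-variable adjunction, and deduce (d)--(f) formally from the closed-bicategory axioms. The one substantive divergence is your treatment of the unit isomorphism (a).

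The paper proves (a) differently: it observes that $\A$ is \emph{right flat} in the sense that $\A\wedge_\A-$ preserves weak equivalences (trivially, since $\A\wedge_\A X\cong X$ on the nose), and then applies Corollary~\ref{flat}, whose proof rests on Proposition~\ref{resolve_one} (cofibrant $\Rightarrow$ flat), established via $h$-cofibrations specific to orthogonal spectra. Your bar-construction route is a genuine alternative and is in principle more portable to other model categories of spectra---indeed the paper remarks that its own argument for Proposition~\ref{resolve_one} is hard to transport.

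However, your argument for (a) has a gap. You assert that because each level $B_n(\A,\A,\A)$ is a wedge of smashes of cofibrant spectra, the realisation $|B_\bullet(\A,\A,\A)|$ is a cofibrant $(\A,\A)$-bimodule. Levelwise cofibrancy is not enough: you need the simplicial object to be \emph{Reedy} cofibrant, i.e.\ the latching maps to be cofibrations. These latching maps are built from the degeneracies, which in turn come from the unit maps $\sph\to\A(a,a)$. Condition (C) says the hom-spectra $\A(a,a')$ are cofibrant, but says nothing about the unit maps being cofibrations. Without this, $|B_\bullet(\A,\A,\A)|$ need not be projectively cofibrant, and you cannot identify it with $Q\A$. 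The extra-degeneracy argument still gives $|B_\bullet(\A,\A,X)|\simeq X$, but to conclude $Q\A\wedge_\A X\simeq X$ you would then need to know that $Q\A\to|B_\bullet(\A,\A,\A)|$ remains a weak equivalence after $-\wedge_\A X$, which is exactly the flatness statement you are trying to avoid. This is repairable (e.g.\ use the fat realisation, or first replace $\A$ by a spectral category with cofibrant unit maps), but as written the step ``so its realisation provides a cofibrant replacement'' is unjustified.
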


\begin{proof} Let $\A$, $\B$ and $\C$ denote small spectrally enriched categories satisfying (C). The closedness of the bicategory $\MODONE{\SP}$ gives natural isomorphisms
\[\varphi_l\colon \mathrm{Hom}_{(\A,\C)}(X\wedge_\B Y,V) \xrightarrow{\cong} \mathrm{Hom}_{(\B,\C)}(Y,\mathrm{map}_{\A}(X,V))\]
and
\[\varphi_r\colon \mathrm{Hom}_{(\A,\C)}(X\wedge_\B Y,V) \xrightarrow{\cong} \mathrm{Hom}_{(\A,\B)}(X,\mathrm{map}_{\Cop}(Y,V))\,.\]
The categories $\MOD{\A}{\B}$, $\MOD{\B}{\C}$ and $\MOD{\A}{\C}$ with the quintuple consisting of $\wedge_{\B}$, $\mathrm{Hom}_r=\mathrm{map}_{\Cop}$, $\mathrm{Hom}_l=\mathrm{map}_{\A}$ and the two isomorphisms $\varphi_r$ and $\varphi_l$ form an adjunction of two variables in the sense of \cite[Def.~ 4.1.12]{hovey}. We want to apply \cite[Cor.~4.2.5]{hovey} to show that $\wedge_\B$ is a Quillen bifunctor. 

For this we have to check that the pushout product of two generating cofibrations is a cofibration, and that it is a trivial cofibration if one of the factors is a generating trivial cofibration. For the definition of the pushout products $\Osq$ and $\Osq_\B$, see \cite[Def.~4.2.1]{hovey}. We check the first statement, the other two being similar. We may choose the generating cofibrations of the form $f\wedge \underline{(a,b)}$ and $g\wedge \underline{(b',c)}$, where $f$ and $g$ belong to a class of generating cofibrations of $\SP$. Up to isomorphism of morphisms, we have the identity
\begin{align}\label{box}
    (f\wedge \underline{(a,b)})\Osq_\B (g\wedge\underline{(b',c)}) \cong (f\Osq g) \wedge \B(b,b') \wedge \underline{(a,c)}\,.
\end{align}
By the pushout-product axiom for $\SP$, $f\Osq g$ is a cofibration. Now, $\B(b,b')$ is cofibrant by (C) and thus $(f\Osq g) \wedge \B(b,b')$ is a cofibration, since it is a smash product of a cofibration with a cofibrant object. Here we use the pushout-product axiom for $\SP$ again. Thus, the right hand side of~\eqref{box} has the left lifting property with respect to all trivial fibrations and is thus a cofibration.

Proposition 4.3.1 of \cite{hovey} then applies to show that we have total derived functors as in the statement of the theorem and that the quintuple \[(\wedge^L_\B, R\mathrm{map}_{\Cop}, R\mathrm{map}_{\A}, R\varphi_r, R\varphi_l)\] defines an adjunction of two variables. This gives the isomorphism (c). Isomorphism (b) follows from the explicit description of $\wedge^L_\B$ together with the fact that the balanced smash product of two cofibrant bimodules is cofibrant, which follows from the Quillen bifunctor property.

To show that $\mathrm{DerMod}(\SP)$ is actually a bicategory, we are left to deal with two points: Firstly, that there is an associativity isomorphism satisfying a coherence square. This follows directly from the corresponding fact for $\MODONE{\SP}$, as in the proof of \cite[Prop.~4.3.1 or Prop.~4.3.2]{hovey}. Secondly, that we have an identity $1$-morphism at every object. Surprisingly, this is the more difficult part, since the identity $\A$ might be non-cofibrant. However, we may use Corollary~\ref{flat} below to see that 
\[\A\wedge^L_\A X \cong \A \wedge_\A X \cong X\]
since $\A$ is obviously right flat in the sense of Definition~\ref{flat-defi}. The coherence conditions for this unitality isomorphism are readily checked.

The fact that the derived mapping functors are right adjoints of the derived smash products is part of the adjunction of two variables statement. Summarising, we have now proved that $\mathrm{DerMod}(\SP)$ is a closed bicategory, amounting to isomorphisms (a) to (c).

Now the point is that (d) to (f) are valid in any closed bicategory: (d) follows from (a) -- if pre- and postcomposition with $\A$ is isomorphic to the identity, then the same has to be true for their adjoints. Similarly, (e) and (f) follow from (b).
\end{proof}

\begin{prop}\label{resolve_one}
If $X$ is a cofibrant $(\A,\B)$-spectrum, then $X\wedge_\B -$ preserves weak equivalences.
\end{prop}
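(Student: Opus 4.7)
This is a flatness statement for cofibrant bimodules, asserting that $X\wedge_\B -$ already exhibits the homotopical behaviour of its left-derived functor. My strategy is transfinite cellular induction on $X$: since cofibrant $(\A,\B)$-bimodules are retracts of cell complexes built from the generating cofibrations $F_k(S^n_+)\wedge\underline{(a,b)}\to F_k(D^{n+1}_+)\wedge\underline{(a,b)}$, and the class of $X$ for which the conclusion holds is trivially closed under retracts, it suffices to treat such cell complexes.

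For the base case $X=F_k(S^n_+)\wedge\underline{(a,b)}$, the Yoneda-like identity $\B(?,b)\wedge_\B Y(?,c)\cong Y(b,c)$ yields
\begin{align*}
    (X\wedge_\B Y)(a',c)\cong F_k(S^n_+)\wedge\A(a,a')\wedge Y(b,c).
\end{align*}
A weak equivalence $Y\to Y'$ is objectwise, so at each $(a',c)$ one is smashing the weak equivalence $Y(b,c)\to Y'(b,c)$ with the orthogonal spectrum $F_k(S^n_+)\wedge\A(a,a')$, which is cofibrant by assumption~(C) combined with the pushout-product axiom in $\SP$. Cofibrant orthogonal spectra in the stable model structure of \cite{MMSS} are \emph{flat} — smashing with them preserves all weak equivalences, not merely those between cofibrant objects — and this is a consequence of the monoid axiom invoked before Theorem~\ref{spectral_diagrams}. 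This settles the base case.

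The inductive step treats a pushout $X'=X\cup_A B$ along a (coproduct of) generating cofibration(s). Since $-\wedge_\B Y$ is a left adjoint it preserves pushouts, so applying it for $Y$ and for $Y'$ produces a cube with pushout faces on top and bottom; three of the four vertical maps are weak equivalences by induction. The gluing lemma then forces the fourth to be a weak equivalence, and transfinite compositions are handled by commuting $-\wedge_\B Y$ with filtered colimits and using that $\pi_\ast$ does the same in $\SP$.

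The main obstacle is the cofibration hypothesis required by the gluing lemma: when $Y$ is not cofibrant, the map $A\wedge_\B Y\to B\wedge_\B Y$ need not be a cofibration of bimodules. The resolution is to exploit the explicit form of the cells — via the base-case formula this map is, level-wise in $\SP$, a generating cofibration of $\SP$ smashed with $\A(a,a')\wedge Y(b,c)$ — so that the verification can be performed entirely within $\SP$, where the monoid-axiom-based flatness of cofibrant orthogonal spectra provides exactly the needed input.
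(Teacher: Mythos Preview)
Your overall strategy coincides with the paper's: base case via the Yoneda formula and flatness of cofibrant orthogonal spectra, reduction to cell complexes via retracts, then transfinite cellular induction. You also correctly identify the crux, namely that $A\wedge_\B Y\to B\wedge_\B Y$ need not be a model-categorical cofibration when $Y$ is arbitrary.

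The gap is in your resolution of that crux. Saying that ``monoid-axiom-based flatness of cofibrant orthogonal spectra provides exactly the needed input'' is not precise enough, and in fact names the wrong tool. Flatness of a cofibrant spectrum $C$ is the statement that $C\wedge-$ preserves weak equivalences; you already used this in the base case, and it does not by itself tell you that a pushout along $(\text{generating cofibration})\wedge Z$ is a homotopy pushout for arbitrary $Z$. The monoid axiom concerns \emph{trivial} cofibrations smashed with arbitrary objects, not plain cofibrations, so it does not close the gap either. What is actually required is a weaker, topologically-flavoured notion of cofibration that \emph{is} preserved by smashing with arbitrary objects and that still supports a gluing lemma. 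The paper makes this explicit by introducing $h$-cofibrations in $\SP$ (maps admitting a retraction of the mapping cylinder onto the usual subspace): the generating cofibrations are $h$-cofibrations, $h$-cofibrations are stable under $-\wedge_\B Z$ by inspection, and \cite[Thm.~8.12(iv)]{MMSS} supplies the gluing statement along $h$-cofibrations. The same notion is needed at limit ordinals: $\pi_\ast$ does not commute with arbitrary filtered colimits in $\SP$, only with colimits along levelwise closed inclusions, i.e.\ $h$-cofibrations. Your sentence ``$\pi_\ast$ does the same in $\SP$'' hides exactly this hypothesis. Once you name $h$-cofibrations and invoke the appropriate results from \cite{MMSS}, your argument becomes the paper's argument.
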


\begin{proof} We first treat the case where $X$ is $F_k A\wedge \underline{(a,b)}$, where $A$ is any pointed CW-complex. Let $Y\rightarrow Y'$ be any weak equivalence of $(\B,\C)$-spectra. Smashing with $F_k A\wedge \underline{(a,b)}$, we get the map
\[F_kA\wedge \A(a,-)\wedge Y(b,-)\longrightarrow F_kA\wedge \A(a,-)\wedge Y(b,-)\,.\]
Now, $\A(a,-)$ is objectwise a cofibrant spectrum by (C), and so is $F_k A$. But smashing with a cofibrant spectrum preserves weak equivalences by \cite[Prop.~12.3]{MMSS}. 

Now we want to reduce to the general case. By general theory of cofibrantly generated model categories, a cofibrant object is a retract of a cell complex. Since weak equivalences are closed under retracts, we may assume that $X$ is a (transfinite) cell complex, i.\,e.\ a transfinite composition (cf.\ \cite[Def.~10.2.2]{Hirschhorn}) of pushouts along generating cofibrations. Suppose that the transfinite composition is indexed by some $\kappa$ and denote the intermediate 'skeleta' by $X_{\alpha}$, $\alpha<\beta$, where
$X_{\alpha+1}$ can be obtained from $X_{\alpha}$ by a cobase change along a coproduct of generating cofibrations. In particular, $X_{\alpha}\hookrightarrow X_{\alpha+1}$ is a cofibration in the projective model structure, but this property is not preserved when smashing (over $\C$) with an arbitrary spectrum. This is why we have to use the more subtle notion of $h$-cofibration. This is a concept which is not available in an arbitrary model category, but in many topological examples, in particular in $\SP$. Our use of $h$-cofibrations is restricted to this proof.

We define a map of $\C$-spectra $A\rightarrow B$ to be an $h$-cofibration if $B\wedge I_+$ retracts onto $A\wedge I_+ \cup_A B\wedge \{0\}_+$, cf.\ \cite[p.~457]{MMSS}. Since the generating cofibrations are $h$-cofibrations, the same is true for the inclusions $X_\alpha\rightarrow X_{\alpha+1}$. Moreover, $h$-cofibrations are preserved under balanced smash products by definition.

Now we are in shape to prove the proposition for general $X$ by transfinite induction on $\beta$. It is true for the domains and targets of the generating cofibrations by the first step of the proof, applied to $A=\mathrm{S}^n_+$ and $A=\mathrm{D}^n_+$. Thus, if it is true for $X_{\alpha}$, then also for $X_{\alpha+1}$, using \cite[Thm.~8.12(iv)]{MMSS}. For limit ordinals $\beta$, we know that $X_{\beta}$ is the colimit of  $X_{\alpha}$, $\alpha<\beta$. This is preserved when smashing with $Y$ and $Y'$. In orthogonal spectra, a stable equivalence is the same as a $\pi_{\ast}$-isomorphism \cite[Prop.~8.7]{MMSS}. Computing the stable homotopy groups commutes with colimits along $h$-cofibrations, since these are levelwise closed inclusions.
\end{proof}

\begin{defi}\label{flat-defi}
An $(\A,\B)$-spectrum $F$ is \emph{right flat} if $F\wedge_\B -$ preserves weak equivalences. $f\colon F\rightarrow X$ is called a \emph{right flat replacement} of $X$ if $F$ is right flat and $f$ is a weak equivalence.
\end{defi}

\begin{cor}\label{flat}
Let $f\colon F\rightarrow X$ be a right flat replacement of $X$. Then there is a natural isomorphism
\[X\wedge^L_\B Y \cong F\wedge_\B Y\]
for any $(\B,\C)$-bimodule $Y$. 
\end{cor}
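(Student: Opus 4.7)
The plan is to exhibit a natural chain of weak equivalences connecting $F\wedge_\B Y$ to the explicit double-cofibrant-replacement model of $X\wedge^L_\B Y$ given in Theorem~\ref{der}. Fix functorial cofibrant replacements $q_F\colon QF\to F$ and $q_Y\colon QY\to Y$.

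First I observe that $QF$ doubles as a cofibrant replacement of $X$: the composite $QF\xrightarrow{q_F} F\xrightarrow{f} X$ is a weak equivalence (both factors are, $q_F$ by construction and $f$ by the hypothesis that $F$ is a right flat replacement), and $QF$ is cofibrant. Invoking the explicit formula from Theorem~\ref{der}, we therefore have a canonical isomorphism
\[
X\wedge^L_\B Y \;\cong\; QF\wedge_\B QY
\]
in $\DM{\A}{\C}$, natural in $Y$.

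Now consider the zigzag
\[
QF\wedge_\B QY \;\xrightarrow{\,q_F\wedge_\B \mathrm{id}\,}\; F\wedge_\B QY \;\xrightarrow{\,\mathrm{id}\wedge_\B q_Y\,}\; F\wedge_\B Y .
\]
The first map is a weak equivalence by (the symmetric version of) Proposition~\ref{resolve_one}: since $QY$ is a cofibrant $(\B,\C)$-bimodule, the functor $-\wedge_\B QY$ preserves weak equivalences of $(\A,\B)$-bimodules, and $q_F$ is such a weak equivalence. The second map is a weak equivalence because $F$ is right flat, so that $F\wedge_\B -$ sends the weak equivalence $q_Y$ to a weak equivalence. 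Composing, we obtain a natural weak equivalence $QF\wedge_\B QY\xrightarrow{\simeq} F\wedge_\B Y$, which combined with the previous display yields the desired natural isomorphism $X\wedge^L_\B Y\cong F\wedge_\B Y$ in $\DM{\A}{\C}$.

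The only non-cosmetic point is the appeal to the symmetric analogue of Proposition~\ref{resolve_one}, namely that smashing over $\B$ with a cofibrant bimodule on the right also preserves all weak equivalences on the left. This is where I expect the main care to be needed, but it is not really an obstacle: either one repeats the $h$-cofibration/transfinite-induction argument of Proposition~\ref{resolve_one} verbatim on the other side, or one notes that the Quillen bifunctor property established inside the proof of Theorem~\ref{der} is symmetric in its two variables, so the symmetric statement follows by the same reasoning. Naturality of the resulting isomorphism in $Y$ (and in $f\colon F\to X$) is immediate from the naturality of each constituent map and of the cofibrant replacement functors.
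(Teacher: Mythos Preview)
Your proof is correct and follows essentially the same route as the paper. The paper's zigzag is
\[QX\wedge_\B QY \xrightarrow{\sim} X \wedge_\B QY \xleftarrow{\sim} F\wedge_\B QY \xrightarrow{\sim} F\wedge_\B Y,\]
using the symmetric version of Proposition~\ref{resolve_one} for the first two arrows and right flatness of $F$ for the third; you instead replace $QX$ by $QF$ (legitimate, since $QF\to F\to X$ is a cofibrant replacement of $X$), which collapses the first two arrows into one and turns the zigzag into an honest chain of forward weak equivalences. Both arguments rest on exactly the same two ingredients you identify.
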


\begin{proof}
There are weak equivalences
\[X\wedge^L_\B Y = QX\wedge_\B QY \xrightarrow{\sim} X \wedge_\B QY \xleftarrow{\sim} F\wedge_\B QY \xrightarrow{\sim} F\wedge_\B Y\,, \]
where the first and second weak equivalence follow from Proposition~\ref{resolve_one}.
\end{proof}

Left flat replacements are defined similarly and the statement of the corollary carries over mutatis mutandis.

\begin{rema}
Proposition~\ref{resolve_one} and Corollary~\ref{flat} have been proved to show the isomorphism $\A\wedge^L_\A X\cong X$. The proof are technically much more advanced than the rest of the proofs in this section and in particular harder to generalise to other model categories of spectra than orthogonal spectra, cf.\ Section~\ref{sec:change}. In the understanding of the author, this is inevitable for Proposition~\ref{resolve_one} since the corresponding statement for $\C=\ast$ is a subtle point in all treatments he could find, but it would be nice to have a more straightforward proof of the fact that $\A\wedge^L_\A X\cong X$, going along another route.
\end{rema}

The one-object Yoneda lemma carries over to the derived setting without trouble since $\underline{c}$ is a cofibrant $\Cop$-spectrum. We state it here for later use.

\begin{lemma}\label{derYoneda}
For a $\C$-spectrum $X$ and $c\in \mathrm{Ob}(\C)$, there are natural isomorphisms in $\SHC$
\[\underline{c}\wedge_\C X\cong X(c)\quad\mathrm{and}\quad R\mathrm{map}_{\C}(\underline{c},X)\cong X(c)\,.\]
\end{lemma}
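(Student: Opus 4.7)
The plan is to reduce both isomorphisms to the underived enriched (co-)Yoneda lemma and then transfer them to the derived category using the cofibrancy of $\underline{c}$. The key preliminary observation is that $\underline{c}$ is cofibrant in $\mathrm{Fun}(\C,\SP)$: indeed, $\underline{c}\cong\sph\wedge\underline{c}$ is the image of the cofibrant sphere spectrum under the functor $(-)\wedge\underline{c}$, which is left Quillen as the left adjoint of evaluation at $c$ (evaluation trivially preserves fibrations and weak equivalences, since both are defined objectwise).

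First I would verify the underived statements in $\SP$. Unwinding the coequalizer defining $\underline{c}\wedge_\C X$, the structure maps of $X$ assemble into a map from $\bigvee_d X(d)\wedge\underline{c}(d)$ to $X(c)$ that coequalizes the two parallel arrows; conversely, $X(c)$ includes into the $d=c$ summand via $\mathrm{id}_c\in\C(c,c)$, and a direct check shows that this exhibits $X(c)$ as the coequalizer. This is the enriched co-Yoneda (density) formula, yielding $\underline{c}\wedge_\C X\cong X(c)$ in $\SP$. For the mapping spectrum, evaluation at $\mathrm{id}_c\in\C(c,c)$ defines a natural map out of the equalizer defining $\mathrm{map}_\C(\underline{c},X)$, and the classical enriched Yoneda lemma identifies this map as an isomorphism $\mathrm{map}_\C(\underline{c},X)\cong X(c)$.

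Finally I would derive. For the smash product, Proposition~\ref{resolve_one} shows that $\underline{c}\wedge_\C-$ preserves weak equivalences, so $\underline{c}$ is right flat in the sense of Definition~\ref{flat-defi}. Applying Corollary~\ref{flat} to the identity $\underline{c}\to\underline{c}$ yields $\underline{c}\wedge^L_\C X\cong\underline{c}\wedge_\C X\cong X(c)$ in $\SHC$. For the mapping spectrum, Theorem~\ref{der} identifies $R\mathrm{map}_\C(\underline{c},X)$ with $\mathrm{map}_\C(Q\underline{c},RX)$; since $Q\underline{c}\to\underline{c}$ is a weak equivalence between cofibrant objects and $RX$ is fibrant, the SM7-type compatibility of $\mathrm{map}_\C$ with the model structures gives a weak equivalence $\mathrm{map}_\C(Q\underline{c},RX)\simeq\mathrm{map}_\C(\underline{c},RX)$. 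The underived Yoneda isomorphism then produces $\mathrm{map}_\C(\underline{c},RX)\cong RX(c)\simeq X(c)$, the last weak equivalence using that weak equivalences in the projective model structure are objectwise. Naturality is immediate throughout. I do not expect a serious obstacle here: the delicate analytic point, that $\underline{c}\wedge_\C-$ preserves weak equivalences without any pre-replacement of $X$, has already been dispatched in Proposition~\ref{resolve_one}, and what remains is routine bookkeeping, primarily keeping the left/right variance conventions straight so that the coend collapses at $\mathrm{id}_c$.
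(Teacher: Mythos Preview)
Your proposal is correct and matches the paper's approach, which is just the one-line remark preceding the lemma: ``The one-object Yoneda lemma carries over to the derived setting without trouble since $\underline{c}$ is a cofibrant $\Cop$-spectrum.'' You have simply unpacked this sentence in detail, invoking Proposition~\ref{resolve_one} and Corollary~\ref{flat} for the smash side and the Quillen bifunctor property for the mapping side, exactly as intended.

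One small variance point worth tightening: for $\underline{c}\wedge_\C X$ the object $\underline{c}$ sits in the \emph{right}-module slot, so what you need (and what the paper states) is cofibrancy of $\underline{c}$ in $\mathrm{Fun}(\Cop,\SP)$, not $\mathrm{Fun}(\C,\SP)$. Your left-Quillen-adjoint-to-evaluation argument works verbatim with $\Cop$ in place of $\C$, so this is only a notational slip, not a gap.
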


\section{Changing the category of spectra}
\label{sec:change}

Throughout the paper hitherto, we investigated $\C$-spectra in the sense of functors from $\C$ to the category $\SP$ of orthogonal spectra. However, the literature also uses several other model categories of spectra, which are either Quillen equivalent to orthogonal spectra (respecting the smash product in one sense or the other, as discussed below), or describe a slightly different version of spectra, e.\, g.\ connective spectra or rational spectra. The purpose of this section is to bring all these other models in, in the following two ways: 
\begin{itemize} 
\item Firstly, we state conditions under which much of the framework built up so far can be built up with another category of spectra instead of orthogonal spectra.

\item Secondly, suppose we have built up the framework for two different model categories $\Scat$ and $\T$, and we have a Quillen equivalence between the two. Then we want to compare our constructions, performed in $\Scat$, can be compared with the same constructions, performed in $\T$.
\end{itemize}

The first item will be carried out in Subsection~\ref{spectra}. We will write down a list of assumptions on the model category of spectra and then deduce a substantial part of Section \ref{sec:ho-fun-c}. Roughly speaking, we generalise enough to write down derived smash products and mapping spectra, and prove the various adjunctions between them, cf.\ Proposition~\ref{der-minimalist}. What we will \emph{not} prove is the derived Yoneda Lemma \[\A \wedge^L_\A X\cong X\] since the way we proved it used rather specific properties of orthogonal spectra, cf.\ the proof of Proposition~\ref{resolve_one}. However, let us emphasise that we pursued a minimalist approach here, proving what we strictly need in the rest of the paper instead of maximising the generality. We can well imagine that a reader who is, for example, an expert on simplicial homotopy theory will find a way to prove the derived Yoneda Lemma for simplicial symmetric spectra, either transferring Proposition~\ref{resolve_one} or via another route.

The second item is dealt with in Subsection~\ref{sec:comparison}. The Quillen equivalence between $\Scat$ and $\T$ has to be compatible with the smash product. The literature knows (at least) two different ways in which a Quillen equivalence can be compatible with monoidal structures on its source and target: strong and weak monoidal Quillen equivalences. Their definitions will be recalled below. In many cases, it is possible to compare two categories of spectra by a strong monoidal Quillen equivalence, and then the comparison result is trivial. For instance, this applies to all pairs of model categories of spectra discussed in \cite{MMSS}. However, we also need (in Subsection~\ref{DK}) the comparison along the more restrictive notion of a weak monoidal Quillen equivalence, which is not trivial any longer, cf.\ Proposition~\ref{nabla}.

Note that the agenda of the first item may be carried out for spectrally enriched categories $\C$, while in the second case, we have to restrict to usual $\mathrm{Set}$-enriched categories, since it is technically difficult to compare $\Scat$-enriched with $\T$-enriched categories, cf.\ Remark~\ref{non-discrete}.

The reason that we get into this discussion in detail is twofold: Firstly, it is intrinsically satisfying to know that our results are independent of the choice of a model category of spectra. Secondly, and more concretely, our comparison results will become crucial in Subsection~\ref{DK}, where they are used in the rational case to pass from rational spectra to rational chain complexes.

\begin{rema}
We want to comment the way we intend to apply the comparison results of this subsection. Suppose $\Scat$ is a model category of spectra which is Quillen equivalent to orthogonal spectra, and we are interested in Theorem~\ref{intro:rep} from the Introduction for $\Scat$. Then we will use the result for $\SP$, to be proved below, and then compare the balanced smash product occuring (secretly) on the right hand side of~\eqref{eq:rep} to the corresponding balanced smash product in $\Scat$, using the machinery we are just about to develop, for instance the isomorphism~\eqref{Phitensor}. Similarly, if $\Scat$ is, say, the model category of simplicial symmetric spectra and the homology theory is defined on simplicial $\C$-sets instead of $\C$-spaces, we may first transfer it to $\C$-spaces (using the Quillen equivalence between simplicial sets and spaces), then apply the representation theorem here and translate back to simplicial spaces and simplicial symmetric spectra. 

Another strategy would be to develop bicategorical duality theory over $\Scat$ and then \emph{prove} Theorem~\ref{intro:rep} separately for $\Scat$. Although this is also a totally valid approach, it is \emph{not} the one we will use here -- mainly because of the technical problem mentioned above that we cannot prove the derived Yoneda Lemma for $\Scat$ and thus do not have a clean bicategory at hand.
\end{rema}

\subsection{Categories of spectra}
\label{spectra}

We start by distilling  properties of $\SP$ we used to set up the framework of Section~\ref{sec:ho-fun-c}. Let 
$(\Scat,\wedge, \sph)$ denote a model category which also has a monoidal structure. 

As already explained in the introduction, our proof of Proposition~\ref{resolve_one} is so specific that we don't aim at generalising it, or the derived Yoneda lemma. We rather pursue a minimalist approach, comprising the following: Set up homotopy categories, Subsection~\ref{sec:basics} up to and including the discussion of the triangulated structure; define balanced smash products and mapping spectra, Subsection~\ref{sec:mod}; derive these as in Theorem~\ref{der} to get $\wedge^L_\C$ and $R\mathrm{map}_{\C}$ and isomorphisms (b) through (f).

To prove these statements, we used the following list of properties of $\SP$:

\begin{itemize}\label{list:prop_of_spectra}
    \item The smash product and mapping spectra furnish $\SP$ with the structure of a cosmos, i.\,e.\ a closed symmetric monoidal category with all small limits and colimits.
    \item It has a cofibrantly generated stable \cite[Ch.~7]{hovey} model structure. There is a class of generating cofibrations and generating trivial cofibrations whose sources are cofibrant.
    \item The unit of the smash product is cofibrant.
    \item The pushout-product axiom \cite[Def.~3.1]{SSLondon} holds. 
    \item The monoid axiom \cite[Def.~3.3]{SSLondon} holds.
\end{itemize}

We have argued that the following 'meta theorem' holds:
\begin{prop}\label{der-minimalist}
Suppose that a model category $(\Scat, \wedge, \sph)$ of spectra satisfies the above list of properties. Then the statements of Theorem~\ref{der} hold for $\Scat$ in the place of $\SP$, except that $\DM{\Scat}$ may fail to have identities, thus is not a bicategory, and that isomorphism (a) may not hold. 
\end{prop}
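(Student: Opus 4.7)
The plan is to retrace the proof of Theorem~\ref{der}, step by step, and verify that each ingredient uses only the properties on the list on p.~\pageref{list:prop_of_spectra} (together with condition (C) on $\A$, $\B$, $\C$). The argument decomposes naturally into four blocks: (i) the projective model structure on $\mathrm{Mod}(\A,\B)=\mathrm{Fun}(\A\wedge\Bop,\Scat)$; (ii) the underived closed bicategory $\MODONE{\Scat}$; (iii) the Quillen bifunctor property of balanced smash; and (iv) passage to total derived functors together with the isomorphisms (b)--(f).

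For (i), I would invoke Shulman's Theorem~\ref{spectral_diagrams}. Its hypotheses are cofibrant generation of $\Scat$ with cofibrant sources of the generating (trivial) cofibrations, plus the monoid axiom, plus condition (C) on the enriched indexing category; all of these are on the list, and condition (C) for $\A\wedge\Bop$ follows from (C) for $\A$ and $\B$ together with the pushout-product axiom (the smash of two cofibrant objects in $\Scat$ is cofibrant). For (ii), one simply applies Proposition~\ref{cosmos-mod}, which requires only that $\Scat$ be a cosmos; this gives the underived closed bicategory with its two-variable adjunctions $\varphi_l$ and $\varphi_r$.

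For (iii), I would transcribe the pushout-product computation from the original proof of Theorem~\ref{der}: on generators one has
\[(f\wedge\underline{(a,b)})\,\Osq_\B\,(g\wedge\underline{(b',c)})\cong(f\Osq g)\wedge\B(b,b')\wedge\underline{(a,c)},\]
and condition (C) on $\B$ together with the pushout-product axiom for $\Scat$ shows the right-hand side is a cofibration. The corresponding statement for trivial cofibrations uses the monoid axiom in addition. Applying Hovey's Proposition~4.3.1 then delivers the total derived functors $\wedge^L_\B$, $R\mathrm{map}_{\Cop}$, $R\mathrm{map}_{\A}$ with the explicit formulas in terms of $Q$ and $R$, and an adjunction of two variables on the level of homotopy categories; this adjunction is precisely isomorphism (c). Associativity (b) follows exactly as before: the Quillen bifunctor property ensures that the balanced smash of two cofibrant bimodules is again cofibrant, so the iterated derived smashes in (b) can be computed by smashing cofibrant representatives, and the underived associativity descends.

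The remaining isomorphisms (e) and (f) follow from (b) and (c) by the purely formal bicategorical argument given at the end of the proof of Theorem~\ref{der}, which needs neither unitality nor any Yoneda-type input. The \emph{main obstacle}, and the reason the exception in the statement is necessary, is that the deduction of (a), and therefore of its adjoint reformulation (d), went through the derived Yoneda lemma, i.e.\ Proposition~\ref{resolve_one} and Corollary~\ref{flat}. That argument used $h$-cofibrations and orthogonal-spectrum-specific facts such as \cite[Prop.~8.7,~12.3]{MMSS} that are simply not available from our minimalist list; without them we cannot assert $\A\wedge^L_\A X\cong X$, and consequently $\DM{\Scat}$ is only an associative composition of $1$-morphisms with a candidate unit rather than a genuine bicategory — exactly the defect recorded in the proposition.
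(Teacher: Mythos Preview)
Your approach is exactly the paper's: the proposition carries no separate proof there, being introduced with ``We have argued that the following `meta theorem' holds'', so the intended argument is precisely to retrace Theorem~\ref{der} and observe that each step uses only the listed properties---which is what you do in (i)--(iv). Your remark that the deduction of (d) also rests on (a) is correct and in fact slightly sharper than the paper's own formulation, since in the proof of Theorem~\ref{der} isomorphism (d) is obtained from (a) by adjointness and hence is not secured by the minimalist hypotheses either.
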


\begin{rema}
The fact that $\SP$ is a cosmos (with respect to the smash product) was crucially needed to construct balanced smash products and mapping spectra, and the compatibility with the model structure to derive these, cf.\ Subsections~\ref{sec:mod} and \ref{sec:dermod}. The cofibrant generation is needed to construct model structures on $\C$-spectra. The facts that $\SP$ is a cosmos, the unit is cofibrant and the pushout-product axiom holds imply that it is a monoidal model category in the sense of \cite[Def.~4.2.6]{hovey}. The latter notion is slightly weaker than the three mentioned facts and would technically also suffice for our purposes. The monoid axiom is needed for Theorem~\ref{spectral_diagrams}. 
\end{rema}

The literature in stable homotopy theory contains a plethora of different model categories of spectra. Apart from orthogonal spectra, we will use the category $\mathrm{Sp}^{\Sigma}_{sSet}$ of simplicial symmetric spectra with the stable model structure from \cite{HSS}.

\begin{lemma}\label{simplicial-symmetric}
The model category $\mathrm{Sp}^{\Sigma}_{sSet}$ satisfies the above list of properties.
\end{lemma}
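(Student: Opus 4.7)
The plan is a straightforward checklist verification, since each of the five properties is already known for simplicial symmetric spectra from the foundational reference \cite{HSS}, with \cite{SSLondon} as a backup for the monoid axiom. I would dispatch the items in the order they appear in the list on page~\pageref{list:prop_of_spectra}, doing no new work beyond citation plus a few sentences pointing out where a given property sits in \cite{HSS}.

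First, for the cosmos structure: the smash product and internal mapping object on $\mathrm{Sp}^{\Sigma}_{sSet}$ are constructed in \cite{HSS} via the left Kan extension of the smash product on pointed simplicial sets along the tensor embedding of the symmetric sphere, and this is shown there to give a closed symmetric monoidal structure. Completeness and cocompleteness come for free because limits and colimits of symmetric spectra are computed levelwise in pointed simplicial sets.

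Second, the stable model structure on $\mathrm{Sp}^{\Sigma}_{sSet}$ is constructed in \cite{HSS} and shown there to be cofibrantly generated and stable. The generating (trivial) cofibrations have the form $F_n(i)$ where $i$ is a generating (trivial) cofibration of pointed simplicial sets; their sources $F_n(\partial\Delta^k_+)$ and $F_n(\Lambda^k_{j,+})$ are cofibrant, being images under the left Quillen functor $F_n$ of cofibrant pointed simplicial sets. The unit $\sph=F_0(S^0)$ is cofibrant for the same reason. The pushout-product axiom is verified in \cite{HSS} and is precisely the content of the statement there that $\mathrm{Sp}^{\Sigma}_{sSet}$ is a symmetric monoidal model category.

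The only genuinely non-formal item, and hence the main (though still mild) obstacle, is the monoid axiom: one needs that transfinite compositions of cobase changes of maps of the form $(\text{stable trivial cofibration})\wedge Z$ remain stable equivalences for arbitrary $Z$. This is established in \cite{HSS}, and is also singled out as one of the motivating examples in \cite{SSLondon}. With all five bullets checked, the lemma follows.
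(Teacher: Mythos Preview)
Your proposal is correct and takes essentially the same approach as the paper: both proofs are pure citation checks against \cite{HSS}, with the paper simply listing the relevant theorem numbers while you spell out which item each result covers. One small inaccuracy worth flagging: for the \emph{stable} model structure the generating trivial cofibrations are not all of the form $F_n(i)$ with $i$ a generating trivial cofibration of pointed simplicial sets---that describes only the level structure, and the stable structure adds further maps built from the mapping-cylinder factorisations of $\lambda_n\colon F_{n+1}S^1\to F_nS^0$ (see \cite[Def.~3.4.9]{HSS}); however, those additional maps also have cofibrant sources, so your conclusion stands.
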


\begin{proof} See \cite[Thm.~2.2.10, Thm.~3.4.4, Cor.~5.3.8, Cor.~5.5.2]{HSS}. Note that the authors of \cite{HSS} call a monoidal model category what we defined as a model category satisfying the pushout-product axiom. The fact that the unit is cofibrant is remarked on p.~53 of \cite{HSS}.
\end{proof}

\begin{rema} The paper \cite{MMSS} further treats the model categories of $\mathscr{W}$-spaces and sequential spectra. The treatment of $\mathscr{W}$-spaces and orthogonal spectra is completely analogous, so that all results (even Proposition~\ref{resolve_one}) will be true for $\mathscr{W}$-spaces, with the same references in \cite{MMSS} applying. All model categorical aspects apply to sequential spectra as well, but this is not a closed symmetric monoidal category and will be treated separately in Subsection~\ref{sequential}.
\end{rema}

We will now discuss some model categories of rational spectra originally introduced in \cite{shipley07}. These will be the main actors of Subsection~\ref{DK}. 
The four monoidal model categories are: 

\begin{itemize}
    \item the category $H\mathbb{Q}-\mathcal{M}od$ of modules over the monoid $H\mathbb{Q}$ in $\mathrm{Sp}^{\Sigma}_{sSet}$ with model structure as explained in \cite[Thm.~4.1(1)]{SSLondon};
    \item the model category of unbounded rational chain complexes \cite[Sec.~2.3]{hovey};
 \item the category $\mathrm{Sp}^{\Sigma}(s\mathrm{Vect}_{\mathbb{Q}})$ of symmetric spectra over simplicial $\mathbb{Q}$-vector spaces \cite{Hovey:symmSpec_modelcats};
    \item the category $\mathrm{Sp}^{\Sigma}(\mathrm{ch}^{+}_{\mathbb{Q}})$ of symmetric spectra over non-negatively graded rational chain complexes \cite{Hovey:symmSpec_modelcats}.
\end{itemize}
 
 The latter two model structures are constructed following the general construction \cite{hovey} of a model category of symmetric spectra over a given (nice) monoidal model category. It is applied to the categories of simplicial objects in $\mathbb{Q}$-vector spaces with the model structure from \cite[Ch.~II.4]{quillen} and to $\mathrm{ch}^{+}_{\mathbb{Q}}$ with the projective model structure \cite[Sec.~7]{dwyspa}.

\begin{lemma}\label{cats-of-spectra-2}
The four model categories mentioned above satisfy the list of properties on p.~\pageref{list:prop_of_spectra}. \end{lemma}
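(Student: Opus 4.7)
The plan is to verify the five listed properties --- cosmos structure, cofibrantly generated stable model structure with generators whose domains are cofibrant, cofibrant unit, pushout-product axiom, and monoid axiom --- for each of the four model categories, exploiting standard model-categorical transfer arguments and citing the relevant literature for most of the heavy lifting. In each case, the cosmos structure and the basic form of the generating (trivial) cofibrations are already spelled out in the stated references, so the actual work concentrates on the two axioms and on cofibrancy of the unit.

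The case of $\mathrm{Ch}_{\mathbb{Q}}$ with the projective model structure is the cleanest. It is a cosmos via tensor/Hom over $\mathbb{Q}$; the generating (trivial) cofibrations are the usual sphere-to-disk inclusions (in all degrees), all with cofibrant source; and the unit $\mathbb{Q}$ is cofibrant as a free module of rank one in degree $0$. Since $\mathbb{Q}$ is a field, every chain complex is flat, so the pushout-product and monoid axioms follow immediately. Stability is automatic from the triangulated structure and is explicitly verified in \cite[Sec.~2.3]{hovey}.

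For $H\mathbb{Q}\text{-}\mathcal{M}od$, I would appeal to Schwede--Shipley's theory of modules over a commutative monoid. The monoid axiom in the base $\mathrm{Sp}^{\Sigma}_{sSet}$ (Lemma~\ref{simplicial-symmetric}) lets \cite[Thm.~4.1(1)]{SSLondon} transfer a cofibrantly generated model structure to $H\mathbb{Q}\text{-}\mathcal{M}od$, with generators of the form $H\mathbb{Q}\wedge i$ for $i$ a generator of $\mathrm{Sp}^{\Sigma}_{sSet}$, whose sources remain cofibrant. The relative smash $\wedge_{H\mathbb{Q}}$ supplies the cosmos structure and the unit $H\mathbb{Q}$ is trivially cofibrant. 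The pushout-product axiom descends from the base, and the monoid axiom for module categories over a commutative monoid is standard: the trivial cofibrations of $H\mathbb{Q}\text{-}\mathcal{M}od$ are built by transfinite composition from smashings of those of $\mathrm{Sp}^{\Sigma}_{sSet}$ with $H\mathbb{Q}$, so smashing with an arbitrary $H\mathbb{Q}$-module factors through the base smash product and inherits the monoid axiom. Stability is inherited since suspension and loops agree with those in the base.

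For $\mathrm{Sp}^{\Sigma}(s\mathrm{Vect}_{\mathbb{Q}})$ and $\mathrm{Sp}^{\Sigma}(\mathrm{ch}^{+}_{\mathbb{Q}})$, the strategy is to invoke Hovey's general theorem \cite{Hovey:symmSpec_modelcats} producing a stable, cofibrantly generated, symmetric monoidal model category of symmetric spectra over a suitable base. Both base categories are cosmos's with cofibrant units satisfying the pushout-product axiom, because working over the field $\mathbb{Q}$ makes everything flat; Hovey's construction yields generators of the familiar form $F_n K$ with $K$ a generator of the base, and these have cofibrant sources. Stability is by construction. The main obstacle, as in the orthogonal spectra case, will be the monoid axiom, which does not transfer automatically through Hovey's construction and has to be checked directly via a filtration argument on transfinite compositions of pushouts along smash products with generating trivial cofibrations. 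Over $\mathbb{Q}$ this ultimately reduces to flatness in the base, and the verification is essentially carried out in Shipley's paper on the stable Dold--Kan correspondence, which treats exactly these two models; I would cite that source rather than rederive the argument.
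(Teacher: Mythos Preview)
Your proposal is correct and lands on the same references, but the paper's proof is considerably more economical: it observes that Shipley's Standing Assumptions~2.4 in \cite{shipley07}, verified for all four categories in her Section~3, already package the cosmos structure, cofibrant generation, stability, cofibrant unit, pushout-product axiom, and monoid axiom in one citation, leaving only the cofibrancy of the sources of the generating (trivial) cofibrations to be checked separately. You instead work through the five properties case by case, re-deriving or re-citing the monoid and pushout-product axioms for each model individually. This is not wrong, and your sketch for $H\mathbb{Q}\text{-}\mathcal{M}od$ and the Hovey-style spectrum categories is accurate, but it duplicates work that Shipley has already done uniformly. The one place where your account and the paper's genuinely converge in content is the verification of cofibrant sources, which both treat case by case; there your arguments match the paper's.
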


\begin{proof} The Standing Assumptions 2.4 of \cite{shipley07}, proved for our four model categories in Section 3, comprise all our  assumptions except the cofibrancy of the sources of the generating (trivial) cofibrations. For $H\mathbb{Q}-\mathcal{M}od$, this can be seen as follows: Generating cofibrations for $H\mathbb{Q}$-modules can be obtained from generating cofibrations in $\mathrm{Sp}^{\Sigma}_{sSet}$ by smashing with $H\mathbb{Q}$ (cf.\ \cite[Lemma~2.3]{SSLondon}). Since these have cofibrant sources and  $H\mathbb{Q}$ is cofibrant, the smash product is cofibrant in $\mathrm{Sp}^{\Sigma}_{sSet}$ and thus also in $H\mathbb{Q}-\mathcal{M}od$ since this has less cofibrations. For $\mathrm{Ch}_{\mathbb{Q}}$, the sources are cofibrant since they are bounded and (trivially) degreewise projective. For the latter two categories, the stable model structures on symmetric spectra have the same cofibrant objects as the projective model structures introduced \cite[Thm.~8.2]{Hovey:symmSpec_modelcats} and the generating cofibrations of these have cofibrant sources since this is true for $s\mathrm{Vect}_{\mathbb{Q}}$ and $\mathrm{ch}^{+}_{\mathbb{Q}}$.
\end{proof}

\subsection{Comparison between different categories of spectra}\label{sec:comparison}

Throughout this subsection, $\A$, $\B$ and $\C$ are discrete\footnote{as opposed to: enriched} categories (cf.\ Remark~\ref{non-discrete}).
Let $(\Scat,\wedge, \sph)$ and $(\T,\otimes, \tph)$ denote categories of spectra, i.\ e., stable model categories satisfying the list of assumptions on p.~\pageref{list:prop_of_spectra}. Let
\[F\colon (\Scat,\wedge, \sph) \rightleftarrows (\T,\otimes, \tph)\colon G\]
be a Quillen equivalence between two categories of spectra, where $F$ is the left adjoint. 
An $(\A,\B)$-bimodule is just a functor in the usual non-enriched sense from $\A\times \Bop$ to $\Scat$, respectively $\T$. We thus have an adjunction
\[F_\ast\colon \mathrm{Fun}(\A\times\Bop, \Scat) \rightleftarrows \mathrm{Fun}(\A\times \Bop, \T) \colon G_\ast\]
which is again a Quillen equivalence \cite[Thm.~11.6.5]{Hirschhorn}. 

Recall the definition of weak and strong monoidal Quillen equivalences from \cite[Sec.~3.2]{SSAGT}: A Quillen equivalence is called strong monoidal if $F$ is strong monoidal and $F(Q\sph)\rightarrow F(\sph)\cong \tph$ is a weak equivalence for the unit $\sph$. It is called weak monoidal if $G$ is lax monoidal, thus $F$ lax comonoidal, such that the maps
\[\nabla\colon F(x\wedge y) \rightarrow F(x)\otimes F(y)\]
are weak equivalences for all cofibrant $x$ and $y$, and the composite
\[F(Q\sph)\rightarrow F(\sph)\rightarrow \tph\]
is a weak equivalence as well. In our case, the unit $\sph$ is cofibrant, so this boils down to the fact that 
$F(\sph)\rightarrow \tph$
is a weak equivalence.

In the case of a strong monoidal Quillen equivalence (which we face for example when comparing symmetric with orthogonal spectra as our underlying cosmos), everything is straightforward. $F_\ast$ commutes with balanced smash products and thus the same holds for  the equivalence of categories  
\begin{align*}\Phi=\Phi_{(\A,\B)}=\mathrm{Ho}(F_\ast)\colon\mathrm{Ho}(\mathrm{Fun}(\A\times \Bop,\Scat))\rightarrow \mathrm{Ho}(\mathrm{Fun}(\A\times \Bop,\T))
\end{align*}
and, consequently, its inverse $\Gamma=\mathrm{Ho}(G_\ast)$. We spell out
the natural isomorphisms:
\begin{align}
\label{Phitensor}    \Phi(X\wedge^L_\B Y) \cong \Phi(X)\otimes^L_\B\Phi(Y)\,,\quad \Gamma(X'\wedge^L_\B Y') \cong \Gamma(X')\otimes^L_\B\Gamma(Y')
\end{align}
as well as 
\begin{align}\label{Phimap}
  R\mathrm{map}_\A(\Phi(X), \Phi(U))\cong \Phi(R\mathrm{map}_\A(X, U))\,,\quad R\mathrm{map}_\A(\Gamma(X'), \Gamma(U'))\cong \Gamma(R\mathrm{map}_\A(X', U'))
\end{align}
-- these come from the adjunction between balanced smash product and mapping spectrum. Similar isomorphisms hold for $R\mathrm{map}_{\B}$.

Now we turn to weak monoidal Quillen equivalences. The comonoidal transformation $\nabla$ induces a commutative diagram
\begin{equation*}
\begin{tikzcd}[row sep=huge]
\bigvee\limits_{b\rightarrow b'} F(X(b')\wedge Y(b)) \arrow[r,yshift=2.5pt]\arrow[r,yshift=-2.5pt] \arrow[d, "\nabla"] & 
\bigvee\limits_{b} F(X(b)\wedge Y(b)) \arrow[d,"\nabla"] 
\\
\bigvee\limits_{b\rightarrow b'} F(X(b'))\otimes F(Y(b)) \arrow[r,yshift=2.5pt]\arrow[r,yshift=-2.5pt] & \bigvee\limits_{b} F(X(b))\otimes F(Y(b))
\end{tikzcd}
\end{equation*}

and thus induces a map on the colimits of the rows. Since $F$ commutes with colimits, we get $\nabla\colon F_\ast(X \wedge_\B Y) \rightarrow F_\ast(X)\otimes_{\B} F_\ast(Y)$.

\begin{prop}\label{nabla}
$\nabla$ is a weak equivalence if $X$ and $Y$ are cofibrant.
\end{prop}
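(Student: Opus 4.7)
I would prove the statement by a transfinite induction on cell presentations of $X$ and $Y$, ultimately reducing to the definition of weak monoidal Quillen equivalence applied to iterated smash products of cofibrant spectra. The base case is that of free cell bimodules. Suppose $X = K\wedge \underline{(a,b)}$ as an $(\A,\B)$-bimodule and $Y = L\wedge \underline{(b',c)}$ as a $(\B,\C)$-bimodule, with $K, L$ cofibrant in $\Scat$ (for instance, a domain or codomain of a generating cofibration). A direct Yoneda computation, analogous to equation~\eqref{box}, yields
\[X\wedge_\B Y \;\cong\; K\wedge \B(b',b)_+\wedge L\wedge \underline{(a,c)},\]
since $\B$ is discrete so $\B(b',b)_+$ is a wedge of copies of $\sph$, hence cofibrant. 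Applying $F$ and unwinding $\nabla$, the comparison map reduces objectwise to the iterated map
\[F\bigl(K\wedge \B(b',b)_+\wedge L\bigr) \longrightarrow F(K)\otimes F(\B(b',b)_+)\otimes F(L),\]
which is a weak equivalence by two applications of the weak monoidal property (using that $\Scat$ is monoidal, so smash products of cofibrant objects remain cofibrant).

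For the inductive step, fix $Y$ cofibrant and present $X$ as a (transfinite) cell complex $X = \mathrm{colim}_{\alpha<\kappa} X_\alpha$ built from generating cofibrations of the form $f\wedge \underline{(a,b)}$. The balanced smash product $-\wedge_\B Y$ is a left Quillen bifunctor by the version of Proposition~\ref{der-minimalist} valid for $\Scat$; in particular it commutes with the cellular colimits and sends cell attachments to cell attachments. The same holds for the corresponding construction in $\T$. Since $F$ is left Quillen, $F_\ast$ commutes with the same colimits and sends cofibrations to cofibrations. At a successor stage, $X_{\alpha+1}\wedge_\B Y$ fits in a pushout along a cofibration whose terms involve $X_\alpha\wedge_\B Y$ and free cell bimodules; $\nabla$ is natural with respect to this pushout, so by the gluing lemma it is a weak equivalence on $X_{\alpha+1}\wedge_\B Y$ once it is on both the previous stage and on the attached cell (which is handled by the base case). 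At limit ordinals one invokes the fact that transfinite compositions of cofibrations preserve weak equivalences in both $\Scat$ and $\T$, exactly as in the proof of Proposition~\ref{resolve_one}. Once this is established for all cofibrant $X$ with $Y$ free, a symmetric induction on $Y$ (with $X$ now arbitrary cofibrant) completes the argument.

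The main obstacle is the careful tracking of cofibrancy and colimit preservation across the Quillen equivalence: one needs that the intermediate objects $X_\alpha\wedge_\B Y$ and $F_\ast(X_\alpha)\otimes_\B F_\ast(Y)$ remain cofibrant and that the attaching maps remain cofibrations after applying $F$, so that the gluing and transfinite composition lemmas apply. This uses the pushout-product axiom on both sides (which is part of the standing assumptions on p.~\pageref{list:prop_of_spectra}), together with the fact that $F$, being left Quillen, sends cofibrations between cofibrant objects to cofibrations between cofibrant objects. With this bookkeeping in place, the statement follows by a clean cellular induction without requiring any deeper homotopical tools.
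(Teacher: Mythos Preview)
Your double induction is essentially correct, but the paper manages with a single induction by a simple observation you missed. In the base case $X=A\wedge\underline{(a,b)}$ the paper does \emph{not} also take $Y$ free; instead it notes that
\[
X\wedge_\B Y \;\cong\; A\wedge \underline{a}\wedge Y(b,-),
\]
and since $Y$ is cofibrant in the projective model structure, $Y(b,-)$ is \emph{objectwise} cofibrant (projective cofibrations are objectwise cofibrations, \cite[Prop.~11.6.3]{Hirschhorn}). Thus the weak monoidal property applies directly to the pair $(A\wedge\underline{a},\,Y(b,-))$ and no second induction on $Y$ is needed. Your route works too, but costs an extra layer of bookkeeping; note also that your phrase ``fix $Y$ cofibrant'' in the inductive step should read ``fix $Y$ free'' to match the logic you spell out afterwards.

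Two smaller points. First, your appeal to Proposition~\ref{resolve_one} for the limit ordinal step is misplaced: that argument uses $h$-cofibrations, which are specific to orthogonal spectra and not part of the standing assumptions on $\Scat$ and $\T$. The paper instead observes that a chain of cofibrations is a cofibrant object in the projective model structure on $\kappa$-sequences, and that the colimit is left Quillen \cite[Thm.~11.6.8]{Hirschhorn}, so it preserves the weak equivalence $\nabla$ between these cofibrant diagrams. Second, for the successor step the paper invokes the Cube Lemma \cite[Lemma~5.2.6]{hovey} rather than a generic ``gluing lemma''; the content is the same, but the hypothesis to check is that both pushout diagrams consist of cofibrant objects with one leg a cofibration, which follows from $F_\ast$ being left Quillen and $\wedge_\B$, $\otimes_\B$ being Quillen bifunctors.
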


\begin{proof}
We first treat the case where $X=A\wedge \underline{(a,b)}$ for some cofibrant spectrum $A$. But then $\nabla$ is isomorphic to 
\[\nabla\colon F(A\wedge \underline{a}\wedge Y(b,-)) \rightarrow F(A\wedge \underline{a})\otimes F(Y(b,-))\]
which is a weak equivalence since $A\wedge \underline{a}$ is objectwise cofibrant by discreteness of $\A$ and $\C$, and $Y(b,-)$ is objectwise cofibrant by \cite[Prop.~11.6.3]{Hirschhorn}. Note the natural isomorphism \[F_\ast(A\wedge \underline{(a,b)})\cong F_\ast\left(\bigvee\limits_{\underline{b}} A\wedge \underline{a}\right)\cong \bigvee\limits_{\underline{b}}F_\ast(A\wedge \underline{a}) \cong F_\ast(A\wedge \underline{a})\otimes \underline{b}\,,\] 
since $F$ commutes with colimits.

In the general case, $X$ is a retract of a (transfinite) cell complex. We may thus assume that $X$ is itself a cell complex. Arguing by transfinite induction, we have to show that the property that $\nabla$ is a weak equivalence is preserved under gluing along coproducts of generating cofibrations and under passage to colimits along cofibrations.

For the first point, we use the first step of the proof and the Cube Lemma \cite[Lemma~5.2.6]{hovey}. The two comparison diagrams consist of cofibrant objects and one cofibration since $F_\ast$ is left Quillen and $\wedge_\B$ and $\otimes_\B$ are Quillen bifunctors, cf.\ the proof of Theorem~\ref{der}. 

For the second point, suppose that we have a chain of cofibrations of some shape $\kappa$. This is a cofibrant diagram in the projective model structure on the functor category of $\kappa$-sequences: The lifting property can be proved by transfinite induction. Since the colimit is a left Quillen functor \cite[Thm.~11.6.8]{Hirschhorn}, it preserves weak equivalences between cofibrant objects.
\end{proof}

With $\Phi$ as above, we get a natural isomorphism in $\mathrm{Ho}(\mathrm{Fun}(\C,\T))$
\[\Phi(X\wedge^L_\B Y) = F_\ast(QX\wedge_\B QY) \xrightarrow[\nabla]{\cong} F_\ast(QX) \otimes_\B F_\ast(QY) \cong \Phi(X)\otimes^L_\B \Phi(Y) \]
and we obtain our desired isomorphisms~\eqref{Phitensor} and, by adjointness,~\eqref{Phimap}.

\begin{rema}\label{non-discrete}
It is important in our discussion that $\A$, $\B$ and $\C$ are discrete categories. In the case of enriched categories, it is already difficult to define what the correct construction of a $\T$-category out of an $\Scat$-category is \cite[Sec.~6]{SSAGT}. We didn't succeed to prove comparison results in this case.
\end{rema}

\subsection{Sequential spectra} \label{sequential}
Sequential spectra do not form a monoidal model category, only a model category tensored and cotensored over spaces. The tensor and cotensor structure can be derived by the same Quillen adjunction argument as in Theorem~\ref{der}. In this case, even Proposition~\ref{resolve_one} may be proved in the same way as above, relying on the same references in \cite{MMSS} as this paper treats sequential and orthogonal spectra uniformly. 

While it is impossible to formulate duality for sequential spectra over $\C$ (using our methods), it \emph{is} possible to write down a homology theory from a sequential $\Cop$-spectrum as in~\eqref{dl}. For this construction, Theorem~\ref{representation} actually holds true as well. To see this, we compare with a Quillen equivalence to orthogonal $\C$-spectra and only have to show that the balanced smash products are translated into one another.

Let $\mathbb{U}_\ast(Y)$ denote the underlying $(\A\times \Bop)$-sequential spectrum of an $(\A\times \Bop)$-orthogonal spectrum $Y$. Let $X$ be an $(\A\times \Bop)$-space. Then there is a tautological isomorphism of sequential spectra
\[\mathbb{U}_\ast(\Sigma^{\infty}X\wedge Y)\cong X\wedge \mathbb{U}_\ast Y\]
inducing the same isomorphism for $\wedge_\B$ instead of $\wedge$ since $\mathbb{U}_\ast$ commutes with colimits. To pass to the derived functor, it suffices to cofibrantly replace $X$ by an argumentation similar to Corollary~\ref{flat}. Thus, we get a natural isomorphism
\[\mathrm{Ho}(\mathbb{U}_\ast)(\Sigma^{\infty}X\wedge^L_\B Y) \cong X\wedge^L_\B \mathrm{Ho}(\mathbb{U}_\ast)(Y)\,.\]
Similarly,
\[\mathrm{Ho}(\mathbb{U}_\ast)(R\mathrm{map}_\A(\Sigma^{\infty}X,U))\cong R\mathrm{map}_\A(X, \mathrm{Ho}(\mathbb{U}_\ast)(U))\]
where we use in the derivation process that the right adjoint $\mathbb{U}_\ast$ preserves fibrant objects.

\section{External Spanier-Whitehead duality}
\label{sec:duality}

We will now set up an external version of Spanier-Whitehead duality which relates finite $\C$-spectra to finite $\Cop$-spectra and which allows us to go back and forth between homology theories on finite $\C$-spectra and cohomology theories on finite $\Cop$-spectra in the proof of Theorem~\ref{representation}.

We will begin by formulating the problem, i.\ e.\, by defining the notion of a dual pair. This is carried out in the context of an arbitrary bicategory in Subsection~\ref{sec:bicategorical}. There are several equivalent formulations of this notion, the equivalence of which is proved in Proposition \ref{I-V}. We will later use this formulation of the problem in the bicategory structure on $\mathrm{DerMod}(\SP)$ discussed in Theorem~\ref{der}. The discussion in the end of Subsection~\ref{sec:bicategorical} uses the symmetry of the bicategory  $\mathrm{DerMod}(\SP)$ and finally the closedness. The closed structure allows us to write down, in Subsection~\ref{spectral-duality}, an \emph{ansatz} for the solution of the above problem: We construct a functor $D$ for which it is plausible that $(X,DX)$ is a dual pair. This approach could in principle be carried out in any closed bicategory, but we only do this in the example of $\mathrm{DerMod}(\SP)$ to simplify the exposition. Finally, we prove in the usual way, using an inductive argument, that finite spectra are dualisable.

\subsection{Bicategorical duality theory}
\label{sec:bicategorical}

The discussion in this subsection is essentially equivalent to \cite[Ch.~16]{may-sig}, slightly simplified for our purposes. Also compare \cite[Ch.~III]{LMS}. We change our standing notation from the last section: In this section, 
 $X$ will always denote an $(\A,\B)$-bimodule, $Y$ a $(\B,\A)$-bimodule, $Z$ a $(\C,\A)$-bimodule, $U$ a $(\B,\C)$-bimodule, $V$ an $(\A,\C)$-bimodule and $W$ a $(\C,\B)$-bimodule. All morphisms between bimodules are morphisms in the homotopy category -- in other words, we are working in the bicategory $\mathrm{DerMod}(\SP)$.

Given a morphism
\[\varepsilon\colon X\wedge^L_\B Y \xrightarrow{(\A,\A)} \A\,,\]
we may define 
\[\varepsilon^1_\ast \colon [W, Z\wedge^L_\A X]_{(\C,\B)} \rightarrow [W\wedge^L_\B Y, Z]_{(\C,\A)}\]
where $\varepsilon^1_\ast(f)$ is the composition
\[W\wedge^L_\B Y \xrightarrow{f\wedge^L_\B Y} Z\wedge^L_\A X \wedge^L_\B Y \xrightarrow{Z\wedge^L_\A\varepsilon} Z\wedge^L_\A  \A \cong Z\,.\]
Similarly, we may define
\[\varepsilon^2_\ast\colon [U, Y\wedge^L_\A V]_{(\B,\C)} \rightarrow [X\wedge^L_\B U, V]_{(\A,\C)}\,.\]
On the other hand, a morphism
\[\eta\colon \B \xrightarrow{(\B,\B)} Y\wedge^L_\A X\]
yields 
\[\eta^1_\ast \colon  [W\wedge^L_\B Y, Z]_{(\C,\A)}\rightarrow [W, Z\wedge^L_\A X]_{(\C,\B)}\]
and
\[\eta^2_\ast\colon  [X\wedge^L_\B U, V]_{(\A,\C)}\rightarrow [U, Y\wedge^L_\A V]_{(\B,\C)}\,.\]

In the following, the letters $\varepsilon$ and $\eta$ are reserved for morphisms with source and target as above. The next proposition is the main point of our discussion of duality since it shows that the notion of a dual pair can equivalently formulated in terms of $\varepsilon$ and $\eta$, or only one of them -- the other one can be recovered uniquely. It is essentially \cite[Thm.~III.1.6]{LMS} or \cite[Prop.~16.4.6]{may-sig}.

\begin{prop} The following data determine one another:  \label{I-V}
\begin{enumerate}
    \item[(I)] morphisms $\varepsilon$ and $\eta$ such that the composition
    \[X \cong X \wedge^L_\B \B \xrightarrow{X\wedge^L_\B \eta} X\wedge^L_B Y \wedge^L_\A X\xrightarrow{\varepsilon \wedge^L_\A X} \A \wedge^L_\A X \cong X \]
    equals $\mathrm{id}_X$ and the composition
    \[Y \cong \B \wedge^L_\B Y \xrightarrow{\eta \wedge^L_\B Y} Y\wedge^L_\A X \wedge^L_\B Y \xrightarrow{Y\wedge^L_\A \varepsilon} Y\wedge^L_\A \A \cong Y\]
    equals $\mathrm{id}_Y$;
    \item[(II)] a morphism $\varepsilon$ such that $\varepsilon^1_\ast$ is a bijection for all $W$ and $Z$;
    \item[(III)] a morphism $\varepsilon$ such that $\varepsilon^2_\ast$ is a bijection for all $U$ and $V$;
    \item[(IV)] a morphism $\eta$ such that $\eta^1_\ast$ is  a bijection for all $W$ and $Z$;
    \item[(V)] a morphism $\eta$ such that $\eta^2_\ast$ is  a bijection for all $W$ and $Z$.
\end{enumerate}
\end{prop}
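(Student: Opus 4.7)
The plan is to prove that (I) is equivalent to each of (II), (III), (IV), (V); since (I) contains the full data, this also shows that in any of (II)--(V) the missing one of $\varepsilon,\eta$ is uniquely recovered. The arguments for (II) and (III) are identical up to swapping the roles of left and right modules (and likewise for (IV) and (V)), so it suffices to treat (I) $\Leftrightarrow$ (II) and (I) $\Leftrightarrow$ (IV).

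For (I) $\Rightarrow$ (II), I construct an explicit two-sided inverse to $\varepsilon^1_\ast$ by
\[\psi \colon [W \wedge^L_\B Y, Z]_{(\C,\A)} \longrightarrow [W, Z \wedge^L_\A X]_{(\C,\B)}, \quad \psi(g) = (g \wedge^L_\A X) \circ (W \wedge^L_\B \eta),\]
using implicitly the unit isomorphism $W \cong W \wedge^L_\B \B$ from Theorem~\ref{der}(a). The verifications $\varepsilon^1_\ast \circ \psi = \mathrm{id}$ and $\psi \circ \varepsilon^1_\ast = \mathrm{id}$ unfold to long composites that, after applying the bicategorical interchange law and the associativity coherences, collapse to $g$ and to $f$ respectively, precisely by the two triangle identities of (I).

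For the converse (II) $\Rightarrow$ (I), I specialise $\varepsilon^1_\ast$ at $W = \B$, $Z = Y$ to obtain a bijection $[\B, Y \wedge^L_\A X]_{(\B,\B)} \cong [Y, Y]_{(\B,\A)}$, and \emph{define} $\eta$ as the unique preimage of $\mathrm{id}_Y$; this immediately gives the second triangle identity. For the first, I specialise again at $W = X$, $Z = \A$ to obtain an injection $[X, \A \wedge^L_\A X]_{(\A,\B)} \to [X \wedge^L_\B Y, \A]_{(\A,\A)}$, and show that both $\mathrm{id}_X$ and the candidate composite $(\varepsilon \wedge^L_\A X) \circ (X \wedge^L_\B \eta)$ map to $\varepsilon$. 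The computation for the composite uses the interchange law to swap $\varepsilon$ past $\eta$ and then applies the second triangle identity (now in hand) to collapse the inner expression to $\mathrm{id}_Y$. Injectivity then forces the first triangle identity. The proof of (IV) $\Rightarrow$ (I) is entirely parallel, now defining $\varepsilon$ as the unique preimage of $\mathrm{id}_X$ under a suitable specialisation of $\eta^1_\ast$ and deriving the remaining triangle identity by the same specialise-then-inject trick.

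The main obstacle is purely combinatorial: one must chase a handful of long composites of $1$- and $2$-morphisms through the interchange law $(f \wedge^L g) \circ (f' \wedge^L g') = (f \circ f') \wedge^L (g \circ g')$ and the unit/associativity coherences of Theorem~\ref{der}(a), (b), applying a triangle identity at the right moment. No homotopical input beyond the bicategory axioms for $\mathrm{DerMod}(\SP)$ enters; this is why the argument is valid verbatim in any bicategory and is essentially \cite[Thm.~III.1.6]{LMS} or \cite[Prop.~16.4.6]{may-sig}.
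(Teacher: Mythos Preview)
Your proposal is correct and follows essentially the same route as the paper's proof: the paper likewise shows that $\eta^1_\ast$ is an explicit inverse to $\varepsilon^1_\ast$ under (I), and for (II) $\Rightarrow$ (I) defines $\eta$ as the preimage of $\mathrm{id}_Y$ under $\varepsilon^1_\ast$ at $W=\B$, $Z=Y$, then obtains the remaining triangle identity by the same specialise-and-compare-images-under-$\varepsilon^1_\ast$ argument you describe. The only cosmetic difference is that the paper phrases the last step as first showing $\varepsilon^1_\ast\eta^1_\ast=\mathrm{id}$ for all $W,Z$ and then reading off the first triangle identity as the equality $\eta^1_\ast(\varepsilon)=\varphi_X^{-1}$, which amounts to exactly your computation.
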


\begin{proof}
If $\varepsilon$ and $\eta$ as in (I) are given, then a direct check reveals that $\varepsilon^1_\ast$ and $\eta^1_\ast$ are inverse bijections, as are $\varepsilon^2_\ast$ and $\eta^2_\ast$. Thus we recover (II) through (V). We now show how to recover (I) from (II), with the proceeding starting from another point being analogous.

Suppose that $\varepsilon^1_\ast$ is always a bijection. With $\C=\B$, $W=\B$ and $Z=Y$, we get an isomorphism
\[\varepsilon^1_\ast\colon [\B, Y\wedge^L_\A X]_{(\B,\B)} \rightarrow [\B\wedge^L_\B Y, Y]_{(\B, \A)}\,.\]
Choosing $\eta$ as the preimage of the canonical isomorphism $\B \wedge^L_\B Y \cong Y$, we get the second of the two compositions in (I) to equal $\mathrm{id}_Y$. Note that we have no other choice for $\eta$ if we want (I) to hold. Moving on, note that $\varepsilon^1_\ast \eta^1_\ast$ is the identity for all $W$ and $Z$. Since $\varepsilon^1_\ast$ is a bijection, this exhibits $\eta^1_\ast$ as a bijection as well and implies that the other composition $\eta^1_\ast \varepsilon^1_\ast$ also equals the identity. Now, the first composition in (I), viewed as a morphism $X\rightarrow \A \wedge^L_\A X$ (i.\ e., forget the last canonical isomorphism $\varphi_X$), equals $\eta^1_\ast(\varepsilon)$, so its image under $\eta^1_\ast$ equals $\varepsilon$. But the same is true for $\varphi_X^{-1}$, so the two are equal.

It is obvious that the presented constructions are inverse to each other -- one way, we forgot about $\eta$, and going back, we had a unique choice for $\eta$.\end{proof}

\begin{rema}\label{adjoint-1-morphism}
Condition (I) says that $(X,Y)$ is an adjoint pair in the sense of adjointness between $1$-morphisms in bicategories \cite[Def.~7.7.2]{borceux1}. 
\end{rema}

\begin{defi}
$(X,Y;\varepsilon, \eta)$ -- equivalently $(X,Y;\varepsilon)$ or $(X,Y;\eta)$ -- is called a \emph{dual pair} of bimodules if the equivalent conditions of Proposition~\ref{I-V} hold.
\end{defi}

Note that we can omit one of $\varepsilon$ and $\eta$ from the quadruple $(X,Y;\varepsilon, \eta)$, but not both: for instance, $\varepsilon$ is not uniquely determined by $X$ and $Y$, since we might change it by an automorphism of its source or target.

\begin{rema}\label{non-symm}
The discussion above is not symmetric in $\A$ and $\B$. We could equally well have formulated a second kind of duality where we interchanged the role of the source and target of a $1$-morphism, as well as the order of the composition (i.~e.\, balanced smash product) everywhere. This would have given a \emph{different} notion of duality with \emph{different} dual pairs.
\end{rema}

The bicategory $\mathrm{DerMod}(\SP)$ has a special kind of symmetry available: By definition, there is a canonical isomorphism of categories between $(\A,\B)$-bimodules and $(\Bop,\Aop)$-bimodules which we denote by
\[X\mapsto X^{\mathrm{op}}\,.\]
This assignment is involutive, and we have canonical isomorphisms
\[\gamma\colon \left(X\wedge^L_\B Y \xrightarrow{\cong}\right)^{\mathrm{op}} \xrightarrow{\cong} Y^{\mathrm{op}} \wedge^L_{\Bop} X^{\mathrm{op}}\]
of $(\A,\C)$-bimodules, and
\[\delta\colon \left(\mathrm{id}_\A\right)^{\mathrm{op}} \xrightarrow{\cong} \mathrm{id}_{\Aop}\]
of $(\Aop,\Aop)$-bimodules. 

\begin{rema}
In the language of \cite[Sec.~16.2]{may-sig}, this refers to the fact that  $\mathrm{DerMod}(\SP)$ is a symmetric bicategory, with involution $\A \mapsto \Aop$.
\end{rema}

In this notation, Remark~\ref{non-symm} says that the fact that $(X,Y;\varepsilon,\eta)$ is a dual pair is \emph{not} equivalent to the fact that $(X^{\mathrm{op}}, Y^{\mathrm{op}}; \varepsilon', \eta')$ is a dual pair for some $\varepsilon'$ and $\eta'$. However, there is the following tautological observation which we will use later:

\begin{prop}\label{duality}
$(X,Y;\varepsilon, \eta)$ is a dual pair if and only if the pair \goodbreak $(Y^{\mathrm{op}},X^{\mathrm{op}};\delta\varepsilon ^{\mathrm{op}}\gamma^{-1},\gamma\eta^{\mathrm{op}}\delta^{-1})$ is.
\end{prop}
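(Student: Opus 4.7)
My plan is to verify condition~(I) of Proposition~\ref{I-V} for the new tuple. The key point is that $(-)^{\mathrm{op}}$, together with the coherence isomorphisms $\gamma$ and $\delta$, is an involution of the bicategory $\mathrm{DerMod}(\SP)$ that sends an $(\A,\B)$-bimodule to a $(\Bop,\Aop)$-bimodule, reverses the order of $\wedge^L$, and is the identity on $2$-morphisms. Consequently, applying $(-)^{\mathrm{op}}$ to a commutative diagram over $\A,\B$ yields, after inserting $\gamma$'s and $\delta$'s, a commutative diagram over $\Aop,\Bop$.

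Concretely, I would apply $(-)^{\mathrm{op}}$ to the first triangle identity of~(I),
\[X \cong X\wedge^L_\B \B \xrightarrow{X\wedge^L_\B\eta} X\wedge^L_\B Y\wedge^L_\A X \xrightarrow{\varepsilon\wedge^L_\A X} \A\wedge^L_\A X \cong X,\]
(equal to $\mathrm{id}_X$) and then insert the isomorphisms $\gamma$, $\delta$ at each balanced smash product and each identity $1$-morphism. Naturality of $\gamma$ with respect to the $2$-cells $\eta^{\mathrm{op}}$ and $\varepsilon^{\mathrm{op}}$ then rewrites the result as
\[X^{\mathrm{op}} \cong \Bop \wedge^L_{\Bop} X^{\mathrm{op}} \xrightarrow{\eta' \wedge^L_{\Bop} X^{\mathrm{op}}} X^{\mathrm{op}}\wedge^L_{\Aop}Y^{\mathrm{op}}\wedge^L_{\Bop}X^{\mathrm{op}} \xrightarrow{X^{\mathrm{op}}\wedge^L_{\Aop}\varepsilon'} X^{\mathrm{op}}\wedge^L_{\Aop}\Aop \cong X^{\mathrm{op}},\]
where $\varepsilon'=\delta\varepsilon^{\mathrm{op}}\gamma^{-1}$ and $\eta'=\gamma\eta^{\mathrm{op}}\delta^{-1}$. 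Since $(\mathrm{id}_X)^{\mathrm{op}}=\mathrm{id}_{X^{\mathrm{op}}}$, this is precisely the second triangle identity (about the $1$-morphism $X^{\mathrm{op}}$) for the tuple $(Y^{\mathrm{op}},X^{\mathrm{op}};\varepsilon',\eta')$. Dualising the other triangle identity in the same way yields the remaining triangle identity (about $Y^{\mathrm{op}}$), so condition~(I) holds for the new tuple.

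The converse is then automatic: since $(-)^{\mathrm{op}}$ is involutive and $\gamma,\delta$ square to the canonical identifications $X^{\mathrm{opop}}=X$ and $\A^{\mathrm{opop}}=\A$, repeating the construction recovers $(X,Y;\varepsilon,\eta)$. The only genuine work in the proof, which I expect to be the main but mild obstacle, is the bookkeeping needed to see that the various insertions of $\gamma$ and $\delta$ reassemble into the new triangle identity -- this is ultimately a consequence of the coherence axioms of the symmetric bicategory $\mathrm{DerMod}(\SP)$, and is what allows the author to call the proposition ``tautological''.
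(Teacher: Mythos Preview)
Your approach is correct and matches the paper's: the author simply notes that the statement is trivial for condition~(I) of Proposition~\ref{I-V}, which is exactly what you verify in detail by applying $(-)^{\mathrm{op}}$ to the triangle identities and inserting the coherence isomorphisms $\gamma,\delta$.
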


\begin{proof}
Trivial for condition (I) of Proposition~\ref{I-V}.
\end{proof}

\begin{prop}\label{dual_of_composition}
If $(X,Y;\varepsilon, \eta)$ and $(U,W;\zeta,\theta)$ are dual pairs, then so is $(X\wedge_\B U, W\wedge_\B Y; \nu,\xi)$
where $\nu$ is the composition
\[X\wedge^L_\B U\wedge^L_\C W\wedge^L_\B Y\xrightarrow{X\wedge^L_\B \zeta\wedge^L_\B Y} X\wedge^L_\B \B \wedge^L_\B Y \cong X\wedge^L_\B Y \xrightarrow{\varepsilon} \A\]
and $\xi$ is defined similarly.
\end{prop}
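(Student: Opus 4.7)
The plan is to verify condition (III) of Proposition~\ref{I-V} for the prospective dual pair $(X\wedge^L_\B U, W\wedge^L_\B Y;\nu)$: for all $(\C,\D)$-bimodules $\tilde U$ and all $(\A,\D)$-bimodules $V$ (with $\D$ an arbitrary fourth object), I need to show that the map
\[\nu^2_\ast\colon [\tilde U,(W\wedge^L_\B Y)\wedge^L_\A V]_{(\C,\D)}\longrightarrow [(X\wedge^L_\B U)\wedge^L_\C\tilde U, V]_{(\A,\D)}\]
is a bijection. First I unfold $\nu$ using the formula from the statement and show by a direct computation that $\nu^2_\ast$ factors -- up to the associativity isomorphisms of $\mathrm{DerMod}(\SP)$ -- as the composite
\[[\tilde U,(W\wedge^L_\B Y)\wedge^L_\A V]_{(\C,\D)} \cong [\tilde U,W\wedge^L_\B(Y\wedge^L_\A V)]_{(\C,\D)} \xrightarrow{\zeta^2_\ast} [U\wedge^L_\C\tilde U, Y\wedge^L_\A V]_{(\B,\D)} \xrightarrow{\varepsilon^2_\ast} [X\wedge^L_\B U\wedge^L_\C\tilde U, V]_{(\A,\D)}\,.\]
Since both $\zeta^2_\ast$ and $\varepsilon^2_\ast$ are bijections by the hypothesis on $(U,W;\zeta,\theta)$ and $(X,Y;\varepsilon,\eta)$ respectively, so is $\nu^2_\ast$.

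The verification that the above factorisation actually computes $\nu^2_\ast$ is a direct diagram chase on a class representative $g\colon \tilde U \to (W\wedge^L_\B Y)\wedge^L_\A V$, using the explicit formula for $\nu$ together with the naturality of $\wedge^L$ and of the adjunction isomorphisms from Theorem~\ref{der}. The only genuine obstacle is coherent bookkeeping of the associators and unitors in the balanced smash products, which is routinely absorbed by the bicategorical coherence theorem.

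For the reader who prefers it, an equally valid but notationally heavier alternative is to verify condition (I) of Proposition~\ref{I-V} directly. For this one writes down the expected partner of $\nu$,
\[\xi\colon \C\xrightarrow{\theta} W\wedge^L_\B U\cong W\wedge^L_\B \B\wedge^L_\B U \xrightarrow{W\wedge^L_\B\eta\wedge^L_\B U}(W\wedge^L_\B Y)\wedge^L_\A(X\wedge^L_\B U)\,,\]
and factors each of the two resulting triangle composites, via the interchange law, into the whiskered pair of triangle identities for $(X,Y;\varepsilon,\eta)$ and $(U,W;\zeta,\theta)$ separately; each of the latter is the identity by hypothesis. Either way, the statement is the classical bicategorical fact that adjoint $1$-morphisms compose, cf.\ Remark~\ref{adjoint-1-morphism} and \cite[Ch.~16]{may-sig}.
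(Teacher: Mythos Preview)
Your proof is correct. The paper's own proof is a one-liner: it simply verifies condition (I) of Proposition~\ref{I-V} directly, i.e.\ checks the two triangle identities for $(\nu,\xi)$ by nesting the triangle identities for $(\varepsilon,\eta)$ and $(\zeta,\theta)$ --- exactly what you describe as your ``alternative''. Your primary route via condition (III), factoring $\nu^2_\ast$ as $\varepsilon^2_\ast\circ\zeta^2_\ast$ (up to associators), is equally valid and arguably cleaner to write out since one avoids drawing the nested triangle diagrams; both are manifestations of the standard bicategorical fact that adjoint $1$-morphisms compose.
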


\begin{proof}
The proof is trivial for condition (I), cf.\ \cite[Thm.~16.5.1]{may-sig}. 
\end{proof}

The following two propositions use the closedness of $\mathrm{DerMod}(\SP)$. They are essentially Propositions 16.4.13 and 16.4.12 of \cite{may-sig}.

\begin{prop}\label{pull_out_map}
If $(X,Y;\varepsilon)$ is a dual pair, then we have the following natural isomorphisms:
\begin{align} \label{pull1}Z\wedge^L_\A X &\xrightarrow[\cong]{(\C,\B)} R\mathrm{map}_{\Aop}(Y,Z)\\
\label{pull2}Y\wedge^L_\A V &\xrightarrow[\cong]{(\B,\C)} R\mathrm{map}_{\A}(X, V)\,,
\end{align}
and
\begin{align}\label{dual_unique}
    Y \cong R\mathrm{map}_\A(X,\A)\,.
\end{align}
\end{prop}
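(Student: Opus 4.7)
The plan is to prove each of the three isomorphisms via the Yoneda lemma in the relevant homotopy category of bimodules. For \eqref{pull1}, I would show that for every $(\C,\B)$-bimodule $W$ there is a bijection
\[[W, R\mathrm{map}_{\Aop}(Y,Z)]_{(\C,\B)} \xrightarrow{\cong} [W, Z \wedge^L_\A X]_{(\C,\B)}\]
natural in $W$; Yoneda then produces a unique isomorphism $Z \wedge^L_\A X \cong R\mathrm{map}_{\Aop}(Y,Z)$ in $\DM{\C}{\B}$ inducing it.

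The bijection is obtained by composing two already available isomorphisms. Relabelling $(\A, \B, \C)$ as $(\C, \B, \A)$ in Theorem~\ref{der}(c) yields the adjunction isomorphism
\[[W \wedge^L_\B Y, Z]_{(\C,\A)} \cong [W, R\mathrm{map}_{\Aop}(Y,Z)]_{(\C,\B)}.\]
On the other hand, since $(X,Y;\varepsilon,\eta)$ is a dual pair, Proposition~\ref{I-V}(IV) gives the bijection
\[\eta^1_\ast \colon [W \wedge^L_\B Y, Z]_{(\C,\A)} \xrightarrow{\cong} [W, Z \wedge^L_\A X]_{(\C,\B)},\]
and naturality in $W$ is immediate from the explicit definition of $\eta^1_\ast$ as pre-composition with $\eta$ followed by composition with $\varepsilon$.

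The second isomorphism \eqref{pull2} is proved by the exactly symmetric argument, now using the other half of Theorem~\ref{der}(c),
\[[X \wedge^L_\B U, V]_{(\A,\C)} \cong [U, R\mathrm{map}_\A(X,V)]_{(\B,\C)},\]
together with the bijection $\eta^2_\ast$ from Proposition~\ref{I-V}(V), tested against arbitrary $(\B,\C)$-bimodules $U$. Finally, \eqref{dual_unique} drops out as the special case of \eqref{pull2} with $\C = \A$ and $V = \A$, combined with the unit isomorphism $Y \wedge^L_\A \A \cong Y$ from Theorem~\ref{der}(a).

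I do not expect a substantive obstacle: once Theorem~\ref{der} and Proposition~\ref{I-V} are in hand, the statement is a formal consequence of the closed bicategory structure on $\mathrm{DerMod}(\SP)$. The only point that might deserve a moment's verification is that the resulting Yoneda isomorphism is natural not only in the probe variable $W$ (respectively $U$) but also in the auxiliary variables $Z$ and $V$, which is again automatic from the naturality of both constituent bijections.
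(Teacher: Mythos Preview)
Your proposal is correct and follows essentially the same route as the paper: combine the adjunction isomorphism from Theorem~\ref{der}(c) with one of the equivalent bijections from Proposition~\ref{I-V}, then invoke the ordinary Yoneda lemma; the third isomorphism is the special case $\C=\A$, $V=\A$ of the second. The only cosmetic difference is that the paper cites condition~(II) (the bijection $\varepsilon^1_\ast$) whereas you cite its inverse $\eta^1_\ast$ from condition~(IV), which is immaterial since Proposition~\ref{I-V} makes them equivalent. One minor slip: your description of $\eta^1_\ast$ as ``pre-composition with $\eta$ followed by composition with $\varepsilon$'' is not quite right---the map $\eta^1_\ast$ involves only $\eta$ (it sends $g$ to $(g\wedge^L_\A X)\circ(W\wedge^L_\B\eta)$ modulo unit isomorphisms)---but this does not affect the argument.
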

\begin{proof}
For the first two isomorphisms, use condition (II) and Theorem~\ref{der} (c) -- and the (usual form of the) Yoneda lemma. Setting $\C=\A$ and $V=\A$ in~\eqref{pull2} yields~\eqref{dual_unique}.
\end{proof}

\subsection{External duality for $(\A,\B)$-spectra}\label{spectral-duality}

Considering Equation~\eqref{dual_unique} of Proposition~\ref{pull_out_map} above, we will now reverse the logic, define $Y$ as $DX$ and check when this yields a dual pair.

\begin{defi}\label{SW_dual}
    For an $(\A,\B)$-spectrum $X$, define the \emph{dual of $X$} to be the $(\B,\A)$-spectrum
    \[DX=D_{(\A,\B)}X= R\mathrm{map}_{\A}(X,\mathcal{A})\,.\]
\end{defi}

\begin{rema}
The notation $D_{(\A,\B)}$ above should draw the reader's attention to the fact that the dual of an $(\A,\B)$-spectrum depends on the pair $(\A,\B)$, and not only on the indexing category $\A\wedge \Bop$. However, we will only write $D$ from now on.
\end{rema}

\begin{rema}
If we are sloppy for the moment and ignore the derivation process, we may think of $D$ as given by the formula
\[DX(c) = \mathrm{map}_{\C}(X(-), \C(c,-))\,.\]
\end{rema}

We have the evaluation map 
\begin{align*} 
\varepsilon_X\colon X\wedge^L_{\B} DX \cong R\mathrm{map}_{\A}(\A,X)\wedge^L_{\B} R\mathrm{map}_{\A}(X,\A)\xrightarrow{(\A,\A)} \A\,.
\end{align*}

\begin{defi}
    $X$ is called \emph{dualisable} if $(X,DX; \varepsilon_X)$ is a dual pair, i.\ e. if the map $(\varepsilon_X)_1^\ast$ from Proposition~\ref{I-V} is a bijection for all $W$ and $Z$.
\end{defi}

$\varepsilon_X$ has the following naturality property: For every morphism $f\colon X\rightarrow X'$ in $\DM{\A}{\B}$, the diagram
\begin{equation*}
\begin{tikzcd}[row sep=large]
X\wedge^L_\B DX' \arrow[r, "f\wedge^L_\B \mathrm{id}"] \arrow[d, "\mathrm{id}\wedge^L_\B Df"] & X' \wedge^L_{\B} DX' \arrow[d, "\varepsilon_{X'}"]\\
X\wedge^L_\B DX \arrow[r, "\varepsilon_X"] & \A
\end{tikzcd}
\end{equation*}
commutes.
It follows that for all $W$ and $Z$ (which we consider fixed from now on), 
\[(\varepsilon_X)^1_\ast\colon [W, Z\wedge^L_\A X]_{(\C,\B)} \rightarrow [W\wedge^L_\B DX, Z]_{(\C,\A)}\] is a natural transformation.

Recall that an exact functor between triangulated categories is a functor which commutes with the shift functor and sends distinguished triangles to distinguished triangles. 
If $\mathscr{S}$ is a triangulated category, then $\mathscr{S}^{\mathrm{op}}$ becomes a triangulated category with shift functor the opposite of $\Sigma^{-1}$, abusively denoted by $\Sigma^{-1}$ again, where a triangle
\[X\rightarrow Y \rightarrow Z \rightarrow \Sigma^{-1}X\]
is distinguished if and only if
\[\Sigma^{-1}X\rightarrow Z \rightarrow Y \rightarrow X \]
is distinguished in $\mathscr{S}$.

\begin{lemma}\label{D:exact}
(a) $D\colon (\DM{\A}{\B})^{\mathrm{op}}\rightarrow \DM{\B}{\A}$ is an exact functor.\\
(b) $X$ is dualisable if and only if $\Sigma X$ is.\\
(c) If
$X\rightarrow X' \rightarrow X'' \rightarrow \Sigma X$
is a distinguished triangle and $X$ and $X'$ are dualisable, then so is $X''$.
\end{lemma}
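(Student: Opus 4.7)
All three parts rely on the bicategorical set-up of Theorem~\ref{der} together with the general fact that in a stable model category, the total derived functor of a right Quillen bifunctor is exact in each variable.

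For~(a), I would invoke this general fact for the adjunction of two variables identified in the proof of Theorem~\ref{der}: it directly shows that $D=R\mathrm{map}_{\A}(-,\A)$ sends distinguished triangles to distinguished triangles and reverses the shift, giving in particular a natural isomorphism $D(\Sigma X)\cong \Sigma^{-1}DX$. Concretely, one represents a distinguished triangle $X\to X'\to X''\to\Sigma X$ by a cofiber sequence of cofibrant bimodules; applying $\mathrm{map}_{\A}(-,R\A)$ produces a fiber sequence of fibrant spectra, which is also a cofiber sequence by stability.

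For~(b), I would combine the isomorphism $D(\Sigma X)\cong \Sigma^{-1}DX$ from (a) with the compatibility of $\wedge^{L}_{\B}$ with $\Sigma$ to identify $[W,Z\wedge^{L}_{\A}\Sigma X]\cong [\Sigma^{-1}W,Z\wedge^{L}_{\A}X]$ and $[W\wedge^{L}_{\B}D(\Sigma X),Z]\cong [\Sigma^{-1}W\wedge^{L}_{\B}DX,Z]$. A short diagram chase tracking $\varepsilon_{\Sigma X}$ back to $\varepsilon_{X}$ then shows that $(\varepsilon_{\Sigma X})^{1}_{\ast}$ corresponds to $(\varepsilon_{X})^{1}_{\ast}$ with $W$ replaced by $\Sigma^{-1}W$; since both dualisability conditions are universally quantified in $W$ and $Z$, the equivalence follows.

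For~(c), I would run the standard five-lemma argument in a triangulated category. The distinguished triangle $X\to X'\to X''\to\Sigma X$ yields, via the exactness of $Z\wedge^{L}_{\A}-$ and the cohomological property of $[W,-]_{(\C,\B)}$, a long exact sequence of the domains of the $(\varepsilon_{Y})^{1}_{\ast}$. By~(a), $D$ sends the original triangle to a distinguished triangle $\Sigma^{-1}DX\to DX''\to DX'\to DX$ in $\DM{\B}{\A}$; smashing with $W$ over $\B$ and then applying $[-,Z]_{(\C,\A)}$ produces a second long exact sequence of the targets. Naturality of $\varepsilon$ (recorded after Definition~\ref{SW_dual}) assembles these into a commutative ladder whose rungs are the $(\varepsilon_{Y})^{1}_{\ast}$ for $Y$ running over $X$, $X'$, $X''$, and their shifts. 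By hypothesis and~(b), the rungs at $X$, $X'$, $\Sigma X$, $\Sigma X'$ are bijections, so the five lemma forces the rung at $X''$ to be a bijection as well. The only genuinely fiddly point is verifying that the squares straddling the connecting morphisms of the two long exact sequences commute; this is where the compatibility of $\varepsilon$ with the triangulated structures transported by $D$ and by $-\wedge^{L}_{\B}-$ enters, and it requires some careful bookkeeping with the distinguished triangles of Theorem~\ref{der}, though no substantially new input.
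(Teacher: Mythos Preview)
Your proposal is correct and follows essentially the same route as the paper: part~(a) via the exactness of the derived right adjoint (the paper spells out the cofibrant-model/fiber-sequence argument you sketch), part~(b) via the identification $D(\Sigma X)\cong\Sigma^{-1}DX$ and a square relating $(\varepsilon_{\Sigma X})^{1}_{\ast}$ to $(\varepsilon_{X})^{1}_{\ast}$, and part~(c) via the five-lemma on the ladder of long exact sequences. The only cosmetic difference is that in~(b) the paper absorbs the shift into $Z$ (replacing $Z$ by $\Sigma Z$) rather than into $W$ as you do; both work for the same reason, since the dualisability condition quantifies over all $W$ and $Z$.
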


\begin{proof} (a): By Theorem~\ref{der} (e),
\[D(\Sigma X) = R\mathrm{map}_\A(\sph \wedge^L X,\A) \cong R\mathrm{map}(\sph, DX)\cong \Sigma^{-1}DX\,.\]
To show that $D$ preserves cofiber sequences, we may assume that our cofiber sequence is of the form \[X\xrightarrow{f} Y\rightarrow Cf \rightarrow \Sigma X\]
with $X$ and $Y$ cofibrant and $f$ a cofibration. By using the explicit cofibrant models and the properties of (underived) mapping spectra, cf.\ Subsection~\ref{sec:mod}, the image of the sequence under $D$ is identified with the  sequence
\[\Omega DX \rightarrow \mathrm{hofib}(Df) \rightarrow DY\xrightarrow{Df} DX\]
which is a fiber sequence in the sense of \cite[Def.~6.2.6]{hovey}. But fiber and cofiber sequences coincide in a stable model category by \cite[Thm.~7.1.11]{hovey}. 

(b): There is a commutative diagram
\begin{equation*}
\begin{tikzcd}[row sep=large]
{[W,Z\wedge^L_\A \Sigma X]_{(\C,\B)}} \arrow[r, "(\varepsilon_{\Sigma X})^1_\ast"] \arrow[d,"\cong"] & {[W\wedge^L_\B D(\Sigma X),Z]_{\C,\A}} \arrow[d, "\cong"]\\
{[W,\Sigma Z\wedge^L_\A X]_{(\C,\B)}} \arrow[r, "(\varepsilon_X)^1_\ast"] & {[W\wedge^L_\B DX,\Sigma Z]_{\C,\A}}
\end{tikzcd}
\end{equation*}
where the vertical arrows are the isomorphisms from Theorem~\ref{der} (b) and (c), and the right one uses in addition the isomorphisms $\Omega E\cong \Sigma^{-1}\sph\wedge E$ and \goodbreak {$R\mathrm{map}(\Sigma^{-1}\sph,F)\cong \Sigma F$} in $\SHC$.

(c): Fix $W$ and $Z$. Note that
$Z\wedge^L_\A-$ and $W\wedge^L_\B-$ preserve distinguished triangles since they are left adjoints. By equation~\eqref{homological_sequence} on p.~\pageref{homological_sequence}, the rows of the following ladder are exact:
{\tiny
\begin{equation*}
\begin{tikzcd}[column sep=small]
{[W, Z\wedge^L_\A X]_{(\C,\B)} }\arrow[r] \arrow[d, "(\varepsilon_X)^1_\ast"] & {[W, Z\wedge^L_\A X']_{(\C,\B)}} \arrow[r] \arrow[d, "(\varepsilon_{X'})^1_\ast"]& {[W, Z\wedge^L_\A X'']_{(\C,\B)}} \arrow[r] \arrow[d, "(\varepsilon_{X''})^1_\ast"] & {[W, Z\wedge^L_\A \Sigma X]_{(\C,\B)}} \arrow[r] \arrow[d, "(\varepsilon_{\Sigma X})^1_\ast"]& {[W, Z\wedge^L_\A \Sigma X']_{(\C,\B)}} \arrow[d, "(\varepsilon_{\Sigma X'})^1_\ast"] \\
{[W\wedge^L_\B DX,Z]_{(\C,\A)}} \arrow[r] & {[W\wedge^L_\B DX',Z]_{(\C,\A)}} \arrow[r] & {[W\wedge^L_\B DX'',Z]_{(\C,\A)}} \arrow[r]& {[W\wedge^L_\B \Sigma DX,Z]_{(\C,\A)}} \arrow[r] & {[W\wedge^L_\B \Sigma DX',Z]_{(\C,\A)}}\,.
\end{tikzcd}
\end{equation*}
}
The statement is now deduced via the five-lemma.
\end{proof}

From now on, assume that 
\begin{quote}
    (FM) The mapping spectra of $\B$ are finite CW-spectra.
\end{quote}

In our applications, $\B$ will always be the trivial category $\ast$ with mapping spectrum $\sph$. 

\begin{lemma}\label{dual_of_rep}
If condition (FM) holds, then every $(\A,\B)$-spectrum of the form $\underline{(a,b)}$ is dualisable.
\end{lemma}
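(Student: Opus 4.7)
The natural candidate for the dual of $\underline{(a,b)}$ is the $(\B,\A)$-bimodule $Y$ given by
\[Y(b',a') = D_{\SP}\B(b',b)\wedge \A(a',a),\]
where $D_{\SP}K := R\mathrm{map}(K,\sph)$ denotes classical Spanier--Whitehead duality in $\SHC$. This makes sense because (FM) ensures that each $\B(b',b)$ is a finite CW-spectrum and hence dualisable in $\SHC$.

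The plan is to verify condition (III) of Proposition~\ref{I-V}. Combining the adjunction of Theorem~\ref{der}(c) with the Yoneda lemma in $\SHCC$, that condition is equivalent to asking that the natural map
\[\psi\colon Y\wedge^L_\A V\longrightarrow R\mathrm{map}_\A(\underline{(a,b)},V),\]
adjoint to the candidate evaluation $\underline{(a,b)}\wedge^L_\B Y\wedge^L_\A V\to V$, be an isomorphism for all $\C$ and all $(\A,\C)$-bimodules $V$.

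I would compute both sides objectwise by decomposing $\underline{(a,b)}\cong\underline{a}\wedge_\ast\underline{b}$ as the external smash of the $(\A,\ast)$-bimodule $\underline{a}=\A(a,-)$ with the $(\ast,\B)$-bimodule $\underline{b}=\B(-,b)$, and correspondingly $Y\cong D\underline{b}\wedge_\ast\A(-,a)$, where $D\underline{b}$ is the $(\B,\ast)$-bimodule $b'\mapsto D_{\SP}\B(b',b)$. Combining the associativity of $\wedge^L$ (Theorem~\ref{der}(b)), the adjunction of Theorem~\ref{der}(e), and the derived Yoneda lemma (Lemma~\ref{derYoneda}) applied in the $\A$-direction yields
\[R\mathrm{map}_\A(\underline{(a,b)},V)(b',c)\cong R\mathrm{map}(\B(b',b),V(a,c))\]
and
\[(Y\wedge^L_\A V)(b',c)\cong D_{\SP}\B(b',b)\wedge V(a,c).\]
Under these identifications, $\psi$ is pointwise at $(b',c)$ the classical Spanier--Whitehead evaluation map for $\B(b',b)$, which is an isomorphism by (FM).

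The main obstacle will be the careful bookkeeping needed to track bimodule structures through the iterated Yoneda reductions and to verify that the map induced by the bicategorical $\varepsilon$ coincides, levelwise, with the classical duality map. Once this is settled, the lemma reduces to the single classical fact about dualisability of $\B(b',b)$ in $\SHC$ that (FM) provides; note that no analogous hypothesis on $\A$ is required, since the $D$ construction absorbs the $\A$-side via Yoneda.
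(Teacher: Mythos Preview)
Your argument is correct and lands on the same endpoint as the paper, but the route differs. Both you and the paper use the splitting $\underline{(a,b)}\cong \underline{a}\wedge^L_\ast \underline{\underline{b}}$ into an $(\A,\ast)$-bimodule and a $(\ast,\B)$-bimodule. The paper then proceeds modularly: it first proves the case $\B=\ast$ by checking condition (II) directly (showing $D\underline{a}\cong\underline{\underline{a}}$ via Lemma~\ref{derYoneda} and identifying $\varepsilon$ with composition in $\A$), observes that $\underline{\underline{b}}$ is dualisable in $\SHC$ by (FM), and then invokes Proposition~\ref{dual_of_composition} to compose the two dualities. You instead write down the composite dual $Y$ explicitly and verify condition (III) in one shot, reducing the map $\psi$ objectwise to the classical Spanier--Whitehead evaluation for $\B(b',b)$ via Theorem~\ref{der}(b),(e) and Lemma~\ref{derYoneda}.

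Your approach trades the abstract compositionality lemma for a direct computation; the price is exactly the bookkeeping you flag, namely checking that the bicategorical $\varepsilon$ really does induce the classical evaluation map levelwise after the Yoneda reductions. The paper's route is cleaner in that this identification is absorbed into the separate $\B=\ast$ case (where $\varepsilon$ is visibly composition) and into Proposition~\ref{dual_of_composition}; your route is more self-contained but demands that the reader trust the naturality tracking. One small point: what you call $\underline{b}$ is the paper's $\underline{\underline{b}}$ (the contravariant representable), so be careful with that notation.
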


\begin{proof} For clarity, denote by $\underline{a}$ (as usual) the covariant functor corepresented by $a$, and by $\underline{\underline{a}}$ the contravariant functor represented by $a$ during this proof. We first treat the case that $\B$ is trivial. Note that $D\underline{a}\cong \underline{\underline{a}}$ by Lemma~\ref{derYoneda} and\[\varepsilon\colon \underline{a}\wedge^L\underline{\underline{a}}\cong \underline{a}\wedge \underline{\underline{a}}\rightarrow \A\] 
is just the composition in $\A$. It follows that $\varepsilon^1_\ast$ is given by
\[[W,Z\wedge^L_\A \underline{a}]_{(\C,\ast)}\rightarrow [W\wedge^L \underline{\underline{a}},Z\wedge^L_\A \underline{a}\wedge^L\underline{\underline{a}}]_{(\C,\A)}\xrightarrow{\mathrm{compose}} [W\wedge^L\underline{\underline{a}}, Z]_{(\C,\A)}\,.\]
Lemma~\ref{derYoneda} exhibits the source and the target as $[W, Z(?,a)]_{(\C,\ast)}$. Here, $Z(a,?)$ makes sense for a derived module $Z$ because of the definition of weak equivalence. A direct check on elements (assuming that $W$ is cofibrant and $Z$ is fibrant) shows that the above composition is an isomorphism.

In the general case, we  have \[\underline{(a,b)}= \underline{a}\wedge \underline{\underline{b}}\cong \underline{a}\wedge^L \underline{\underline{b}}\,.\] 
Denote by $D\underline{\underline{b}}$ the functor $R\mathrm{map}(\underline{\underline{b}},\sph)$. This is the dual of $\underline{\underline{b}}$ viewed as a $(\ast,\B)$-bimodule. This $(\ast,\B)$-bimodule is dualisable by condition (FM). By Proposition~\ref{dual_of_composition} and the first part of the proof, $\underline{(a,b)}$ is dualisable with dual
\[D\underline{(a,b)}\cong D\underline{\underline{b}} \wedge^L \underline{\underline{a}}\,.\qedhere\] 
\end{proof}

The following corollary summarises the last two sections and comprises Theorem~\ref{intro:dual} from the Introduction.

\begin{cor}\label{main:dual} Suppose that condition (FM) holds. Then
every finite $(\A,\B)$-CW-spectrum is dualisable. Consequently, for every finite $(\A,\B)$-spectrum $X$,  any $(\A,\C)$-spectrum $V$ and any $(\C,\A)$-spectrum $Z$, there are natural isomorphisms
\[Z\wedge^L_\A X \cong R\mathrm{map}_{\Aop}(DX,Z)\]
and 
\[DX \wedge^L_\A V \cong R\mathrm{map}_\A(X,V)\,;\]
in particular, there is a natural isomorphism 
\begin{align}\label{double-dual}D_{(\Aop,\Bop)}(D_{(\A,\B)}X)^{\mathrm{op}} \cong X\end{align}
for finite $X$.
\end{cor}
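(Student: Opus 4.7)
The plan is to first establish dualisability of all finite $(\A,\B)$-CW-spectra by an induction that exactly matches the triangulated description of $\mathscr{SW}_{(\A,\B)}$ from Lemma~\ref{SW=SW}(c), and then to deduce the three displayed isomorphisms formally from the machinery already developed.

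\textbf{Step 1 (dualisability).} I would let $\mathcal{T} \subseteq \DM{\A}{\B}$ denote the full subcategory spanned by the dualisable objects. By Lemma~\ref{D:exact}(b), $\mathcal{T}$ is closed under $\Sigma^{\pm 1}$; by Lemma~\ref{D:exact}(c), it is closed under cones of morphisms, since if $X \to X' \to X'' \to \Sigma X$ is distinguished with $X, X'$ dualisable, then $X''$ is too. Hence $\mathcal{T}$ is a triangulated subcategory of $\DM{\A}{\B}$. By Lemma~\ref{dual_of_rep}, $\mathcal{T}$ contains all the bimodules $\underline{(a,b)}$, and by the characterisation of $\mathscr{SW}_{(\A,\B)}$ as the smallest triangulated subcategory containing these (Lemma~\ref{SW=SW}(c)), we conclude $\mathscr{SW}_{(\A,\B)} \subseteq \mathcal{T}$, i.e.\ every finite $(\A,\B)$-CW-spectrum is dualisable.

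\textbf{Step 2 (the two $R\mathrm{map}$-isomorphisms).} Once $X$ is dualisable, $(X, DX; \varepsilon_X)$ is a dual pair by definition. The two stated isomorphisms are then the specialisations~\eqref{pull1} and~\eqref{pull2} of Proposition~\ref{pull_out_map}, applied with $Y = DX$. There is essentially nothing to do here beyond invoking that proposition.

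\textbf{Step 3 (the double-dual isomorphism).} This is the part where I expect a small amount of genuine work is required. The idea is to apply Proposition~\ref{duality}: if $(X, DX; \varepsilon_X, \eta_X)$ is a dual pair in $\mathrm{DerMod}(\SP)$, then
\[
  \bigl(DX^{\mathrm{op}},\, X^{\mathrm{op}};\, \delta\,\varepsilon_X^{\mathrm{op}}\,\gamma^{-1},\, \gamma\,\eta_X^{\mathrm{op}}\,\delta^{-1}\bigr)
\]
is a dual pair of bimodules over $(\Bop, \Aop)$ and $(\Aop, \Bop)$. By the uniqueness formula~\eqref{dual_unique} from Proposition~\ref{pull_out_map} applied to this dual pair, the second entry is canonically isomorphic to $R\mathrm{map}_{\Aop}(DX^{\mathrm{op}}, \Aop)$, which is by Definition~\ref{SW_dual} precisely $D_{(\Aop,\Bop)}(DX^{\mathrm{op}}) = D_{(\Aop,\Bop)}(D_{(\A,\B)}X)^{\mathrm{op}}$. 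Hence we obtain $X^{\mathrm{op}} \cong D_{(\Aop,\Bop)}(D_{(\A,\B)}X)^{\mathrm{op}}$, and applying $(-)^{\mathrm{op}}$ once more gives the desired isomorphism~\eqref{double-dual}.

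\textbf{Main obstacle.} Steps 1 and 2 are essentially bookkeeping relying on results already proved. The subtlety lies in Step 3: one must be honest about the symmetry involution, carefully tracking that $(-)^{\mathrm{op}}$ is involutive and that the isomorphisms $\gamma$ and $\delta$ interact correctly with the derived balanced smash product. The assertion that Proposition~\ref{duality} applies at all depends on $\mathrm{DerMod}(\SP)$ being a symmetric bicategory, which was recorded in the preceding discussion; granted this, no further technicalities beyond unwinding the definitions intervene.
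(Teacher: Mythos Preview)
Your proposal is correct and essentially identical to the paper's proof: Step~1 and Step~2 match the paper verbatim, and for Step~3 the paper offers two routes, one of which is precisely your use of Proposition~\ref{duality}. The paper's other (slightly shorter) route for Step~3 is to specialise the first displayed isomorphism $Z\wedge^L_\A X \cong R\mathrm{map}_{\Aop}(DX,Z)$ by setting $\C=\A$ and $Z=\A$, so that the left-hand side becomes $\A\wedge^L_\A X\cong X$ and the right-hand side becomes the double dual.
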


\begin{rema}
It follows from the proof of Lemma~\ref{dual_of_rep} that if $\B=\ast$, then the dual of a finite $(\A,\ast)$-spectrum is a finite $(\ast,\A)$-spectrum. This is false for general $\B$. 
\end{rema}

\begin{rema} In practice, we will refer to~\eqref{double-dual} sloppily as $DDX\cong X$.
The 'op' in~\eqref{double-dual} refers to the fact that we have to consider $DX$ as an $(\Aop,\Bop)$-spectrum, instead of as a $(\B,\A)$-spectrum, which implies that the duality functor is taken with respect to the (contravariant) $\A$-variance again. 
\end{rema}

\begin{proof}[Proof of Corollary~\ref{main:dual}]
The full subcategory of dualisable objects contains all corepresentable functors $\underline{(a,b)}$ by Lemma~\ref{dual_of_rep} and is a triangulated subcategory by Lemma~\ref{D:exact} (b) and (c). Thus, it contains all finite $(\A,\B)$-spectra by Lemma~\ref{SW=SW} (c). The two isomorphisms follow from Proposition~\ref{pull_out_map}. The isomorphism $X\cong DDX$ follows from the first one by setting $\C=\A$ and $Z=\A$ (or from Proposition~\ref{duality}).
\end{proof}

In particular, $D$ constitutes an equivalence of triangulated categories \[\SWC\rightarrow \SWCop^{\mathrm{op}}\] for an arbitrary spectrally enriched category $\C$ satisfying (C).

\begin{exam}
Let $\C$ be the orbit category of finite subgroups of the integers. It has one object and automorphism group $\mathbb{Z}$. We will view $\C$ as a spectrally enriched category by adjoining a basepoint and smashing with $\sph$. Let $X$ be the $\mathbb{Z}$-space $\mathbb{R}$ with the usual translation action. This is a free, thus proper action, so it defines a $\Cop$-space $X^{?}$ that we abusively also denote by $X$. We want to describe the dual of $X_+$ which is a $\C$-spectrum. Suspending once, we get a cofibre sequence
\[\Sigma \underline{\underline{x}} \xrightarrow{\,\,F\,\,} \Sigma\underline{\underline{x}} \longrightarrow \Sigma X_+\,.\]
Here, $x$ denotes the unique object of $\C$ and the map $F$ can be described as follows: In the $\mathrm{S}^1$ coordinate, it collapses the antipodal point of the base point to the base point. Then it maps the first half of the circle to the circle in the target with the same $\underline{\underline{x}}$ coordinate $n$, and the second half of the circle to the $(n+1)$-st circle. Dualising and rotating, we thus get a cofibre sequence
\[\Sigma^{-1}\underline{x}\xrightarrow{\,DF\,} \Sigma^{-1}\underline{x} \longrightarrow D(X_+)\,.\]
\end{exam}

\section{Homological representation theorems}

Having established external Spanier-Whitehead duality, we can now prove our homology representation theorem, Theorem ~\ref{representation}, via the route sketched in the Introduction. Subsection~\ref{sec:homology} first recollect some well-known information about $\C$-homology theories, before Subsection~\ref{sec:main} uses results of Neeman, as well as the results of Section~\ref{sec:duality}, to prove the main result. It has the hypothesis that $\SWCop$ is a countable category. This turns out to be equivalent to the countability of $\C$ itself (up to equivalence of categories), as proved in Subsection~\ref{sec:countable}. 

 From now on, $\C$ is a discrete\footnote{as opposed to: enriched} index category.

\subsection{\C-homology theories}
\label{sec:homology}
Recall that a \emph{$\C$-homology theory} consists of a sequence of functors \[h_n^{\C}\colon \mathrm{Fun}(\C,\mathrm{Top}_\ast)\rightarrow \mathrm{Ab}\]
for $n\in\mathbb{Z}$,
together with natural isomorphisms $\sigma_n\colon h_n^{\C}(\Sigma X) \cong h_{n-1}^{\C}(X)$ such that:
\begin{itemize}
    \item If $A\xrightarrow{f} X$ is a map of pointed $\C$-spaces, then the sequence 
    \[h_n^{\C}(A)\rightarrow h_n^{\C}(X) \rightarrow h_n^{\C}(Cf)\]
    is exact. 
    \item For a collection $(X_i)$ of pointed $\C$-spaces, the canonical homomorphism \[\bigoplus\limits_{i\in I} h_n^{\C}(X_i)\rightarrow h_n^{\C}\left(\bigvee\limits_{i\in I} X_i\right)\] is an isomorphism.
    \item If $f\colon X\rightarrow Y$ is a weak equivalence of $\C$-spaces, then $h_n^{\C}(f)$ is an isomorphism for all $n$.
\end{itemize}

\emph{$\C$-cohomology theories $(h^n)_n$} are defined similarly, only that they are contravariant functors and the wedge axiom has a product instead of a sum.

If the functors $h_n^{\C}$ are only defined on finite $\C$-CW-complexes, then we call $h_\ast^{\C}$ a \emph{homology theory on finite $\C$-CW-complexes}. For homology theories, this is the same datum since the homology of a $\C$-CW-complex is the colimit of the homologies of its finite subcomplexes, by a telescope argument well-known from the classical setting. This is, however, \emph{not} true for cohomology theories. In both cases however, the wedge axiom is void since it follows from the cone axiom for finite wedge sums.

\begin{rema} There are variations in this definition which give equivalent notions of homology theories. For example, the homology theory may only be defined on pointed $\C$-CW-complexes, with the weak equivalence axiom left out (being void on $\C$-CW-complexes). Such a theory can be extended to all pointed $\C$-spaces via a functorial CW-approximation. Also, one might define unreduced homology theories which are functors from pairs of (unpointed) $\C$-spaces to abelian groups, satisfying the usual Eilenberg-Steenrod axioms. The notions of reduced and unreduced $\C$-homology theories are proved to be equivalent in the classical way \cite{MT}. All combinations of these two variations occur in the literature.
\end{rema}

Recall the notion of a (co-)homological functor on a triangulated category from \cite[Def.~1.1.7, Rem.~1.1.9]{neeman-book}.

\begin{lemma}\label{homological_functor}
A (co-)homology theory on finite pointed $\C$-CW-complexes is the same datum as a (co-)homological functor on the triangulated category $\SWC$.
\end{lemma}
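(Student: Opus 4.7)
The plan is to exhibit an explicit bijection by constructing a functor in each direction and then checking they are mutually inverse. In the forward direction, given a homology theory $(h_n^{\C}, \sigma_n)$ on finite $\C$-CW-complexes, I would define a functor $H\colon \SWC \to \mathrm{Ab}$ by setting $H(\Sigma^N \Sigma^\infty A) := h_{-N}^{\C}(A)$ on the description of objects provided by Lemma~\ref{SW=SW}(a). On morphisms, Lemma~\ref{SW=SW}(b) represents any element of $\mathrm{Hom}_{\SWC}(\Sigma^N \Sigma^\infty A, \Sigma^M \Sigma^\infty B)$ by an honest map of pointed $\C$-CW-complexes $f\colon \Sigma^{N+k}A \to \Sigma^{M+k}B$ at some suspension level $k$; I would send it to the composite
\[
h_{-N}^{\C}(A) \xrightarrow[\cong]{\sigma^{-k}} h_{-N-k}^{\C}(\Sigma^k A) \xrightarrow{h_\ast(f)} h_{-M-k}^{\C}(\Sigma^k B) \xrightarrow[\cong]{\sigma^{k}} h_{-M}^{\C}(B).
\]
In the reverse direction, given a homological functor $H$ on $\SWC$, I would define $h_n^{\C}(A) := H(\Sigma^{-n}\Sigma^\infty A)$, with suspension isomorphism induced by the canonical $\Sigma^{-n}\Sigma^\infty \Sigma A \cong \Sigma^{-(n-1)}\Sigma^\infty A$ in $\SHCC$.

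To verify the axioms of a homology theory for the second construction, the cone axiom reduces to observing that a cofibration $f\colon A \hookrightarrow X$ of finite pointed $\C$-CW-complexes yields, under $\Sigma^\infty$, a distinguished triangle $\Sigma^\infty A \to \Sigma^\infty X \to \Sigma^\infty(X/A) \to \Sigma\Sigma^\infty A$ in $\SHCC$, so the homological axiom for $H$ gives exactness. The weak equivalence and wedge axioms are vacuous or automatic on finite $\C$-CW-complexes (weak equivalences become isomorphisms in $\SHCC$; finite wedges are handled by iterating the cofiber axiom). To verify that the first construction yields a \emph{homological} functor, I would note that any distinguished triangle in $\SWC$ is isomorphic, after suitable (de)suspension, to one coming from a cofibration between finite pointed $\C$-CW-complexes, reducing to the cone axiom of $h_\ast^{\C}$.

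The main obstacle is well-definedness in the forward direction: the assignments on objects and morphisms must be independent both of the chosen presentation $\Sigma^N\Sigma^\infty A$ of a Spanier--Whitehead object and of the level $k$ used to represent a given morphism. Both issues are handled by the naturality of $\sigma_n$ together with the explicit form of the colimit in Lemma~\ref{SW=SW}(b): passing from level $k$ to level $k+1$ amounts to a single application of $\Sigma$, which by compatibility with $\sigma$ produces the same induced homomorphism. Once well-definedness is in hand, the mutual inverseness of the two constructions is a formal unwinding of definitions. The cohomological version follows by the same recipe with $\SWC$ replaced by $\SWC^{\mathrm{op}}$ (and Ab replaced by its opposite, so that homological functors on $\SWC^{\mathrm{op}}$ correspond to cohomological functors on $\SWC$).
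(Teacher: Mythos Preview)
Your proposal is correct and follows essentially the same approach as the paper: both directions use the same formulas $H(\Sigma^N\Sigma^\infty A)=h_{-N}^{\C}(A)$ and $h_n^{\C}(A)=H(\Sigma^{-n}\Sigma^\infty A)$, invoking Lemma~\ref{SW=SW} to handle objects and morphisms of $\SWC$. You have supplied considerably more detail than the paper's proof (which is quite terse), in particular on well-definedness and on how morphisms are handled via the colimit description of Lemma~\ref{SW=SW}(b), but the underlying argument is the same.
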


\begin{proof} We use the description of $\SWC$ given in Lemma~\ref{SW=SW}.
If $H$ is a homological functor, then defining 
\[h_n^{\C}(X) = H(\Sigma^{-n}\Sigma^{\infty}X)\]
together with the obvious suspension isomorphisms yields a homology theory on finite $\C$-CW-complexes. Conversely, if $h_\ast^{\C}$ is such a theory, then Lemma~\ref{SW=SW} shows that 
\[H(\Sigma^N\Sigma^{\infty}X)=h_{-N}^{\C}(X)\]
defines a functor on $\SWC$. The short exact cofibre sequence can be turned into a long exact sequence by the usual rotation method, showing that $H$ is a homological functor. It is obvious that these two constructions are inverse to each other.
\end{proof}

The following construction is classical \cite[Lemma~4.2]{davis-lueck}:
\begin{lemma}\label{davis-lueck-lemma}
Let $E\colon \Cop\rightarrow \SP$ be a functor. Then
\begin{align}\label{dl}
h_n^{\C}(X; E)=\pi_n(E\wedge^L_{\C} \Sigma^{\infty} X)
\end{align}
defines a $\C$-homology theory.
\end{lemma}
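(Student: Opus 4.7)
The plan is to verify in turn each of the axioms for a $\C$-homology theory, using only the fact that the composition in question is built from three functors each of which has well-behaved derived properties established earlier. First, $\Sigma^\infty$ is the derived suspension-spectrum functor from pointed $\C$-spaces to $\C$-spectra; being the left Quillen adjoint in a Quillen adjunction with $\Omega^\infty$, its derived version preserves weak equivalences (after implicit cofibrant replacement), cofiber sequences and coproducts. Second, $E \wedge^L_\C -\colon \SHCC \to \SHC$ is, by Theorem~\ref{der}(c), left adjoint to $R\mathrm{map}_{\Cop}(E,-)$, so it preserves triangles and coproducts in the triangulated sense. Third, $\pi_n\colon \SHC \to \mathrm{Ab}$ is the classical homotopy group functor, satisfying the usual long exact sequence and wedge axioms.

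For the suspension isomorphism, I would combine the natural identification $\Sigma^\infty(\Sigma X) \cong \sph \wedge^L \Sigma^\infty X$ in $\SHCC$ with the associativity isomorphism of Theorem~\ref{der}(b) to rewrite
\[
E \wedge^L_\C \Sigma^\infty(\Sigma X) \;\cong\; \sph \wedge^L (E\wedge^L_\C \Sigma^\infty X),
\]
after which applying $\pi_n$ yields the expected shift $\sigma_n\colon h_n^\C(\Sigma X;E) \xrightarrow{\cong} h_{n-1}^\C(X;E)$. For the cone axiom, a cofiber sequence $A \xrightarrow{f} X \to Cf$ of pointed $\C$-spaces goes under $\Sigma^\infty$ to a distinguished triangle in $\SHCC$, and then under $E \wedge^L_\C -$ to a distinguished triangle in $\SHC$ (exactness of the left adjoint in each variable); the long exact sequence~\eqref{homological_sequence} applied with $B = \sph$ and suitable reindexing gives the three-term exactness at $h_n^\C(X;E)$.

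For the wedge axiom, both $\Sigma^\infty$ and $E \wedge^L_\C -$ preserve arbitrary coproducts as left adjoints (in the derived sense), and $\pi_n$ of a wedge of spectra is the direct sum of the $\pi_n$; composing these three observations gives the isomorphism. The weak equivalence axiom is immediate once one interprets the whole composition as a functor on the homotopy category: $\Sigma^\infty$ descends to $\SHCC$, $E\wedge^L_\C -$ is by construction a functor on the homotopy category (see Corollary~\ref{flat} for the precise statement that $\wedge^L_\C$ is well-defined on homotopy classes), and $\pi_n$ is homotopy-invariant. The main bookkeeping point, and essentially the only thing requiring care, is to insert cofibrant replacements consistently so that $\Sigma^\infty(-) \wedge_\C E$ on honest cofibrant $\C$-spaces really does compute $E \wedge^L_\C \Sigma^\infty(-)$; this is routine given Proposition~\ref{resolve_one}.
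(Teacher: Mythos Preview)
Your argument is correct and follows the standard route: each axiom is checked by combining the exactness and coproduct-preservation of the left adjoint $E\wedge^L_\C -$ (Theorem~\ref{der}(c)) with the corresponding properties of $\Sigma^\infty$ and $\pi_n$. The paper itself does not spell out a proof; it simply cites the result as classical, referring to \cite[Lemma~4.2]{davis-lueck}. Your write-up therefore supplies exactly the details the paper omits.

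Two small cosmetic points: in the suspension isomorphism you write $\sph\wedge^L$, but $\sph$ is the monoidal unit, so this does not shift degree --- you mean $S^1\wedge^L$ (or simply $\Sigma$); and the reference to Corollary~\ref{flat} for ``well-definedness on homotopy classes'' is slightly off --- that is already part of the construction of $\wedge^L_\C$ in Theorem~\ref{der}, whereas Corollary~\ref{flat} concerns flat replacements. Neither affects the validity of the argument.
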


\begin{rema}
Strictly speaking, in the right-hand side of the above equation, $\pi_n(-)$ should be $[\Sigma^n\sph, -]_{\SHC}$. This coincides with the well-known colimit definition for orthogonal spectra, but not for (all) symmetric spectra, cf.\ \cite[p.~61]{HSS}.
\end{rema}

\subsection{The homology representation theorem}\label{sec:main} Our main result, Theorem~\ref{representation}, which is Theorem~\ref{intro:rep} from the Introduction, can be seen as a converse to Lemma~\ref{davis-lueck-lemma}. It shows that every homology theory can be obtained by this construction, in case $\SWCop$ is countable in the following sense.

\begin{defi}
    A category is called \emph{countable} if it has countably many objects and morphisms.
\end{defi}

\begin{rema}
All our results also apply to categories which are equivalent to countable categories. We decided to require that they \emph{are} countable to keep the exposition simple. 
\end{rema}

\begin{thm} \label{representation} Suppose that $\SWCop$ is countable.
Let $h_\ast^{\C}$ be any $\C$-homology theory. Then there is a $\Cop$-spectrum $E$ and a natural isomorphism
\[h_\ast^{\C}(-) \cong h_\ast^{\C}(-;E)\,.\]
Moreover, every morphism of homology theories 
\[h_\ast^{\C}(-;E) \longrightarrow h_\ast^{\C}(-;E')\]
is induced by a morphism $E\longrightarrow E'$ in the derived category \SHCCop.
\end{thm}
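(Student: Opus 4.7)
The plan is to use external Spanier-Whitehead duality to translate the homology representation problem into a cohomological representation problem on $\SHCCop$, apply Neeman's version of Brown representability, and then extend from finite $\C$-CW-complexes to arbitrary $\C$-spaces by a standard telescope argument.

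First, restrict $h_\ast^\C$ to finite pointed $\C$-CW-complexes. By Lemma~\ref{homological_functor}, this restriction amounts to a single homological functor $H\colon \SWC\to \mathrm{Ab}$ with $h_n^\C(X)=H(\Sigma^{-n}\Sigma^\infty X)$. Corollary~\ref{main:dual} yields an equivalence of triangulated categories $D\colon \SWC \xrightarrow{\simeq} \SWCop^{\mathrm{op}}$, so precomposing $H$ with a quasi-inverse of $D$ produces a cohomological functor $\tilde H$ on $\SWCop$.

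Now apply Neeman's representability theorem \cite{neeman1997}: the representable $\Cop$-spectra are compact generators of $\SHCCop$ by Lemma~\ref{derYoneda}, and they generate $\SWCop$ as a triangulated subcategory by Lemma~\ref{SW=SW}(c); by hypothesis this subcategory has a countable skeleton, so Neeman's theorem produces an object $E\in\SHCCop$ together with a natural isomorphism $\tilde H(Y)\cong [Y,E]_{\Cop}$ for $Y\in \SWCop$, and furthermore asserts that natural transformations between such functors are induced by unique morphisms in $\SHCCop$. Unwinding via $D$ and the first identity of Corollary~\ref{main:dual}, for every finite $\C$-CW-complex $X$ we obtain
\[
h_n^\C(X)\;\cong\;[\Sigma^n D(\Sigma^\infty X),E]_{\Cop}\;\cong\;\pi_n R\mathrm{map}_{\Cop}(D(\Sigma^\infty X),E)\;\cong\;\pi_n(E\wedge_\C^L \Sigma^\infty X)\;=\;h_n^\C(X;E),
\]
naturally in $X$, which gives the isomorphism on finite $\C$-CW-complexes.

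Finally, extend to arbitrary $\C$-spaces. Functorial $\C$-CW-approximation, permitted by the weak equivalence axiom, reduces the problem to $\C$-CW-complexes; any such is the sequential colimit of inclusions of finite sub-CW-complexes, and the wedge together with the cofibre axiom implies, via the usual Milnor telescope argument, that both $h_\ast^\C(-)$ and $h_\ast^\C(-;E)$ commute with such colimits. Hence the isomorphism of finite CW-complexes extends uniquely and naturally, and the same argument applies to natural transformations, completing the ``moreover'' clause. The main obstacle is to verify that Neeman's precise hypotheses hold in our setting --- namely that $\SHCCop$ is compactly generated, that the representable $\Cop$-spectra yield a countable set of compact generators (equivalent to countability of $\C$, as discussed in Subsection~\ref{sec:countable}), and that $\tilde H$ is a genuine cohomological functor in the exact sense required by \cite{neeman1997} --- after which the telescope extension from finite to infinite $\C$-CW-complexes is essentially formal.
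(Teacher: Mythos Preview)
Your proposal is correct and follows essentially the same route as the paper: restrict to $\SWC$, dualise via $D$ to obtain a cohomological functor on $\SWCop$, invoke Neeman's Brown representability (using countability of $\SWCop$ to guarantee the projective-dimension-$\le 1$ hypothesis via \cite[Thm.~5.1]{neeman1997}), and then extend to arbitrary $\C$-spaces by colimits and CW-approximation. One small correction: you write that Neeman's theorem asserts natural transformations are induced by \emph{unique} morphisms in $\SHCCop$, but in fact \cite[Prop.~4.11]{neeman1997} only gives existence, not uniqueness---the paper explicitly notes this (phantoms obstruct uniqueness already for $\C=\ast$), and fortunately only existence is needed for the ``moreover'' clause.
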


\begin{rema}
The countability of $\SWCop$ is equivalent to the countability of $\C$, as is proved in Subsection~\ref{sec:countable} below.
\end{rema}

The morphism in the last statement of Theorem~\ref{representation} is in general not unique, already in the case $\C=\ast$, due to the existence of phantoms. The proof of the theorem is based on the following two theorems from \cite{neeman1997}:

\begin{citedthm}[{\cite[Thm.~5.1]{neeman1997}}] \label{neeman1} Let $\mathscr{S}$ be a countable triangulated category. Then the objects of projective dimension $\le 1$ in $\mathrm{Fun}(\mathscr{S}^{\mathrm{op}},\mathrm{Ab})$  are exactly the homological functors $\mathscr{S}^{\mathrm{op}}\rightarrow \mathrm{Ab}$. \end{citedthm}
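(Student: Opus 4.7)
For the direction \emph{projective dimension $\le 1$ implies homological}: by the Yoneda lemma every representable $\mathscr{S}(-, Y)$ is projective in $\mathrm{Fun}(\mathscr{S}^{\mathrm{op}}, \mathrm{Ab})$, and conversely a standard argument shows every projective is a direct summand of a direct sum of representables. Representables are homological, since applying $\mathscr{S}(-, Y)$ to a triangle yields a long exact sequence by the triangle axioms, and the class of homological functors is closed under arbitrary direct sums and direct summands. Hence, given a length-one resolution $0 \to P_1 \to P_0 \to F \to 0$, applying it objectwise to a triangle $A \to B \to C \to \Sigma A$ in $\mathscr{S}$ produces a short exact sequence of complexes whose outer terms are long exact; the induced long exact sequence in cohomology forces the complex associated to $F$ to be exact as well, so $F$ is homological.

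For the converse, \emph{homological implies projective dimension $\le 1$}: given a homological $F$, construct a surjection $\pi \colon P_0 \twoheadrightarrow F$ by setting $P_0 = \bigoplus_{(Y, y)} \mathscr{S}(-, Y)$, summed over pairs $Y \in \mathscr{S}$ and $y \in F(Y)$, with the canonical Yoneda map. The task is then to show that the kernel $K = \ker \pi$ is itself projective, i.\,e.\ a direct sum of representables.

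The main obstacle is precisely this projectivity of $K$. \emph{A priori}, by reversing the long exact sequence argument above one only knows that $K$ is a homological subfunctor of $P_0$, and iterating the construction would give a possibly infinite projective resolution that one must show terminates at length one. Here the countability of $\mathscr{S}$ enters decisively: each element of $K(Z)$ encodes a relation $\sum_i F(a_i)(y_i) = 0$ in $F(Z)$, which by the homological property of $F$ can be realised through a triangle in $\mathscr{S}$ obtained by completing the induced map $Z \to \bigoplus_i Y_i$ to a triangle (using the existence of finite biproducts). Because $\mathscr{S}$ has only countably many objects and morphisms, all such relations can be enumerated in an $\omega$-sequence, yielding a surjection $P_1 \to K$ from a direct sum of representables. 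The remaining injectivity of this map---equivalently, the vanishing of the ``relations among relations''---is the heart of Neeman's theorem and would follow from a Mittag-Leffler / $\lim^1$ vanishing argument that applies precisely because the indexing telescope is countable, showing $\mathrm{Ext}^n(F, -) = 0$ for all $n \ge 2$ and hence that $K$ is projective.
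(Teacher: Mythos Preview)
This statement is a \emph{cited} theorem in the paper: it is quoted verbatim from Neeman's article and used as a black box, so the paper offers no proof of its own to compare against. I can therefore only comment on your sketch relative to Neeman's actual argument.

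Your first direction (projective dimension $\le 1$ $\Rightarrow$ homological) is correct and standard; it does not require countability.

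For the converse there is a genuine gap. Your $P_0=\bigoplus_{(Y,y)}\mathscr{S}(-,Y)$ is indexed by pairs with $y\in F(Y)$, and nothing prevents $F(Y)$ from being uncountable even when $\mathscr{S}$ is countable; so the countability hypothesis gives you no control over this particular resolution, and the subsequent ``enumerate all relations in an $\omega$-sequence'' step does not go through as stated. The vague appeal to Mittag--Leffler/$\lim^1$ at the end is in the right spirit but is not attached to a concrete construction.

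Neeman's route is different and cleaner. The key structural fact (valid without any countability) is that the homological functors $\mathscr{S}^{\mathrm{op}}\to\mathrm{Ab}$ are exactly the \emph{flat} objects of $\mathrm{Fun}(\mathscr{S}^{\mathrm{op}},\mathrm{Ab})$, i.e.\ the filtered colimits of representables. Countability of $\mathscr{S}$ then enters only to say that any such filtered system admits a cofinal $\omega$-chain, so one may write $F\cong\mathrm{colim}_{n}\,\mathscr{S}(-,X_n)$ along maps $X_0\to X_1\to\cdots$. The telescope sequence
\[
0\longrightarrow\bigoplus_{n}\mathscr{S}(-,X_n)\xrightarrow{\ 1-\mathrm{shift}\ }\bigoplus_{n}\mathscr{S}(-,X_n)\longrightarrow F\longrightarrow 0
\]
is then a length-one projective resolution; injectivity on the left is immediate from the filtration, with no further $\lim^1$ analysis needed. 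If you want to repair your argument, replace the ``sum over all elements'' presentation by this filtered-colimit description and invoke the cofinal $\omega$-chain.
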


We cite a second theorem from the same paper. The version in which we state it here seems to be slightly stronger, but the same proofs apply in our case. 

In detail: Let $\mathscr{T}$ be a triangulated category  with arbitrary small coproducts, and denote by $\mathscr{S}$ a triangulated subcategory which
\begin{itemize}
    \item is essentially small,
    \item generates $\mathscr{T}$ \cite[Def.~2.5]{neeman1997},
    \item consists of compact objects \cite[Def.~2.2]{neeman1997}.
\end{itemize}
Neeman insists on $\mathscr{S}$ being the category $\mathscr{T}^c$ of \emph{all} compact objects (and he requires this subcategory to have the other two properties), but this is not really needed. 

\begin{citedthm}[{\cite[Prop.~4.11]{neeman1997}}] \label{neeman2} If every homological functor $H\colon \mathscr{S}^{\mathrm{op}}\rightarrow \mathrm{Ab}$ has projective dimension $\le 1$ as an object of $\mathrm{Fun}(\mathscr{S}^{\mathrm{op}},\mathrm{Ab})$, then the pair $(\mathscr{T},\mathscr{S})$ satisfies Brown representability in the sense that the following two assertions hold:
\begin{enumerate}
    \item Every homological functor $H\colon \mathscr{S}^{\mathrm{op}}\rightarrow \mathrm{Ab}$ is naturally isomorphic to a restriction
    \[H(-)\cong \mathscr{T}(-,X)\restriction_{\mathscr{S}}\]
    for some object $X$ of $\mathscr{T}$.
    \item Given any natural transformation of functors on $\mathscr{S}^{\mathrm{op}}$
    \[\mathscr{T}(-,X)\restriction_{\mathscr{S}}\rightarrow \mathscr{T}(-,Y)\restriction_{\mathscr{S}}\,,\]
    there is a morphism $f\colon X\rightarrow Y$ in $\mathscr{T}$ inducing the natural transformation. The map $f$ is in general not unique.
\end{enumerate}
\end{citedthm}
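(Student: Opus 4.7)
The plan is to construct the representing object $X_H$ directly from a length-one projective resolution of $H$ in $\mathrm{Fun}(\mathscr{S}^{\mathrm{op}},\mathrm{Ab})$, avoiding any transfinite induction. I will rely on three inputs: Yoneda, the compactness of objects of $\mathscr{S}$ in $\mathscr{T}$ (which commutes $\mathscr{T}(s,-)$ with coproducts), and the closure of $\mathscr{S}$ under $\Sigma^{\pm 1}$.

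Since $\mathscr{S}$ is essentially small and preadditive, projectives in $\mathrm{Fun}(\mathscr{S}^{\mathrm{op}},\mathrm{Ab})$ are retracts of coproducts of representables $\mathscr{S}(-,s)$. The projective-dimension hypothesis supplies $0\to P_1\xrightarrow{\iota} P_0\to H\to 0$, and by enlarging with complementary summands I may assume $P_0\cong\bigoplus_i\mathscr{S}(-,s_i)$ and $P_1\cong\bigoplus_j\mathscr{S}(-,t_j)$ with $s_i,t_j\in\mathscr{S}$. Yoneda together with compactness of each $t_j$ gives
\[\mathrm{Nat}\!\left(\mathscr{S}(-,t_j),\,\bigoplus_i\mathscr{S}(-,s_i)\right)\cong \bigoplus_i\mathscr{S}(t_j,s_i)\cong \mathscr{T}\!\left(t_j,\,\bigoplus_i s_i\right)\!,\]
so $\iota$ lifts to a morphism $\alpha\colon \bigoplus_j t_j\to\bigoplus_i s_i$ in $\mathscr{T}$, and I set $X_H:=\mathrm{cofib}(\alpha)$. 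Applying $\mathscr{T}(s,-)$ to the defining triangle for $s\in\mathscr{S}$, compactness identifies the long exact sequence as
\[P_1(s)\xrightarrow{\iota_s} P_0(s)\to \mathscr{T}(s,X_H)\to P_1(\Sigma^{-1}s)\xrightarrow{\iota_{\Sigma^{-1}s}} P_0(\Sigma^{-1}s).\]
Both $\iota_s$ and $\iota_{\Sigma^{-1}s}$ are injective (the latter because $\Sigma^{-1}s$ again lies in $\mathscr{S}$), so $\mathscr{T}(s,X_H)\cong\mathrm{coker}(\iota_s)=H(s)$, naturally in $s$; this is assertion (1).

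For assertion (2), I present $X$ as $\mathrm{cofib}(\alpha)$ by applying the above construction to the homological functor $\mathscr{T}(-,X)\restriction_{\mathscr{S}}$. Given $\eta\colon\mathscr{T}(-,X)\restriction_{\mathscr{S}}\to \mathscr{T}(-,Y)\restriction_{\mathscr{S}}$, the composite $\bigoplus_i\mathscr{S}(-,s_i)\twoheadrightarrow \mathscr{T}(-,X)\restriction_{\mathscr{S}}\xrightarrow{\eta}\mathscr{T}(-,Y)\restriction_{\mathscr{S}}$ lifts by Yoneda and compactness to a morphism $\beta\colon\bigoplus_i s_i\to Y$. The composite $\beta\alpha$ vanishes because its $j$-th component $t_j\to Y$ equals $\eta$ evaluated on the zero composite $t_j\to\bigoplus_i s_i\to X$ in the triangle defining $X$. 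Hence $\beta$ factors through a morphism $X\to Y$, and a direct check using surjectivity of $P_0\twoheadrightarrow \mathscr{T}(-,X)\restriction_{\mathscr{S}}$ shows that it induces $\eta$; the freedom in lifting $\beta$ explains the stated non-uniqueness. The main obstacle I anticipate is the bookkeeping around retracts --- a projective is only \emph{a retract of} a coproduct of representables, so some care is needed to make both $P_0$ and $P_1$ honest coproducts while preserving injectivity of $\iota$, most cleanly by adjoining the complementary summand of each side to the other via an identity map. Once this is set up, the rest is routine diagram chasing combining Yoneda, compactness, and the cofibre long exact sequence.
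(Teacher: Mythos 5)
The paper itself does not prove this statement: it is quoted from Neeman, with only the remark that Neeman's argument for $\mathscr{S}=\mathscr{T}^c$ applies verbatim to any essentially small generating subcategory of compact objects. Your proposal is therefore supplying an actual proof, and the strategy -- realise a length-one projective resolution $0\to P_1\xrightarrow{\iota}P_0\to H\to 0$ by a morphism $\alpha$ between coproducts of objects of $\mathscr{S}$ in $\mathscr{T}$ and take $X_H=\mathrm{cofib}(\alpha)$ -- is the standard one and does prove assertion (1): compactness identifies the long exact sequence correctly, and injectivity of $\iota_{\Sigma^{-1}s}$ (using closure of $\mathscr{S}$ under desuspension) kills the connecting map. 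One small repair: your proposed bookkeeping of ``adjoining the complementary summand of each side to the other'' does not terminate, since after replacing $P_1$ by $Q_1=P_1\oplus P_1'$ the middle term becomes $P_0\oplus P_1'$, which is again only a retract of a coproduct of representables. Either take $P_0$ to be the canonical coproduct of representables surjecting onto $H$ from the outset (so only the kernel needs treatment) and use an Eilenberg swindle, replacing the resolution by $0\to P_1\oplus Q_1^{(\mathbb{N})}\to P_0\oplus Q_1^{(\mathbb{N})}\to H\to 0$ with $P_1\oplus Q_1^{(\mathbb{N})}\cong Q_1^{(\mathbb{N})}$; or split the relevant idempotent of $\bigoplus_j t_j$ directly in $\mathscr{T}$.

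The substantive gap is in assertion (2), at the phrase ``I present $X$ as $\mathrm{cofib}(\alpha)$.'' Applying your construction to $H=\mathscr{T}(-,X)\restriction_{\mathscr{S}}$ yields an object $X_H$ and an isomorphism of \emph{functors} $\mathscr{T}(-,X_H)\restriction_{\mathscr{S}}\cong\mathscr{T}(-,X)\restriction_{\mathscr{S}}$; it does not yield an isomorphism $X_H\cong X$ in $\mathscr{T}$, and without that your factorisation produces a map out of $X_H$ rather than out of $X$. To close this you must first construct a comparison map $\gamma\colon X_H\to X$ (the same lifting argument you use for $\beta$, applied to the counit $P_0\twoheadrightarrow\mathscr{T}(-,X)\restriction_{\mathscr{S}}$), note that $\gamma$ induces isomorphisms on $\mathscr{T}(s,-)$ for every $s\in\mathscr{S}$, and then argue that the third object of a triangle on $\gamma$ receives no nonzero maps from any shift of an object of $\mathscr{S}$ and hence is zero. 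That last step is exactly where the hypothesis that $\mathscr{S}$ \emph{generates} $\mathscr{T}$ enters, and your proposal never invokes generation anywhere; without it $\gamma$ need not be invertible and the argument for (2) breaks. Once $X\cong X_H$ is established, your construction of $\beta$, the vanishing of $\beta\alpha$, and the factorisation through the cofibre are all correct.
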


We apply the two theorems to $\mathscr{T}=\SHCCop$ and $\mathscr{S}=\SWCop$. The generation and compactness hypotheses are trivial.

\begin{proof} Let $H$ be the homological functor on $\SWC$ corresponding to (the restriction of) $h_{\ast}^{\C}$ by Lemma~\ref{homological_functor}. Since $D$ is exact by Lemma~\ref{D:exact} (a), we can define a homological functor $G$ on $\SWCop^{\mathrm{op}}$ by 
\[G(Y)=H(DY)\,.\]
By Theorems~\ref{neeman1} and~\ref{neeman2}, there is a fibrant and cofibrant $\Cop$-spectrum $E$ that represents $G$.
We thus have natural isomorphisms
\begin{align*} h_n^\C(X) &\cong H(\Sigma^{-n}\Sigma^{\infty}X)\cong G(D(\Sigma^{-n}\Sigma^{\infty}X)) \cong [D(\Sigma^{-n}\Sigma^{\infty}X), E]_{\Cop} \\ & \stackrel{(\eta_X)^1_{\ast}}{\cong} [\Sigma^n\sph, E\wedge^L_{\C} \Sigma^{\infty}X] \cong \pi_n(E\wedge_\C \Sigma^{\infty}X)\,.\end{align*}
An arbitrary $\C$-CW-complex $X$ is the colimit of its finite subcomplexes, and both homology theories commute with these colimits, so the isomorphism can be pulled over. Finally, an arbitrary $\C$-space can be approximated by a $\C$-CW-complex.

The representation of morphisms of homology theories follows analogously from part (2) of Theorem~\ref{neeman2}.
\end{proof}

\subsubsection{\C-cohomology theories} A $\Cop$-spectrum $E$ defines a cohomology theory via
\begin{align*} h_\C^{\ast}(Y;E)&=[\Sigma^{-n}\Sigma^{\infty}Y,E]_{\SHCCop}\cong\pi_{-n}(R\mathrm{map}_{\Cop}(\Sigma^{\infty}Y,E))\,.\end{align*}
If $Y$ is a $\C$-CW-complex and $E$ is fibrant, the $R$ can be omitted. The fact that every $\C$-cohomology theory has this form, i.\ e. the generalisation of the classical Brown Representability Theorem, may be obtained by mimicking its original proof \cite{barcenas, MT}, or by citing a theorem of Neeman again \cite[Thm.~8.3.3]{neeman-book}. Note that the cohomological case is in any way considerably easier than the homological case. It doesn't need the countability assumption.

\subsubsection{Morphisms of \C-cohomology theories} These are always represented by morphisms in \SHCCop: First, replace the representing spectra $E, E'$ by fibrant and cofibrant spectra, and restrict to cofibrant $X$. Then the $n$-th degree cohomology theory is just given by $[X,E_n]_\C$, thus we get various maps $E_n\rightarrow E'_n$ such that the obvious compatibility diagrams commute up to homotopy. Now, rewrite these diagrams using the structure maps $\Sigma E_n \rightarrow E_{n+1}$ and use that these have the homotopy extension property since $E$ is cofibrant \cite[Lemma~11.4]{MMSS} to strictify the diagrams inductively. (This is the argument for sequential spectra; use the arguments presented in Subsection~\ref{sequential} to pass to orthogonal spectra.)

\subsection{Countability considerations} \label{sec:countable}

In practice, it may seem hard to check whether $\SWCop$ is countable for a given category $\C$. However, this turns out to be equivalent of the countability of the category $\C$ itself: 

\begin{prop}\label{countable}
Let $\C$ be a category. Then $\SWC$ is equivalent to a countable category if and only if $\C$ is.
\end{prop}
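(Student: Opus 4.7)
The plan is to prove each implication separately, using the fact that a category is equivalent to a countable one precisely when it has countably many isomorphism classes of objects and countable hom sets. The main organizing tool is Lemma~\ref{SW=SW}(c), which identifies $\SWC$ with the smallest triangulated subcategory of $\SHCC$ containing the representables $\underline{c}$, together with Lemma~\ref{derYoneda} giving the identification $[\underline{c},\underline{d}]_\C\cong\pi_0\underline{d}(c)=\mathbb{Z}[\C(d,c)]$.

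For the forward direction, I may assume WLOG that $\C$ is itself countable. I produce a countable full subcategory $\mathscr{D}\subseteq\SWC$ equivalent to $\SWC$ by an $\omega$-indexed induction: set $\mathscr{D}_0=\{\underline{c}\mid c\in\mathrm{Ob}(\C)\}$ and form $\mathscr{D}_{n+1}$ from $\mathscr{D}_n$ by adjoining a chosen representative of $\Sigma^{\pm 1}X$ for every $X\in\mathscr{D}_n$ and of a cone on $f$ for every morphism $f$ in $\mathscr{D}_n$. Lemma~\ref{SW=SW}(c) then ensures that $\mathscr{D}:=\bigcup_n\mathscr{D}_n$ meets every isomorphism class of $\SWC$. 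Countability is preserved stage-by-stage: at the base, $\mathbb{Z}[\C(d,c)]$ is countable since $\C(d,c)$ is; at the inductive step, adjoining countably many shifts and cones keeps the object set countable, while the long exact sequences~\eqref{homological_sequence} associated to the cone triangles of the newly adjoined objects sandwich every new hom set between countable ones already available in $\mathscr{D}_n$.

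For the backward direction, assume $\SWC$ is equivalent to a countable category. Countability of each $\C(d,c)$ is immediate, since it sits as the canonical basis inside the countable group $\mathbb{Z}[\C(d,c)]\cong[\underline{c},\underline{d}]_\C$. For the count of isomorphism classes of objects, my plan is to show that $c\mapsto\underline{c}$ reflects isomorphisms, so that $\mathrm{Ob}(\C)/\!\cong$ injects into the countable set of isomorphism classes of $\SWC$. Given inverse maps $\alpha\in[\underline{c},\underline{d}]_\C$ and $\beta\in[\underline{d},\underline{c}]_\C$, Lemma~\ref{derYoneda} presents them as $\alpha=\sum_i a_ig_i$ and $\beta=\sum_j b_jk_j$ with $g_i\in\C(d,c)$ and $k_j\in\C(c,d)$, and the conditions $\alpha\beta=1_{\underline{d}}$ and $\beta\alpha=1_{\underline{c}}$ translate into $\sum_{i,j}a_ib_j(k_jg_i)=1_d$ in $\mathbb{Z}[\C(d,d)]$ and $\sum_{i,j}a_ib_j(g_ik_j)=1_c$ in $\mathbb{Z}[\C(c,c)]$.

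The main obstacle is extracting from these two free-abelian-group identities a single pair $(i,j)$ realizing both $g_ik_j=1_c$ and $k_jg_i=1_d$ in $\C$, which is what provides an honest isomorphism $c\cong d$ in $\C$. Reading off the coefficients of $1_c$ and $1_d$ immediately yields pairs realizing each equality separately, so $c$ is a retract of $d$ in $\C$ and conversely, but these pairs may a priori be distinct. Upgrading to a simultaneous pair requires a careful accounting of the induced cancellations on the non-identity basis elements of $\mathbb{Z}[\C(c,c)]$ and $\mathbb{Z}[\C(d,d)]$, together with the observation that for each pair $(i,j)$ lying in one of the two singular sets the complementary composite $g_ik_j$ (resp.\ $k_jg_i$) is forced to be an idempotent; this is the step where I expect the proof to demand genuine care.
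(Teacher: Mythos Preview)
Your forward direction is correct and takes a genuinely different route from the paper. You build $\SWC$ up as the triangulated subcategory generated by the $\underline{c}$ (Lemma~\ref{SW=SW}(c)) and control hom sets stage by stage via the long exact sequences~\eqref{homological_sequence}, starting from $[\Sigma^n\underline{c},\underline{d}]\cong\bigoplus_{\C(d,c)}\pi_n^s(\sph)$. The paper instead works unstably: it uses the description of $\SWC$ via finite pointed $\C$-CW-complexes (Lemma~\ref{SW=SW}(a),(b)) and counts homotopy types by induction on the number of cells, invoking the classical Lemma~\ref{countableCW} that homotopy groups (and relative homotopy sets) of countable CW-complexes are countable. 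Your argument is more internal to the triangulated category and avoids point-set input about CW-complexes; the paper's argument is more geometric and stays close to the space-level picture. Both are fine.

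For the backward direction there is a genuine gap. You correctly reduce to showing that $c\mapsto\underline{c}$ reflects isomorphisms, which via Lemma~\ref{derYoneda} is the statement that $c\cong d$ in $\mathbb{Z}\C$ forces $c\cong d$ in $\C$; but the final paragraph does not establish this. You obtain that $c$ and $d$ are mutual retracts in $\C$ and then appeal to an ``accounting of cancellations'' and an idempotent observation without carrying it out; in a general category, mutual retracts need not be isomorphic, so the heuristic does not close the argument as stated. The paper's proof does not attempt this elementwise extraction at all. It instead postcomposes the Yoneda functor with the evaluation $(H_c)_c\colon\SWC\to\mathrm{Fun}(\C,\mathrm{Ab})$, $X\mapsto\bigl(c\mapsto H_0(X(c))\bigr)$, and identifies the composite
\[
\Cop\longrightarrow\SWC\xrightarrow{(H_c)_c}\mathrm{Fun}(\C,\mathrm{Ab}),\qquad d\longmapsto \mathbb{Z}[\C(d,-)],
\]
with the (linearised) Yoneda embedding, which the paper records as faithful and injective on isomorphism classes; both properties of $\Cop\to\SWC$ then follow from those of the composite. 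The point is structural: rather than manipulating the coefficients of $\alpha$ and $\beta$, one produces a functor \emph{out of} $\SWC$ through which the classical Yoneda embedding of $\C$ factors, so that reflection of isomorphisms and faithfulness are inherited for free.
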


\begin{exam}
If $G$ is a countable group and $\mathcal{F}$ is a family of subgroups which is countable up to conjugation in $G$, then  $\mathrm{Or}(G,\mathcal{F})$ is countable. For instance, $\mathcal{F}$ can be the family of finite subgroups. 
\end{exam}

\begin{exam}
Let $G$ be a reductive $p$-adic algebraic group and $\mathcal{F}$ the family of compact open subgroups. Note that the orbit category is a discrete category, in the sense that the topology on the morphism spaces is discrete. We now show that it is countable. Any compact subgroup fixes a vertex in the Bruhat-Tits building, hence is contained in a vertex stabiliser. These are all conjugate to a stabiliser of the vertex of some fundamental chamber, of which there are only finitely many. Let $K$ be a vertex stabiliser. The compact totally disconnected group $K$ has a countable system $K_i$ of compact open subgroups which form a neighbourhood basis of the identity. Thus, every subgroup of $K$ lies between some $K_i$ and $K$. But for fixed $i$, there are only finitely many of these, since they correspond to subgroups of the finite group $K/K_i$. To prove that morphism sets are countable, it suffices by Lemma~\ref{orb_explicit} to show that $G/K$  is countable. But $G/K$ is the orbit of a vertex in the Bruhat-Tits building, which is countable since the building is a union of balls and every ball contains only finitely many vertices by local compactness. With a little more care, one can show that if $G$ is a reductive group over $\mathbb{Q}_p$ which is absolutely almost simple and simply-connected, then the morphism sets are even finite \cite{BT}.
\end{exam}

\begin{lemma}\label{countableCW}
Let $X$ be a countable pointed CW-complex.
\begin{itemize}
    \item[(a)] For every $n$, $\pi_n(X)$ is countable.
    \item[(b)] Fix a map $\partial\mathrm{D}^n\rightarrow X$. Then the set $[(\mathrm{D}^n,\partial\mathrm{D}^n),X]$ of homotopy classes of maps $\mathrm{D}^n\rightarrow X$ rel $\partial \mathrm{D}^n$ is countable.
\end{itemize}
\end{lemma}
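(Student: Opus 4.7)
For part (a), the plan is to reduce to the finite case. Since any compact subset of a CW-complex is contained in a finite subcomplex, every map from a compact space into $X$ factors through a finite subcomplex, and the same holds for homotopies with compact domain. Since $X$ has countably many cells, we may write $X=\bigcup_{j\in\mathbb{N}} X_j$ as a countable ascending union of finite subcomplexes (with $x_0\in X_j$ for $j$ large), and the natural map
\[\mathrm{colim}_j\, \pi_n(X_j,x_0) \longrightarrow \pi_n(X,x_0)\]
is a bijection. A countable colimit of countable sets is countable, so it suffices to prove that $\pi_n$ of a finite CW-complex is countable.

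This in turn reduces, via the fact that every finite CW-complex is homotopy equivalent to a finite simplicial complex, to the case of a finite simplicial complex $K$. By the based simplicial approximation theorem, every based map $\mathrm{S}^n\to|K|$ is based homotopic to a simplicial map $|\mathrm{sd}^r(\partial\Delta^{n+1})|\to|K|$ for some $r\ge 0$, and similarly two such simplicial maps which are continuously homotopic are simplicially homotopic after passing to a common subdivision. Since $\mathrm{sd}^r(\partial\Delta^{n+1})$ and $K$ are both finite simplicial complexes, there are only finitely many simplicial maps between their realisations for each fixed $r$, so taking the countable union over $r\in\mathbb{N}$ shows that $\pi_n(|K|,x_0)$ is countable.

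For part (b), if the set is empty the claim is trivial, so we may pick an extension $f_0\colon \mathrm{D}^n\to X$ of $\varphi$. A classical obstruction-theoretic argument yields a bijection
\[[(\mathrm{D}^n,\partial \mathrm{D}^n),(X,\varphi)]_{\partial} \xrightarrow{\,\cong\,} \pi_n(X, f_0(c))\,,\]
where $c$ is any interior point of $\mathrm{D}^n$ and the map is given by gluing $f$ and $f_0$ along $\partial\mathrm{D}^n$ to obtain a based map $\mathrm{S}^n = \mathrm{D}^n\cup_{\partial\mathrm{D}^n}\mathrm{D}^n\to X$. The path-component of $f_0(c)$ in $X$ is a sub-CW-complex of $X$, hence itself a countable pointed CW-complex, so applying part (a) to it finishes the proof.

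The technical heart is part (a), specifically the reduction via finite subcomplexes to the combinatorial simplicial approximation theorem; this is where countability is actually produced. Part (b) is then essentially a packaging result, modulo the classical bijection between the set of extensions and $\pi_n$, which is standard obstruction theory.
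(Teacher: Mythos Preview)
Your proof is correct. The paper does not actually give an argument: it cites Lundell--Weingram, \emph{The Topology of CW Complexes}, Theorem~6.1 for part (a) and says part (b) ``can be proved similarly.'' Your route for (a) --- pass to the colimit over finite subcomplexes, replace by a finite simplicial complex, and invoke simplicial approximation --- is the standard one and almost certainly what the cited reference does. (Incidentally, for countability you only need the surjection from simplicial maps onto $\pi_n$; the clause about simplicially approximating homotopies is not needed.)

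For part (b) you take a genuinely different tack from what the paper suggests: rather than rerunning the approximation argument relative to a fixed boundary map, you reduce to (a) via the torsor structure on extensions. This is cleaner. One small remark: the bijection you invoke is usually stated with the basepoint on the equator $\partial D^n$ rather than at an interior point $f_0(c)$; your version is still correct because the $f_0$-hemisphere is contractible, so homotopy rel the whole hemisphere agrees with homotopy rel any single interior point, but you might want to say a word about this or simply place the basepoint on $\partial D^n$ to make the citation of the ``classical obstruction-theoretic argument'' more immediate.
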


\begin{proof} Part (a) is contained in Theorem 6.1 of \cite{lundell-weingram}. Part (b) can be proved similarly. 
\end{proof}

\begin{proof}[Proof of Proposition~\ref{countable}.] It is obviously necessary that $\C$ is equivalent to a countable category: For any object $c$, the $0$-th singular homology of $X(c)\cong R\mathrm{map}_\C(\underline{c},X)$ is a well-defined functor $H_c$ on $\SWC$. The composition with the Yoneda embedding,
\[\Cop\longrightarrow \mathrm{Fun}_{\mathrm{fin.CW}}(\C,\SP) \longrightarrow \SWC\xrightarrow{(H_c)_c} \mathrm{Fun}(\C,\mathrm{Ab})\]
is the Yoneda embedding which is fully faithful. It follows that the composition of the first two functors sends non-isomorphic objects to non-isomorphic objects and is faithful.

For the sufficiency, it is obviously enough to show that the category of finite pointed $\C$-CW-complexes, with homotopy classes of maps, is countable, compare Lemma~\ref{SW=SW}. Note that for a countable $\C$-CW-complex $X$, all $X(c)$ are themselves countable CW-complexes, because of the condition that $\C$ has countable morphism sets.

First, we show that there are only countably many homotopy types of objects $X$, via induction on the number of cells of $X$. There are only countably many $0$-dimensional CW-complexes since $\mathrm{Ob}(\C)$ is countable. Now, we suppose that $X$ is given and we want to show that there are only countably many possibilities to attach one further cell. This amounts to choosing an object $c$ (countably many choices) and a based homotopy class  of an attaching map $\mathrm{S}^n_+\wedge \underline{c}\rightarrow X$. But these are in bijection with free homotopy classes $\mathrm{S}^n\rightarrow X(c)$ which is a quotient of $\pi_n(X(c))$ and thus countable by Lemma~\ref{countableCW} (a).

The countability of the morphism sets follows similarly from Lemma~\ref{countableCW} (b).\end{proof}

\section{The rational case}\label{sec:rational}

In this section, for technical reasons, we treat homology theories of $\C$-simplicial sets instead of $\C$-spaces. The results apply to topological spaces, too, since we may apply the geometric realisation functor objectwise. Our Theorem~\ref{representation} holds true also in this setting, yielding that any homology theory  $h^{\C}_\ast$ is of the form $h^{\C}_\ast(-;E)$ for some $E\colon \Cop\rightarrow \mathrm{Sp}^{\Sigma}_{\mathrm{sSet}}$.

Now, suppose that the homology theory $h^{\C}_\ast\cong h_{\ast}^\C(-;E)$ is rational, i.\ e. takes values in $\mathbb{Q}$-vector spaces. By plugging in corepresentable functors $\underline{c}$, it follows that all spectra $E(c)$ have rational homotopy groups for all $c$. Thus the natural map
\[E\rightarrow H\mathbb{Q}\wedge E\]
is a weak equivalence of $\C$-spectra. Note that the right-hand side is not only a functor from $\C$ to spectra, but to $H\mathbb{Q}$-modules. The \emph{stable Dold-Kan correspondence}, discussed in Subsection~\ref{DK}, links these to chain complexes.

We have thus arrived in a purely algebraic setting. More precisely, we study modules over a certain category algebra $\mathbb{Q}\C$, cf.\ Subsection~\ref{sec:QC-mod}. One of the tools that is available here, and was not available in the case of spectra, is the K\"unneth spectral sequence for a tensor product of chain complexes. We use this to prove the existence of a Chern character in the case of flat coefficients, cf.\ Corollary~\ref{chern}. The flatness hypothesis is true in the homological case if the coefficients extend to Mackey functors, as discussed in Subsection~\ref{mackey}. Another approach, based on the work of Liping Li on hereditary category algebras \cite{li}, is presented in Subsection~\ref{hereditary}. This approach has no hypothesis on the homology theory, but on the  category $\C$. 

\subsection{The stable Dold-Kan correspondence}
\label{DK}

We work with the paper \cite{shipley07} which realises the stable Dold-Kan correspondence as a zig-zag of weak monoidal Quillen equivalences (left adjoints on top)
\begin{align}\label{zig-zag}
    H\mathbb{Q}\mathcal{-M}od \xrightleftarrows[\phantom{b}U\phantom{b}]{Z} \mathrm{Sp}^{\Sigma}(s\mathrm{Vect}_{\mathbb{Q}}) \xleftrightarrows[\phi^{\ast}N]{\phantom{b}L\phantom{b}} \mathrm{Sp}^{\Sigma}(\mathrm{ch}^{+}_{\mathbb{Q}}) \xrightleftarrows[\phantom{b}R\phantom{b}]{D} \mathrm{Ch}_{\mathbb{Q}}\,.
\end{align}

The paper constructs these functors over a general ring $R$ (and concentrates on $R=\mathbb{Z}$ in some parts of the exposition), but we will only need the special case $R=\mathbb{Q}$. The four model categories used here were introduced in Subsection~\ref{spectra}. For the definition of the various functors, we refer to \cite{shipley07}. The definitions of some of them will be recalled in the proof of Proposition~\ref{derived_composition}. They have the special property that all right adjoints preserve all weak equivalences. This passes to the functor categories and has the consequence that no fibrant replacements are necessary when the derived functor is computed. 

For any ($\mathrm{Set}$-enriched) category $\C$, we get Quillen equivalences
\begin{align*}
    \mathrm{Fun}(\C,H\mathbb{Q}\mathcal{-M}od) \xrightleftarrows[U_\ast]{Z_\ast} \mathrm{Fun}(\C,\mathrm{Sp}^{\Sigma}(s\mathrm{Vect}_{\mathbb{Q}})) \xleftrightarrows[(\phi^{\ast}N)_\ast]{L_\ast} \mathrm{Fun}(\C,\mathrm{Sp}^{\Sigma}(\mathrm{ch}^{+}_{\mathbb{Q}})) \xrightleftarrows[R_\ast]{D_\ast} \mathrm{Fun}(\C,\mathrm{Ch}_{\mathbb{Q}})\,.
\end{align*}
By the discussion in Subsection~\ref{sec:comparison}, we thus get an equivalence of categories
\begin{align*}
    \mathrm{Ho}(\mathrm{Fun}(\C,H\mathbb{Q}\mathcal{-M}od))\xrightleftarrows[\Gamma]{\Phi} \mathrm{Ho}(\mathrm{Fun}(\C,\mathrm{Ch}_{\mathbb{Q}})) 
\end{align*}
where $\Phi$ and $\Gamma$ respect derived balanced smash products and mapping spectra, and are given by
\[\Phi=D_\ast Q (\phi^{\ast}N)_\ast Z_\ast Q \quad \mathrm{and}\quad \Gamma=U_\ast L_\ast Q R_\ast\,.\]
For a based simplicial set $A$, let $\widetilde{\mathbb{Q}}A$ denote the simplicial $\mathbb{Q}$-vector space which is the reduced linearisation of $A$, i.\ e. it has as a basis in degree $n$ the set of non-basepoint $n$-simplices $A_n\setminus\{\ast\}$. Furthermore, let $N\colon s\mathrm{Vect}_{\mathbb{Q}}\rightarrow \mathrm{ch}^{+}_{\mathbb{Q}}$ denote the normalised chain complex functor.

\begin{prop}\label{derived_composition}
If $X$ is a based simplicial $\C$-set, then there is a natural isomorphism
\[\Phi(H\mathbb{Q}\wedge \Sigma^{\infty}X) \cong N\widetilde{\mathbb{Q}}X\,.\]
\end{prop}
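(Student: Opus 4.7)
The plan is to reduce to the single-object case $\C = \ast$, where the statement is essentially the known stable Dold-Kan correspondence, and then obtain the general version by checking that both sides of the proposed isomorphism can be computed pointwise in $c \in \C$.

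First, I would verify the pointwise compatibility: for each $c \in \C$ and each $Y \in \mathrm{Fun}(\C, H\mathbb{Q}\mathcal{-M}od)$ there is a natural weak equivalence $\Phi(Y)(c) \simeq \Phi_0(Y(c))$, where $\Phi_0$ denotes the composite $D \circ Q \circ \phi^{\ast}N \circ Z \circ Q$ in the unindexed case. This uses three ingredients: (i) projective cofibrant replacements in $\mathrm{Fun}(\C, \mathcal{M})$ are objectwise cofibrant (Hirschhorn, already cited in the excerpt); (ii) $Z_\ast$ and $D_\ast$ are left Quillen for the projective model structures, since their right adjoints $U_\ast$ and $R_\ast$ are applied objectwise and preserve objectwise fibrations and trivial fibrations; and (iii) $(\phi^\ast N)_\ast$ preserves all objectwise weak equivalences, because $\phi^\ast$ and $N$ do (this is the key property of Shipley's zig-zag that makes fibrant replacements unnecessary). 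Combining these shows that each step of $\Phi$ commutes, up to natural weak equivalence, with evaluation at $c$.

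Second, I would prove the single-object identity $\Phi_0(H\mathbb{Q} \wedge \Sigma^\infty A) \cong N\widetilde{\mathbb{Q}} A$ for a based simplicial set $A$. The most direct path is to trace $H\mathbb{Q} \wedge \Sigma^\infty A$ through Shipley's zig-zag: applying $Z$ yields the symmetric spectrum of simplicial $\mathbb{Q}$-vector spaces whose $n$-th level is naturally $\widetilde{\mathbb{Q}}(S^n \wedge A)$; applying $\phi^\ast N$ normalises levelwise to give a symmetric spectrum of non-negatively graded $\mathbb{Q}$-chain complexes; and applying $D$ produces the associated unbounded chain complex, which simplifies to $N\widetilde{\mathbb{Q}} A$ after cofibrant replacement. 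Alternatively, one can observe that the two composites $\mathrm{sSet}_\ast \to \mathrm{Ch}_{\mathbb{Q}}$ --- namely $A \mapsto \Phi_0(H\mathbb{Q} \wedge \Sigma^\infty A)$ and $A \mapsto N\widetilde{\mathbb{Q}} A$ --- are both left-derived functors of strong symmetric monoidal left adjoints agreeing on $A = S^0$, and invoke Shipley's comparison results to conclude they are naturally isomorphic in the homotopy category.

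Third, assembling the pointwise isomorphisms yields the claimed natural isomorphism of functors $\C \to \mathrm{Ch}_{\mathbb{Q}}$; naturality in morphisms of $\C$ is automatic from the naturality of every construction involved in $A$. The step I expect to be the main obstacle is the first one: one must carefully check that the \emph{intermediate} projective cofibrant replacement, namely the $Q$ occurring between $(\phi^\ast N)_\ast$ and $D_\ast$, interacts correctly with evaluation at $c$. Since the object $(\phi^\ast N)_\ast Z_\ast Q(X)$ is only objectwise cofibrant (having been built from objectwise cofibrant data by functors applied objectwise, without $(\phi^\ast N)_\ast$ preserving projective cofibrations), one must argue that its projective cofibrant replacement, restricted to $c$, is still a cofibrant replacement of the underlying spectrum; this follows from the Hirschhorn characterisation together with the fact that objectwise weak equivalences are detected componentwise.
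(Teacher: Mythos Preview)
Your reduce-and-assemble strategy is different from the paper's, and it has a genuine gap in step~3. The isomorphism you need lives in $\mathrm{Ho}(\mathrm{Fun}(\C,\mathrm{Ch}_{\mathbb{Q}}))$, not in $\mathrm{Fun}(\C,\mathrm{Ho}(\mathrm{Ch}_{\mathbb{Q}}))$. Having, for each $c$, an isomorphism $\Phi(Y)(c)\cong N\widetilde{\mathbb{Q}}X(c)$ in $\mathrm{Ho}(\mathrm{Ch}_{\mathbb{Q}})$, even naturally in $c$, does \emph{not} yield an isomorphism of $\C$-diagrams in the homotopy category: you need an actual zig-zag of morphisms of $\C$-diagrams, each an objectwise weak equivalence. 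Your claim that ``naturality in morphisms of $\C$ is automatic'' glosses over exactly this point. The gap is fatal for your second option in step~2 (the abstract characterisation via derived monoidal functors), since that only produces an isomorphism in the homotopy category of chain complexes with no point-set zig-zag to promote. Your first option in step~2 (trace through Shipley's functors explicitly) \emph{can} be made to work, but only if every weak equivalence you write down is natural in the input simplicial set $A$; once you do that, you have written the paper's proof and the detour through $\C=\ast$ buys nothing.

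The paper avoids this entirely by working with $\C$-diagrams throughout. It first replaces $X$ by a projectively cofibrant simplicial $\C$-set (harmless since $N\widetilde{\mathbb{Q}}$ preserves all weak equivalences), so that $H\mathbb{Q}\wedge\Sigma^{\infty}X$ is already projectively cofibrant and the first $Q$ is unnecessary. It then computes $Z_\ast(H\mathbb{Q}\wedge\Sigma^{\infty}X)\cong\widetilde{\mathbb{Q}}\sph\otimes\widetilde{\mathbb{Q}}X$ directly. The crux is the intermediate cofibrant replacement: rather than invoking an abstract $Q$, the paper exhibits an \emph{explicit} projective cofibrant replacement
\[
\mathrm{Sym}(\mathbb{Q}[1])\otimes N\widetilde{\mathbb{Q}}X \xrightarrow{\ \nabla\circ(\phi\otimes\mathrm{id})\ } \phi^{\ast}N(\widetilde{\mathbb{Q}}\sph\otimes\widetilde{\mathbb{Q}}X)
\]
built from Shipley's ring map $\phi$ and the shuffle map $\nabla$, and checks (by a cell-induction on $X$) that the source is projectively cofibrant and the map is a level weak equivalence. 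Finally $D$ applied to the suspension spectrum $\mathrm{Sym}(\mathbb{Q}[1])\otimes N\widetilde{\mathbb{Q}}X$ returns $N\widetilde{\mathbb{Q}}X$ by Shipley's Lemma~4.6. Every map here is a map of $\C$-diagrams by construction, so no assembly step is needed. Incidentally, you misidentify the main obstacle: the interaction of the second $Q$ with evaluation (your worry in step~1) is routine, while the assembly you dismiss as automatic is where your argument actually breaks.
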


\begin{proof} We may assume that $X$ is cofibrant in the projective model structure since $N\widetilde{\mathbb{Q}}\colon \mathrm{sSet}\longrightarrow \mathrm{Ch}_{\mathbb{Q}}$ preserves all weak equivalences \cite[Prop.~2.14]{goerss-jardine}.
We go through the construction of $\Phi$ step by step. The first cofibrant replacement is not needed since $\Sigma^{\infty}X$ is a cofibrant $\C$-spectrum, and thus  $H\mathbb{Q}\wedge \Sigma^{\infty}X$ is a cofibrant $\C$-$H\mathbb{Q}$-module. The functor $Z$ is given by linearising and then using the canonical morphism $\mu\colon \widetilde{\mathbb{Q}}(H\mathbb{Q})\rightarrow \widetilde{\mathbb{Q}}\sph$ to turn the result into a $\widetilde{\mathbb{Q}}\sph$-module again. 

We thus have
\begin{align*}
    Z_\ast Q(H\mathbb{Q}\wedge \Sigma^{\infty}X)&=\widetilde{\mathbb{Q}}\sph \otimes_{\widetilde{\mathbb{Q}}(H\mathbb{Q})}\widetilde{\mathbb{Q}}(H\mathbb{Q}\wedge \Sigma^{\infty}X)\\ &\cong \widetilde{\mathbb{Q}}\sph\otimes_{\widetilde{\mathbb{Q}}(H\mathbb{Q})} \widetilde{\mathbb{Q}}(H\mathbb{Q})\otimes_{\widetilde{\mathbb{Q}}\sph} \widetilde{\mathbb{Q}}(\Sigma^{\infty}X)\\
    & \cong \widetilde{\mathbb{Q}}(\Sigma^{\infty}X)\,.
\end{align*}
Here we used that the functor $\widetilde{\mathbb{Q}}$ is strong monoidal and commutes with colimits. Note that
\[(\widetilde{\mathbb{Q}}(\Sigma^{\infty}X))_n\cong \widetilde{\mathbb{Q}}\mathrm{S}^n\otimes \widetilde{\mathbb{Q}}X\,,\]
which we refer to as $\widetilde{\mathbb{Q}}(\Sigma^{\infty}X)=\widetilde{\mathbb{Q}}\sph\otimes \widetilde{\mathbb{Q}}X$.

Next, we apply the functor $\phi^{\ast}N$ objectwise. Here $N$ is the normalised chain complex functor as introduced above, which sends $\widetilde{\mathbb{Q}}(\Sigma^{\infty}X)$ to a $N(\widetilde{\mathbb{Q}}\sph)$-module in the category of symmetric sequences of positive chain complexes. This becomes a module over $\mathrm{Sym}(\mathbb{Q}[1])$ (i.\ e., a symmetric spectrum) via a ring homomorphism
\[\phi\colon \mathrm{Sym}(\mathbb{Q}[1]) \rightarrow N(\widetilde{\mathbb{Q}}\sph)\]
specified on p.~358 of \cite{shipley07}. This ring map is not an isomorphism (it corresponds to a subdivision of a cube into simplices), but a weak equivalence, cf.\ the proof of Shipley's Proposition~4.4. 

Next, we show that $N\widetilde{\mathbb{Q}}X$ is a cofibrant $\C$-chain complex. Since $N$ is an equivalence of categories, it commutes with colimits, and so does $\widetilde{\mathbb{Q}}$. Thus the assertion follows inductively from the fact that $N\widetilde{\mathbb{Q}}(\mathrm{S}^{n-1})_+\rightarrow N\widetilde{\mathbb{Q}}(\mathrm{D}^n)_+$ is a cofibration. The latter is readily checked since cofibrations of chain complexes over the field $\mathbb{Q}$ are just monomorphisms.

A similar inductive argument shows that $\mathrm{Sym}(\mathbb{Q}[1])\otimes N\widetilde{\mathbb{Q}}X$ is cofibrant in \goodbreak $\mathrm{Fun}(\C,\mathrm{Sp}^{\Sigma}(\mathrm{ch}^{+}_{\mathbb{Q}}))$ and that $\phi$ induces a weak equivalence
\[\phi\otimes \mathrm{id}\colon \mathrm{Sym}(\mathbb{Q}[1])\otimes N\widetilde{\mathbb{Q}}X\longrightarrow N\widetilde{\mathbb{Q}}\sph \otimes N\widetilde{\mathbb{Q}}X\,.\]

From the right-hand side we go on with the shuffle map of \cite[2.7]{SSAGT}, applied levelwise:
\[\nabla\colon N\widetilde{\mathbb{Q}}\sph \otimes N\widetilde{\mathbb{Q}}X \longrightarrow N(\widetilde{\mathbb{Q}}\sph \otimes \widetilde{\mathbb{Q}}X)\,.\]
The shuffle map is always a quasi-isomorphism on the level of chain complexes (even a homotopy equivalence with homotopy inverse the Alexander-Whitney map), thus it induces a weak equivalence on each level. To see that it is a morphism of symmetric spectra, i.\ e. $\mathrm{Sym}(\mathbb{Q}[1])$-modules, it suffices to show that it is a morphism of $N\widetilde{\mathbb{Q}}\sph$-modules. This is an easy diagrammatic check using the fact that $N$ is a lax monoidal transformation \cite[p.~256]{SSAGT}.  Summarising, we have constructed a cofibrant replacement
\[\mathrm{Sym}(\mathbb{Q}[1])\otimes N\widetilde{\mathbb{Q}}X \xrightarrow[\sim]{\nabla \circ (\phi \otimes \mathrm{id})} \phi^{\ast}N(\widetilde{\mathbb{Q}}\sph\otimes \widetilde{\mathbb{Q}}X)\,.\]
The last step is to apply the functor $D$ objectwise to the left-hand side. But this is objectwise just the suspension spectrum of $N\widetilde{\mathbb{Q}}X(?)$, and $D$ applied to the suspension spectrum of a chain complex yields just the chain complex itself by \cite[Lemma~4.6]{shipley07}. (Suspension spectra are denoted by $F_0$ in Shipley's paper.) 
\end{proof}

\subsection{Rational $\C$-modules and nondegenerate $\mathbb{Q}\C$-modules}
\label{sec:QC-mod}

In the rational case, Theorem~\ref{representation} says that a $\C$-homology theory always comes from a functor $E\colon\Cop\rightarrow \mathrm{Ch}_{\mathbb{Q}}$.
Note that this is the same as a chain complex of functors from $\Cop$ to $\mathbb{Q}$-vector spaces. We now take a closer look at this additive category.

Let $\C$ be a small category enriched in $\mathbb Q$-vector spaces. (Everything holds true over an arbitrary commutative ground ring, though.) 

\begin{defi}\label{def:cat-algebra}
The \emph{category algebra} $R=\mathbb{Q}\C$ of $\C$ is given by
\[\bigoplus_{c,d\in \C} \mathrm{Hom}_\C(c,d)\,,\]
with multiplication defined by bilinear extension of the relations
\[g\cdot f = \begin{cases} gf &\mbox{if $g$, $f$ are composable}\\ 0 & \mbox{else}\end{cases}\,. \]
\end{defi}
If $\C$ happens to be the free $\mathbb{Q}$-linear category on a ($\mathrm{Set}$-enriched) category, we have the presentation
\[R = \mathbb{Q}\C\cong \mathbb{Q}\left\langle e_f \,\mbox{for $f\colon c\rightarrow d$}\,\middle\vert\,  e_ge_f=\begin{cases}e_{gf}&\mbox{if $g$, $f$ are composable}\\ 0 & \mbox{else}\end{cases}\right\rangle\,,\]
where the angle brackets indicate that we take the quotient of the free (non-commutative) algebra over the $e_f$ by the said relations. 
If $\C$ has only finitely many objects, the category algebra $R$ has a unit $\sum\limits_{c\in\mathrm{Ob}(\C)} \mathrm{id}_c$. 
For general (i.\ e. non-object-finite) $\mathcal{C}$, $R$ has only an approximate unit in the sense defined below.
Recall that a net in a set $S$ is a map $I\rightarrow S$ where $I$ is a directed set, i.\ e. a partially ordered set in which any two elements  have a common upper bound.

\begin{defi}
A ring $S$ has an \emph{approximate unit} if there is a net $(e_i)_{i \in I}$ of idempotents in $S$ with the following two properties:
\begin{itemize}
\item For every $s\in S$, there is some $i$ such that $e_is=s=se_i$.
\item For $i\le j$, we have $e_j e_i = e_i e_j= e_i$.
\end{itemize}
A left $S$-module $M$ is called non-degenerate if $SM=M$. Equivalently, if for every $m\in M$ there is some $i$ such that $e_im=m$. The category of non-degenerate left $S$-modules and $S$-linear maps is denoted $\mathcal{NM}od_S$.
\end{defi}

\begin{lemma}\label{non-deg-tensor} Let $S$ be a ring with  approximate unit.\\
(a)  If $M$ is a non-degenerate left $S$-module, then there is a natural isomorphism of $S$-modules
\[S\otimes_S M \cong M\,.\]
(b) A non-degenerate left $S$-module $P$ which is projective in the category of non-degenerate left $S$-modules is flat in the sense that $-\otimes_S P$ is an exact from non-degenerate $S$-modules to abelian groups.
\end{lemma}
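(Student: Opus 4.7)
For (a), my plan is to build an explicit inverse $\iota$ to the multiplication map $\mu\colon S\otimes_S M \to M$, $s\otimes m \mapsto sm$. Given $m \in M$, non-degeneracy provides some idempotent $e$ from the approximate unit with $em = m$; set $\iota(m) = e \otimes m$. The first point to verify is well-definedness. If $e'$ is a second such choice, pick $f$ in the directed net above both, so that $fe = e$ and $fe' = e'$; then in $S \otimes_S M$
\[e \otimes m \;=\; (fe)\otimes m \;=\; f \otimes (em) \;=\; f \otimes m \;=\; f \otimes (e'm) \;=\; (fe')\otimes m \;=\; e' \otimes m.\]
Additivity, $S$-linearity, and naturality in $M$ all follow from the same trick of passing to a sufficiently large element of the net (for $S$-linearity on $sm$, pick a test idempotent $e$ also satisfying $es = s$, so that $\iota(sm) = e \otimes sm = (es)\otimes m = s \otimes m = s \cdot \iota(m)$ up to the defining choice). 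The compositions $\mu\iota$ and $\iota\mu$ are identities by direct inspection. I expect no substantial obstacle here.

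For (b), the plan is the standard ``summand of a free module'' argument, adapted to the non-unital setting. For every $m \in P$ choose $e_m$ in the approximate unit with $e_m m = m$, and consider
\[F \;=\; \bigoplus_{m\in P} S e_m,\]
each summand being non-degenerate. The $S$-linear map $F\to P$ sending $s \in Se_m$ to $sm$ is surjective because $m$ is the image of $e_m$; projectivity of $P$ in $\mathcal{NM}od_S$ then splits it and realises $P$ as a direct summand of $F$.

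It then suffices to show that $F$ is flat on non-degenerate (right) $S$-modules. Each $Se_m$ is a direct summand of $S$ as a left $S$-module, via the $S$-linear retraction $s \mapsto s e_m$ (using $e_m^2 = e_m$); and by the right-hand analogue of part~(a), the functor $- \otimes_S S$ is naturally the identity on non-degenerate $S$-modules, hence exact. Since exactness is preserved under retracts and arbitrary direct sums, $F$ is flat, and so is $P$ as a retract of $F$. The one point I need to be careful about is that the naive splitting $S = Se \oplus S(1-e)$ is unavailable without a unit; phrasing things through the retraction $s \mapsto se_m$ circumvents this, and this is really the main (mild) obstacle in the argument.
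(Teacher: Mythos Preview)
Your proof is correct and follows essentially the same approach as the paper's. For (a) the arguments are identical (the paper calls your $\mu,\iota$ by $f,g$ and skips the direct $S$-linearity check for $g$, since it follows once $f$ is a bijective $S$-map); for (b) the paper is slightly more streamlined in that it uses $F=\bigoplus_{m\in P} S$ rather than $\bigoplus_{m} Se_m$---the ring $S$ is itself non-degenerate as a left $S$-module, so your detour through the retracts $Se_m$ (and the worry about $S=Se\oplus S(1-e)$) is unnecessary, though harmless.
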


\begin{proof} \emph{(a)}
Define an $S$-linear map $f\colon S\otimes_S M \longrightarrow M$ by $s\otimes  m \mapsto sm$. A map $g$ (of sets, say) in the other direction is defined as follows: An element $m\in M$ is mapped to $e_i\otimes  m$, where $i\in I$ is such that $e_im=m$. This is well-defined: If $j$ is another such index, choose $k \ge i,j$. Then
\[e_i\otimes  m = (e_k e_i)\otimes  m = e_k\otimes (e_im) = e_k\otimes m=e_j\otimes m\,.\]
It is immediate that $f\circ g$ is the identity. For $g\circ f$, use the fact that $S$ has an approximate unit: Choose $e_i$ with $e_is=s$, then $e_ism=sm$ and
\[g(f(s\otimes m))= e_i \otimes (sm) = s\otimes m\,.\]
\emph{(b)} A non-degenerate $S$-module is a quotient of a direct sum of left regular representations $S$. If it is projective, then it is a direct summand and hence flat by part (a).
\end{proof} 

For our category algebra $R$, the set $I$ consists of all finite sets of objects of $\C$, ordered by inclusion, and the approximate unit sends $F\in I$ to
\[e_F= \sum\limits_{c\in F} \mathrm{id}_c\,.\]
The following result is essentially \cite[Thm.~7.1]{mitchell}.
\begin{prop}
There is an isomorphism of additive categories
\[\Xi\colon\quad\mathrm{Fun}(\C,\mathrm{Vect}_{\mathbb{Q}}) \longrightarrow \mathcal{NM}od_R\,.\]
There is a similar equivalence between contravariant functors and non-degenerate right modules. If this is also denoted by $\Xi$, then there are natural isomorphisms of $\mathbb{Q}$-vector spaces
\[\Xi(X)\otimes_R \Xi(Y) \cong X\otimes_\C Y\]
for a right $\C$-module $X$ and left $\C$-module $Y$, and
\[\mathrm{Hom}_{R}(\Xi(X), \Xi(Z))\cong \mathrm{Hom}_{\C}(X,Z)\]
for two right $\C$-modules $X$ and $Z$.
\end{prop}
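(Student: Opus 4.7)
The plan is to construct $\Xi$ and its inverse explicitly. On objects, set $\Xi(X)=\bigoplus_{c\in\mathrm{Ob}(\C)}X(c)$, with left $R$-module structure prescribed on generators of $R$: a morphism $f\colon c\to d$ in $\C$ acts as $X(f)$ on the summand $X(c)$ and as zero on all other summands. The multiplication relations $g\cdot f=gf$ (when composable, zero otherwise) are respected since $X$ is functorial. On morphisms, $\Xi$ sends a natural transformation $\alpha\colon X\to X'$ to $\bigoplus_c\alpha_c$, and naturality translates directly into $R$-linearity. Any element of $\Xi(X)$ has support in a finite subset $F\subset\mathrm{Ob}(\C)$, on which $e_F=\sum_{c\in F}\mathrm{id}_c$ acts as the identity, so $\Xi(X)$ is non-degenerate.

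The inverse $\Psi$ sends a non-degenerate left $R$-module $M$ to the functor $c\mapsto\mathrm{id}_c\cdot M$, with a morphism $f\colon c\to d$ acting by left multiplication by $f$, which sends $\mathrm{id}_c M$ into $\mathrm{id}_d M$ because $\mathrm{id}_d\cdot f\cdot\mathrm{id}_c=f$. Since the $\mathrm{id}_c$ are orthogonal idempotents, the summands $\mathrm{id}_c M$ meet only in zero, and non-degeneracy yields $M=\sum_c\mathrm{id}_c M$, so $M=\bigoplus_c\mathrm{id}_c M$ naturally in $M$; hence $\Xi\Psi\cong\mathrm{id}$, while $\Psi\Xi=\mathrm{id}$ on the nose. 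The same construction with the handedness swapped handles contravariant functors and non-degenerate right $R$-modules.

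For the tensor product isomorphism, observe first that in $\Xi(Y)\otimes_R\Xi(X)$ the off-diagonal summands vanish: for $y\in Y(c)$ and $x\in X(d)$ with $c\ne d$ one has $y\otimes x=y\cdot\mathrm{id}_c\otimes x=y\otimes\mathrm{id}_c\cdot x=0$. The tensor product therefore reduces to the quotient of $\bigoplus_c Y(c)\otimes_\mathbb{Q}X(c)$ by the relations induced by the remaining algebra generators $f\colon c\to d$, which read $Y(f)(y)\otimes x=y\otimes X(f)(x)$, reproducing exactly the coequaliser relations defining $Y\otimes_\C X$. For the Hom isomorphism, $R$-linearity of $\phi\colon\Xi(X)\to\Xi(Z)$ with respect to each idempotent $\mathrm{id}_c$ forces $\phi$ to respect the summand decomposition, giving components $\phi_c\colon X(c)\to Z(c)$; $R$-linearity with respect to an element $f\colon c\to d$ of $\C$ translates into the naturality square $\phi_d\circ X(f)=Z(f)\circ\phi_c$, so the $\phi_c$ assemble into a natural transformation $X\to Z$. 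The only genuine subtlety throughout is compatibility with an infinite $\mathrm{Ob}(\C)$: the non-degeneracy hypothesis does all the work here, replacing the missing unit of $R$ by the approximate unit $(e_F)$ precisely where needed.
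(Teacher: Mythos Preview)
Your proof is correct and follows essentially the same approach as the paper: both construct $\Xi$ via the direct sum $\bigoplus_c X(c)$, build the inverse via $c\mapsto \mathrm{id}_c\cdot M$, and verify the compositions using orthogonality of the idempotents and non-degeneracy; you even spell out the tensor and Hom compatibilities that the paper dismisses as ``straightforward''. One small slip: in the tensor and Hom paragraphs you have silently swapped the variances (writing $\Xi(Y)\otimes_R\Xi(X)$ and a covariant naturality square, whereas the statement has $X$ contravariant and $Y$ covariant), but this is purely notational and the argument goes through once the sides are relabelled.
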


\begin{proof}
The equivalence is defined as follows: If $X\colon \C\rightarrow \mathrm{Vect}_{\mathbb{Q}}$ is a functor, define \[\Xi(X) = \bigoplus_{c\in \mathrm{Ob}(\C)} X(c)\]
with the action of $(f: c_0 \rightarrow d_0)\in R$ on $x=(x_c)_c$ given by
\[(f\cdot x)_d = 
\begin{cases}
X(f)(x_{c_0}) & \mathrm{if}\, d=d_0\\
0 & \mathrm{else.}
\end{cases}\]
This yields a non-degenerate $R$-module: Every element lies in some vector subspace $\bigoplus\limits_{c\in F} X(c)$, where $F$ is a finite set of objects, and $e_F$ acts as the identity on this subspace.

An inverse equivalence
\[\Pi\colon\quad \mathcal{NM}od_R \longrightarrow\mathrm{Fun}(\C,\mathrm{Vect}_{\mathbb{Q}})\]
is constructed as follows: If $M$ is a non-degenerate $R$-module, let
\[(\Pi(M))(c)=\mathrm{id}_c M\,.\]
A morphism $f\colon c\rightarrow d$ induces a linear map $\mathrm{id}_cM\rightarrow \mathrm{id}_d M$ since $f=\mathrm{id}_d f$.

It is easy to check that $\Pi\Xi$ is the identity. For the other composition, note that there is a natural map
\[\Xi(\Pi(M))=\bigoplus_{c\in \mathrm{Ob}(\C)} \mathrm{id}_c M \rightarrow M\]
induced by the inclusions. This will be an injective $R$-linear map in general since the $\mathrm{id}_c$ are orthogonal idempotents. If $M$ is non-degenerate, it is surjective.

The two asserted natural isomorphisms are straightforward.
\end{proof}

\begin{rema}\label{2model}
The category \[\mathrm{Ch}(\mathcal{NM}od_R) \cong \mathrm{Fun}(\C, \mathrm{Ch}_{\mathbb{Q}})\] can be endowed with a model structure in (at least) two ways. The first one is just the projective model structure as a functor category, coming from the projective model structure on $\mathrm{Ch}_{\mathbb{Q}}$. The second one is the projective model structure on chain complexes over $\mathcal{NM}od_R$. This model structure (for abelian categories different from modules over a unital ring) is defined in \cite[Sec.~3]{hovey:ch}. The hypotheses of \cite[Thm.~3.7]{hovey:ch} are satisfied here since $R$ generates $\mathcal{NM}od_R$ in the sense that $\mathcal{NM}od_R(R,-)$ is faithful. One can easily check that these two model structures coincide. 
\end{rema}

The discussion of this section allows us, in the rational case, to state Theorem~\ref{representation} in a completely algebraic way.

\begin{cor}\label{rat_unsimplified} If $E$ is a chain complex of right $R$-modules, then 
\begin{align}\label{rational_tensor} h_{\ast}^{\C}(X;E) = \mathrm{H}_{\ast}(E\otimes_R \overline{X})\,.\end{align}
defines a rational reduced $\C$-homology theory. Here \[\overline{X}=QN\widetilde{\mathbb{Q}}\mathrm{Sing}(X)\cong N\widetilde{\mathbb{Q}}(Q(\mathrm{Sing}(X)))\]
denotes a cofibrant replacement of $N\widetilde{\mathbb{Q}}\mathrm{Sing}(X)$ and  { $\mathrm{Sing}\colon \mathrm{Top}_{\ast}\rightarrow \mathrm{sSet}_{\ast}$} denotes the singular simplicial complex functor. 

Conversely, if $h_{\ast}^{\C}$ is a rational $\C$-homology theory, then there is a chain complex $E$, and a natural isomorphism of homology theories as above.
\end{cor}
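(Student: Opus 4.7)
The plan is to deduce both halves of the corollary from Theorem~\ref{representation} by transporting the Davis--L\"uck construction along the stable Dold--Kan correspondence of Subsection~\ref{DK}.

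For the converse direction, starting from a rational $\C$-homology theory $h_\ast^{\C}$, I would first invoke Theorem~\ref{representation} in the simplicial setting (cf.\ the opening of Section~\ref{sec:rational}) to obtain a $\Cop$-spectrum $F\colon \Cop \to \mathrm{Sp}^{\Sigma}_{sSet}$ with $h_n^{\C}(X) \cong \pi_n(F \wedge^L_{\C} \Sigma^{\infty}\mathrm{Sing}(X))$. Plugging in corepresentable functors shows that each $F(c)$ has rational homotopy groups, so the rationalisation map $F \to H\mathbb{Q}\wedge F$ is a weak equivalence, and I may replace $F$ by its $H\mathbb{Q}$-module rationalisation. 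I then view the balanced smash product in the closed bicategorical setup built from $H\mathbb{Q}$-modules (which satisfies the hypotheses of Subsection~\ref{spectra} by Lemma~\ref{cats-of-spectra-2}), using the identity $F \wedge^L_{\C} \Sigma^{\infty}\mathrm{Sing}(X) \simeq F \wedge^L_{\C} (H\mathbb{Q}\wedge \Sigma^{\infty}\mathrm{Sing}(X))$ that reexpresses the former over $H\mathbb{Q}$.

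Next, I would apply the functor $\Phi$ from the zig-zag~\eqref{zig-zag} of weak monoidal Quillen equivalences. By the compatibility of $\Phi$ with balanced derived smash products (Subsection~\ref{sec:comparison}, isomorphism~\eqref{Phitensor}) and by Proposition~\ref{derived_composition} applied to $\mathrm{Sing}(X)$, this gives
\[h_n^{\C}(X) \cong H_n\bigl(\Phi(F)\otimes^L_{\C} N\widetilde{\mathbb{Q}}\mathrm{Sing}(X)\bigr).\]
Transporting along the isomorphism of additive categories $\Xi$, which intertwines balanced tensor products of $\C$-modules with those of $R$-modules, and setting $E = \Xi(\Phi(F))$, I would arrive at $H_n(E \otimes^L_R N\widetilde{\mathbb{Q}}\mathrm{Sing}(X)) \cong H_n(E \otimes_R \overline{X})$, the last step using that $\overline{X}$ is a cofibrant replacement of $N\widetilde{\mathbb{Q}}\mathrm{Sing}(X)$.

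For the first half of the corollary, I would run the same chain of isomorphisms in reverse: given $E$, set $F = \Gamma(\Xi^{-1}(E))$, a $\Cop$-spectrum of $H\mathbb{Q}$-modules; then the identifications above match~\eqref{rational_tensor} with the Davis--L\"uck homology theory $h_\ast^{\C}(-;F)$ of Lemma~\ref{davis-lueck-lemma}, which in turn becomes a homology theory on pointed topological spaces by composing with $\mathrm{Sing}$. The principal obstacle will be the model-theoretic bookkeeping highlighted in Remark~\ref{2model}: one needs $\Xi$ to identify cofibrant replacements in $\mathrm{Fun}(\C,\mathrm{Ch}_\mathbb{Q})$ with those in $\mathrm{Ch}(\mathcal{NM}od_R)$, so that the underived expression $E \otimes_R \overline{X}$ really computes the derived tensor product. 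A secondary delicate point is keeping the $H\mathbb{Q}$-module structure consistent when passing through~\eqref{zig-zag}, which is precisely the issue that Subsection~\ref{sec:change} was designed to handle.
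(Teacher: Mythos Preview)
Your proposal is correct and is precisely the argument the paper intends: the corollary has no separate proof in the paper because it is the summary of the preceding Subsections~\ref{DK} and~\ref{sec:QC-mod}, assembled exactly as you describe---apply Theorem~\ref{representation}, rationalise via $E\to H\mathbb{Q}\wedge E$, transport along $\Phi$ using Proposition~\ref{derived_composition} and the compatibility~\eqref{Phitensor}, and pass through $\Xi$ (with Remark~\ref{2model} handling the model-structure comparison). One notational slip: the displayed ``identity'' should read $F\wedge^L_\C \Sigma^\infty\mathrm{Sing}(X)\simeq F\wedge^L_{H\mathbb{Q},\C}(H\mathbb{Q}\wedge\Sigma^\infty\mathrm{Sing}(X))$, i.e.\ the right-hand balanced smash is taken in $H\mathbb{Q}$-modules, which is indeed what the strong monoidal Quillen equivalence $H\mathbb{Q}\wedge(-)\dashv U$ together with Subsection~\ref{sec:comparison} gives you.
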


\begin{rema} In the second part of the theorem, if $E$ is cofibrant (as a chain complex over $\mathcal{NM}od_R$), one might take $N\widetilde{\mathbb{Q}}\mathrm{Sing}(X)$ instead of its cofibrant replacement $\overline{X}$. This is due to the fact that in the aformentioned model category, tensoring with a cofibrant chain complexes preserves weak equivalences, by Lemma~\ref{resolve_one_cc}, so we get an analogue of Corollary~\ref{flat}. However, we will mainly use~\eqref{rational_tensor} in the form with $\overline{X}$ since this allows us to manipulate $E$.
\end{rema}

\begin{lemma}\label{resolve_one_cc}
If $X$ is a cofibrant chain complex over $\mathcal{NM}od_R$, then tensoring with $X$ preserves weak equivalences.
\end{lemma}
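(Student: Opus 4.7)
The plan is to mirror Proposition~\ref{resolve_one}, but the argument will be considerably cleaner because we now work in an abelian setting. Write $R = \mathbb{Q}\C$. The projective model structure on $\mathrm{Ch}(\mathcal{NM}od_R)$ from Remark~\ref{2model} is cofibrantly generated, and I would take as generating (trivial) cofibrations the familiar maps $S^{n-1}P \hookrightarrow D^n P$ (resp.\ $0 \hookrightarrow D^n P$), where $P$ ranges over the projective generators $\Xi(\underline{c}) = \mathrm{id}_c R$, $c \in \mathrm{Ob}(\C)$. The first step is to verify the claim on the domains and codomains of the generators: by Lemma~\ref{non-deg-tensor}(b) each $P$ is flat over $R$, so the bounded complexes $S^n P$ and $D^n P$ are complexes of flats, and tensoring with them manifestly preserves quasi-isomorphisms.

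Next, I would invoke the small object argument to write any cofibrant $X$ as a retract of a transfinite cell complex $X_\beta = \mathrm{colim}_{\alpha < \beta} X_\alpha$ with $X_0 = 0$ and each inclusion $X_\alpha \hookrightarrow X_{\alpha+1}$ a pushout along a coproduct of generating cofibrations, and then argue by transfinite induction that tensoring with $X_\alpha$ preserves quasi-isomorphisms. At a successor step the pushout yields a short exact sequence
\[ 0 \to X_\alpha \to X_{\alpha+1} \to C_\alpha \to 0 \]
in $\mathrm{Ch}(\mathcal{NM}od_R)$, where $C_\alpha$ is a coproduct of shifts $P[n]$. Since $P$ is projective, this sequence is degreewise split and hence stays exact after tensoring with any $Y$; the five lemma applied to the resulting long exact sequence in homology, together with the inductive hypothesis for $X_\alpha$ and the first step applied to $C_\alpha$, then propagates the property to $X_{\alpha+1}$. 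At a limit ordinal, the transfinite composition is a filtered colimit along degreewise split inclusions, so tensoring with any $Y$ commutes with the colimit and homology commutes with filtered colimits of injections, ensuring that tensoring with $X_\beta$ preserves quasi-isomorphisms. Retracts of complexes with the property obviously also have the property.

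The one technical point I would pay attention to is the degreewise splitting of cell attachments -- this is what allows tensoring to preserve the relevant short exact sequences of complexes -- and it is immediate here because the cells are projective $R$-modules. This is exactly the feature whose absence in the spectrum setting forced the introduction of $h$-cofibrations in Proposition~\ref{resolve_one}; its automatic presence in the abelian world makes the current lemma substantially easier, and no analogue of $h$-cofibrations is required.
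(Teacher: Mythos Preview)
Your argument is correct. It differs from the paper's proof, which is much shorter: the paper observes (via the argument of \cite[Lemma~2.3.6]{hovey}) that a cofibrant complex is degreewise projective, then invokes the K\"unneth spectral sequence \cite[Thm.~12.1]{maclane} for bounded-below complexes, and finally handles the unbounded case by a truncation-and-colimit trick (the same truncations $\tau_k$ that appear in the proof of Corollary~\ref{chern}).

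Your route is the cell-by-cell transfinite induction that deliberately mirrors Proposition~\ref{resolve_one}. This is more elementary---no spectral sequence is needed---and it treats the unbounded case directly, without the truncation detour. You are also right to flag the key simplification over the spectral case: degreewise splitting of the cell attachments (coming from projectivity of the $P$'s) replaces the $h$-cofibration machinery. The paper's approach, by contrast, is terser and immediately exploits the abelian structure, but at the cost of importing the K\"unneth spectral sequence and then patching the boundedness hypothesis by hand. Both arguments ultimately rest on the same fact, namely that cofibrant complexes are built from flat pieces; they just organise the bookkeeping differently.
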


\begin{proof}
Note that a cofibrant chain complex is degreewise projective by the argument from \cite[Lemma ~2.3.6]{hovey}. For positive chain complexes, the assertion thus follows from the K\"unneth spectral sequence \cite[Thm.~12.1]{maclane}. In the general case, truncate the chain complexes (the cofibrant one naively, the members of the quasi-isomorphism as in the proof of Corollary~\ref{chern} below) and then pass to the colimit.
\end{proof}

\subsection{Chern characters} \label{sec:chern}

We quickly recall the notion of Bredon homology \cite[Sec.~3]{davis-lueck}. Let $M$ be a right $R$-module. If $X$ is a pointed $\C$-CW-complex, then applying the cellular complex objectwise yields a left $R$-chain complex and the homology of the tensor product 
\[h_n^{\C,\mathrm{Br}}(X;M) = \mathrm{H}_n(M\otimes_R C_\ast^{\mathrm{cell}}(X;\mathbb{Q}))\]
defines a $\C$-homology theory -- use a CW-approximation to extend it to arbitrary $\C$-spaces. 

\begin{defi}
The \emph{coefficient system} of a reduced $\C$-homology theory $h_{\ast}^{\C}$ is the $\mathbb{Z}$-graded right $R$-module given by $h_n^{\C}=h_n^{\C}(\mathrm{S}^0\wedge \underline{c})$.
\end{defi}

The Bredon homology with respect to this coefficient system appears in the Atiyah-Hirzebruch spectral sequence 
\begin{align} \label{ahss} h_p^{\C,\mathrm{Br}}(X;h_q^{\C})\quad \Rightarrow\quad h_n^{\C}(X)\,.\end{align}
It is proved in the same way as in the case $\C=\ast$ \cite{MT}.

\begin{lemma}\label{bredon}
Suppose that the right $R$-chain complex $E$ is given by a right $R$-module $E_0=M$ in degree $0$, and $E_n=0$ otherwise. Then there is a natural isomorphism of homology theories 
\[\mathrm{H}_{\ast}(E\otimes_R \overline{X})\cong h_{\ast}^{\C,\mathrm{Br}}(X;M)\,.\]
\end{lemma}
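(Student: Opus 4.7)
The plan is to realise both sides of the claimed isomorphism as the derived tensor product $M\otimes_R^L N\widetilde{\mathbb{Q}}\mathrm{Sing}(X)$ in $\mathrm{Ch}(\mathcal{NM}od_R)$, computed via two different cofibrant resolutions of $N\widetilde{\mathbb{Q}}\mathrm{Sing}(X)$. Since $E$ is concentrated in degree $0$ as $M$, the left-hand side already reads $M\otimes_R\overline{X}$ with $\overline{X}$ cofibrant by definition; it therefore represents the derived tensor product. It thus suffices to show that Bredon homology $h_{\ast}^{\C,\mathrm{Br}}(X;M)$ is likewise the homology of the tensor product of $M$ with a second cofibrant resolution of $N\widetilde{\mathbb{Q}}\mathrm{Sing}(X)$, namely $C_{\ast}^{\mathrm{cell}}(X;\mathbb{Q})$.

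As a preliminary reduction I would assume $X$ is a pointed $\C$-CW-complex: both sides are weak-equivalence-invariant $\C$-homology theories (the left by Corollary~\ref{rat_unsimplified}, the right by definition, using CW-approximation) so this loses no generality. Under the equivalence $\Xi$ of Subsection~\ref{sec:QC-mod}, each $C_{n}^{\mathrm{cell}}(X;\mathbb{Q})$ is a direct sum of copies of corepresentables $\underline{c}$, one summand per $n$-cell of type $c$, and each such $\underline{c}$ corresponds to the projective -- hence, by Lemma~\ref{non-deg-tensor}(b), flat -- non-degenerate left $R$-module $R\cdot\mathrm{id}_c$. Because $X$ is pointed the complex $C_{\ast}^{\mathrm{cell}}(X;\mathbb{Q})$ is bounded below, so degreewise projectivity upgrades to cofibrancy in the projective model structure on $\mathrm{Ch}(\mathcal{NM}od_R)$ of Remark~\ref{2model}. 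Evaluated at each $c\in\C$ it is the classical cellular chain complex of the CW-complex $X(c)$, and the standard cellular-to-singular comparison supplies an objectwise quasi-isomorphism $C_{\ast}^{\mathrm{cell}}(X;\mathbb{Q})\xrightarrow{\sim} N\widetilde{\mathbb{Q}}\mathrm{Sing}(X)$.

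With two cofibrant resolutions of the same object in hand, I would use the cofibrancy of $\overline{X}$ to lift its own resolution map through the cellular-to-singular quasi-isomorphism, producing a quasi-isomorphism $\overline{X}\xrightarrow{\sim} C_{\ast}^{\mathrm{cell}}(X;\mathbb{Q})$ between cofibrant complexes, hence a chain homotopy equivalence. Tensoring with $M$ preserves chain homotopy equivalences, so the induced map $M\otimes_R\overline{X}\xrightarrow{\sim} M\otimes_R C_{\ast}^{\mathrm{cell}}(X;\mathbb{Q})$ is a quasi-isomorphism (alternatively one can invoke Lemma~\ref{resolve_one_cc} after cofibrantly replacing $M$), and passing to homology yields the claimed natural isomorphism. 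The main obstacle I anticipate is naturality in $X$ at the chain level: the cellular-to-singular comparison is strictly natural only along cellular maps. However, since Bredon homology is itself defined via functorial CW-approximation, naturality in the homotopy category of $\C$-spaces -- which is all that is required for a natural isomorphism of homology theories -- follows automatically.
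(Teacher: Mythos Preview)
Your proposal is correct but takes a genuinely different route from the paper. The paper's proof is a two-line application of the Atiyah-Hirzebruch spectral sequence~\eqref{ahss}: the coefficient system of the left-hand homology theory is $E_k$ in degree $k$, hence vanishes outside degree $0$, so the spectral sequence collapses onto the line $q=0$ and yields the isomorphism directly. Your approach instead realises both sides as the derived tensor product $M\otimes_R^L N\widetilde{\mathbb{Q}}\mathrm{Sing}(X)$ computed via two different cofibrant resolutions, $\overline{X}$ and $C_\ast^{\mathrm{cell}}(X;\mathbb{Q})$.

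Interestingly, the paper explicitly mentions your strategy as an alternative in the remark immediately following the proof, citing a zig-zag of chain complexes between cellular and singular chains from Fuchs--Viro. So you have independently hit upon the alternative the author had in mind. One small point: you assert a direct quasi-isomorphism $C_\ast^{\mathrm{cell}}(X;\mathbb{Q})\to N\widetilde{\mathbb{Q}}\mathrm{Sing}(X)$, but classically this comparison is only a zig-zag natural in cellular maps. Your argument survives this without change: since both $C_\ast^{\mathrm{cell}}(X;\mathbb{Q})$ and $\overline{X}$ are cofibrant and isomorphic in the derived category, general model category nonsense (cofibrant objects in $\mathrm{Ch}$, where everything is fibrant, that are weakly equivalent are chain homotopy equivalent) supplies the chain homotopy equivalence you need.

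As for what each approach buys: the AHSS argument is shorter and more conceptual---it is the equivariant avatar of the classical fact that a homology theory with coefficients concentrated in a single degree is ordinary homology. Your approach is more explicit at the chain level, avoids spectral sequences entirely, and makes the role of $\overline{X}$ as a cofibrant replacement completely transparent.
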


\begin{proof}
The coefficient system of the left-hand homology theory is given by $E_k$ in degree $k$. Since this is $0$ in non-zero degrees, the Atiyah-Hirzebruch spectral sequence~\eqref{ahss} collapses and gives the above isomorphism.
\end{proof}

\begin{rema}
Alternatively to using the Atiyah-Hirzebruch spectral sequence, one could also prove Lemma~\ref{bredon} by using a zig-zag of chain complexes between the singular and the cellular chain complex which is natural (in cellular maps) and induces the isomorphism between singular and cellular homology. This then can be upgraded to $\C$-CW-complexes. Such a zig-zag is constructed on p.~121 of \cite{fuchs-viro}.
\end{rema}

We now turn to the question of existence of Chern characters, which means for us that the homology theory splits into a direct sum of shifted Bredon homology theories. By plugging in suspended representable functors $\mathrm{S}^n\wedge \underline{c}$, one sees that there is only one choice for the coefficient systems in every degree, yielding the following definition.

\begin{defi}\label{def:chern}
Let $h_{\ast}^{\C}$ be a $\C$-homology theory. A \emph{Chern character for $h_{\ast}^{\C}$} is an isomorphism of $\C$-homology theories 
\[h_{n}^\C \cong \bigoplus\limits_{s+t=\ast} h_s^{\C,\mathrm{Br}}(X;h_t^{C})\,.\] Chern characters for $\C$-cohomology theories are defined in the exact same way, also using direct sums.
\end{defi}

\begin{lemma}\label{decompose}
Let $M$ be a right $R$-chain complex. Then a Chern character exists for $h_{\ast}^{\C}(-;M)$ if and only if $M$ is isomorphic to a complex with zero differentials in the derived category of $\mathrm{Ch}(\mathcal{NM}od_R)$.
\end{lemma}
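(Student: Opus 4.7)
The plan is to prove both implications using Corollary~\ref{rat_unsimplified} (the algebraic representation theorem) together with Lemma~\ref{bredon} (which identifies $h_\ast^\C(-;M)$ with Bredon homology when $M$ is concentrated in a single degree).

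For the \emph{if} direction, suppose $M\cong N := \bigoplus_t \Sigma^t H_t(M)$ in the derived category, where each $H_t(M)$ is viewed as a chain complex concentrated in degree $0$. Since tensoring with the cofibrant chain complex $\overline{X}$ preserves quasi-isomorphisms (Lemma~\ref{resolve_one_cc}), the construction $E\mapsto H_\ast(E\otimes_R \overline{X})$ descends to the derived category in the variable $E$. Using that $-\otimes_R-$ commutes with direct sums, one computes
\[ h_n^\C(X;M)\;\cong\;H_n\bigl(N\otimes_R \overline{X}\bigr)\;\cong\;\bigoplus_t H_{n-t}\bigl(H_t(M)\otimes_R \overline{X}\bigr)\;\cong\;\bigoplus_t h_{n-t}^{\C,\mathrm{Br}}(X;H_t(M)), \]
where the last step applies Lemma~\ref{bredon} to each summand. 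Since the coefficient systems of $h_\ast^\C(-;M)$ are by construction the $H_t(M)$, the right-hand side is a Chern character.

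For the converse, assume a Chern character exists and let $N := \bigoplus_t \Sigma^t h_t^\C$ be the zero-differential chain complex having the coefficient system $h_t^\C = H_t(M)$ placed in degree $t$. Applying the computation of the previous paragraph to $N$ in place of $M$ yields $h_\ast^\C(-;N) \cong \bigoplus_t h_{\ast-t}^{\C,\mathrm{Br}}(-;h_t^\C)$, so composing with the Chern character gives an isomorphism of homology theories $h_\ast^\C(-;N) \cong h_\ast^\C(-;M)$. By the morphism clause of Corollary~\ref{rat_unsimplified}, this isomorphism is induced by a morphism $f\colon N \to M$ in the derived category of $\mathrm{Ch}(\mathcal{NM}od_R)$.

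The remaining and main step is to show that this $f$ is itself a derived-category isomorphism. Evaluating both homology theories on $S^0 \wedge \underline{c}$ and using the natural identification $h_t^\C(S^0\wedge\underline{c};E) \cong H_t(E)(c)$---which follows from $\overline{S^0\wedge\underline{c}}\simeq\underline{c}$ (concentrated in degree $0$) together with the Yoneda isomorphism $E\otimes_R\underline{c}\cong E(c)$---one reads off that $f$ induces isomorphisms $H_\ast(N)(c)\xrightarrow{\cong}H_\ast(M)(c)$ naturally in $c$, i.e.\ an isomorphism $H_\ast(N) \cong H_\ast(M)$ of graded right $R$-modules. Hence $f$ is a quasi-isomorphism and therefore an isomorphism in the derived category. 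The potential nonuniqueness of $f$ (the phantom-like issue noted after Theorem~\ref{representation}) does not cause trouble, since \emph{any} $f$ representing the Chern-character isomorphism of homology theories must have the required effect on coefficient systems.
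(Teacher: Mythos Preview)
Your proof is correct and follows essentially the same approach as the paper's (very terse) proof, which simply invokes the morphism-representation part of Theorem~\ref{representation} together with the Dold-Kan correspondence; you have unpacked what that invocation actually means. One small referencing issue: you appeal to ``the morphism clause of Corollary~\ref{rat_unsimplified}'', but that corollary as stated has no such clause---the morphism representation lives in Theorem~\ref{representation} and is transported to the chain-complex side via the Dold-Kan correspondence of Subsection~\ref{DK} (this is exactly what the paper's proof says). With that citation corrected, your argument goes through.
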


\begin{proof}
This follows directly from the second part of Theorem~\ref{representation} (representation of morphisms), together with the Dold-Kan correspondence described in Subsection~\ref{DK} and Remark~\ref{2model}.
\end{proof}

\begin{rema}\label{rem:illusie} In the case that $M$ is bounded, \cite[Sec.~4.5,~4.6]{illusie} describes how one can find out whether the condition of Lemma~\ref{decompose} holds, using a sequence of obstructions living in $\mathrm{Ext}^i(H_{p+i-1}(M), H_{p}(M))$ with $i\ge 2$. The exposition assumes that $R$ has a unit, but this is not used in the argumentation.
\end{rema}

We now discuss one approach to construct Chern characters. The following result was announced in the Introduction as Theorem~\ref{intro:chern_flat}.

\begin{prop}\label{chern}
Suppose that $h_\ast^{\C}$ is a rational $\C$-homology theory with the property that all coefficient systems $h_{t}^{\C}$ are flat as right $\C$-modules. Then there exists a Chern character for $h_\ast^{\C}$, which is natural in the homology theory $h_\ast^\C$.
\end{prop}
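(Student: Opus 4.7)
The plan is to unwind the representation theorem and reduce the statement to a Künneth-type degeneration. By Corollary~\ref{rat_unsimplified} we may write
\[
h_\ast^\C(X) \cong H_\ast(E \otimes_R \overline{X})
\]
for some right $R$-chain complex $E$, where $R = \mathbb{Q}\C$. Evaluating at $X = \mathrm{S}^0 \wedge \underline{c}$, the complex $\overline{\mathrm{S}^0 \wedge \underline{c}}$ is concentrated in degree zero with value the linearisation $\mathbb{Q}\underline{c}$, so the Yoneda-type identification $E \otimes_R \mathbb{Q}\underline{c} \cong E(c)$ produces a natural isomorphism of right $R$-modules $h_t^\C \cong H_t(E)$. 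The hypothesis is therefore that each $H_t(E)$ is flat.

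Next I would invoke the Künneth spectral sequence for the derived tensor product,
\[
E^2_{p,q} = \bigoplus_{s+t=q} \mathrm{Tor}_p^R\bigl(H_t(E), H_s(\overline{X})\bigr) \;\Longrightarrow\; h_{p+q}^\C(X).
\]
Flatness of every $H_t(E)$ forces the terms with $p \geq 1$ to vanish, so the spectral sequence collapses onto its zeroth row and yields a natural isomorphism
\[
h_n^\C(X) \cong \bigoplus_{s+t=n} H_t(E) \otimes_R H_s(\overline{X}).
\]
To identify each summand with Bredon homology I would observe that $H_s(\overline{X})$ is, objectwise and thus as a left $R$-module, the reduced rational singular homology of $X$, which is naturally isomorphic to the homology of the cellular chain complex $C^{\mathrm{cell}}_*(X; \mathbb{Q})$. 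Flatness of $h_t^\C = H_t(E)$ makes $h_t^\C \otimes_R -$ exact, so
\[
H_t(E) \otimes_R H_s(\overline{X}) \cong H_s\bigl(h_t^\C \otimes_R C^{\mathrm{cell}}_*(X; \mathbb{Q})\bigr) = h_s^{\C, \mathrm{Br}}(X; h_t^\C),
\]
and summing over $s+t=n$ assembles the desired Chern character.

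For naturality in $h_\ast^\C$, the second part of Theorem~\ref{representation} guarantees that any morphism $h_\ast^\C \to (h')_\ast^\C$ is represented by some $f \colon E \to E'$ in the derived category of right $R$-chain complexes. Both the Künneth spectral sequence and the identifications above are functorial in $E$ and in the coefficient module, so the induced Chern character is natural.

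The main obstacle will be setting up the Künneth spectral sequence for non-degenerate modules over the (generally non-unital) category algebra $R = \mathbb{Q}\C$. The classical Cartan--Eilenberg hyperhomology construction should adapt directly, since cofibrant chain complexes of non-degenerate $R$-modules are degreewise projective (as used in the proof of Lemma~\ref{resolve_one_cc}): one can form a Cartan--Eilenberg resolution of $E$ by such projectives and filter the totalisation in the $p$-direction. A minor additional subtlety is the comparison between $H_s(\overline{X})$ and cellular chain complex homology at the level of $R$-modules, but this is routine as both compute reduced rational singular homology objectwise and the comparison is natural.
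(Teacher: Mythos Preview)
Your approach is essentially the same as the paper's: represent the theory by a chain complex $E$, apply the K\"unneth spectral sequence, collapse it using flatness of $H_\ast(E)$, and identify the result with Bredon homology. The naturality argument is also the same.

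There is one genuine gap. You invoke the K\"unneth spectral sequence without restricting $E$, but the standard reference (Mac~Lane, Thm.~12.1, as used in the paper) requires the complexes to be bounded below, and there is no reason for the representing complex $E$ to be bounded. The paper deals with this by first treating the bounded-below case exactly as you do, and then for general $E$ passing to the good truncations $\tau_k E$: each $\tau_k E$ is bounded below with the same (flat) homology in the relevant range, the K\"unneth isomorphisms for the $\tau_k E$ are compatible along the inclusions $\tau_k E \hookrightarrow \tau_{k-1} E$ by naturality, and one recovers the statement for $E$ by taking the colimit (using that homology and $-\otimes_R \overline{X}$ commute with this filtered colimit). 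Your closing paragraph anticipates difficulties, but misidentifies them: the non-unital ring issue is harmless here because cofibrant complexes of non-degenerate $R$-modules are degreewise projective and Lemma~\ref{non-deg-tensor} makes them flat. The real work is the boundedness of $E$, which you should address.

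A minor point: your identification of the summand with Bredon homology via the cellular chain complex is fine, but the paper streamlines this step by appealing to Lemma~\ref{bredon}, which packages the same content using the Atiyah--Hirzebruch spectral sequence. Either route works.
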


\begin{rema}\label{compare-lueck}
This result is similar to \cite[Thm.~4.4]{lueck-chern} where the case of the proper orbit category of a discrete group is treated, with the additional assumption that the homology theory is equivariant, i.\ e. there are proper homology theories for all discrete groups linked via induction isomorphisms. A technical difference is that the flatness assumption is not over the orbit category itself, but over a certain category $\mathbb{Q}\mathrm{Sub}(G,\mathcal{FIN})$, whereas the homology theories are defined on $\mathrm{Or}(G,\mathcal{FIN})$-spaces as usual. Thus, our theorem does not imply L\"uck's theorem directly. 
\end{rema}

\begin{rema}
Taking into account Lemma~\ref{decompose}, we have proved that whenever a chain complex of non-degenerate $R$-modules has flat homology, then it is isomorphic to a trivial complex in the derived category. For bounded complexes, we may also see this as follows: Using the result that over a countable ring, flat modules have projective dimension at most $1$, we see that all higher $\mathrm{Ext}^i$-groups of the homology modules, $i\ge 2$, appearing in Remark~\ref{rem:illusie}, vanish. 
\end{rema}

\begin{proof}
Start with the representation  as in~\eqref{rational_tensor}. First suppose that $E$ is bounded below, say positive. We claim that $\overline{X}$ is degreewise flat. Copying the argument from the proof of \cite[Lemma~2.3.6]{hovey} shows that $\overline{X}$ is degreewise projective in the category $\mathcal{NM}od_R$. Note that a fibration is still the same as a degreewise surjective map. By Lemma~\ref{non-deg-tensor} (b), $\overline{X}$ is degreewise flat.

Having said this, we get a  K\"unneth spectral sequence \cite[Thm.~12.1]{maclane}
\[E^2_{p,q}=\bigoplus_{s+t=q} \mathrm{Tor}^R_p(\mathrm{H}_s(E), \mathrm{H}_t(\overline{X}))\quad\Rightarrow\quad \mathrm{H}_{p+q}(E\otimes_{R} \overline{X})\,.\]
Since the coefficients $\mathrm{H}_s(E)$ are flat, all higher Tor terms vanish and the $E^2$ page is concentrated on the line $p=0$. It thus degenerates and gives an isomorphism
\begin{align*} h_n^\C(X)&\cong\mathrm{H}_{n}(E\otimes_R \overline{X})\cong \bigoplus_{s+t=n} \mathrm{H}_s(E) \otimes_R  \mathrm{H}_t(\overline{X}) \cong \bigoplus_{s+t=n} \mathrm{H}_t(\mathrm{H}_s(E) \otimes_R  \overline{X}) \\
&\cong \bigoplus_{s+t=n} h_t^{\C,\mathrm{Br}}(X;\mathrm{H}_s(E))\,,\end{align*}
where we used flatness of $\mathrm{H}_s(E)$ again, and Lemma~\ref{bredon}. Naturality of the K\"unneth  spectral sequence shows directly that this isomorphism is natural in $X$. Naturality in the homology theory  additionally needs the fact that every morphism of homology theories is induced by a morphism of chain complexes, after possibly replacing $E$ by a fibrant and cofibrant complex, cf.\ Theorem~\ref{representation}.

For arbitrary $E$, let $\tau_k E$ denote the truncations
\[
(\tau_k E)_n=
\begin{cases}
E_n\,, & n\ge k\\
\mathrm{ker}(d_k)\,, & n=k\\
0\,,& n<k
\end{cases}\,\,.\]
There are natural injective chain maps
\[\tau_k E \hookrightarrow E\]
inducing a homology isomorphism in all degrees $\ge k$, whereas the homology of $\tau_k E$ in degrees $<k$ is $0$. In particular, $\mathrm{H}_t(\tau_k E)$ is flat for all $t$.

The maps above exhibit $E$ as the colimit of the sequence
\[\tau_0 E \hookrightarrow \tau_{-1} E\hookrightarrow \tau_{-2}E \hookrightarrow \ldots\,.\]
We now run the above argument with the truncations $\tau_k E$. Note that we need not assume these to be cofibrant, thanks to Lemma~\ref{resolve_one_cc}. The various isomorphisms
\[\mathrm{H}_n(\tau_k E\otimes_R \overline{X})\cong \bigoplus_{s+t=n} \mathrm{H}_s(\mathrm{H}_t(\tau_kE)\otimes_R\overline{X})\]
are natural with respect to the inclusions $\tau_k E\hookrightarrow \tau_{k-1} E$ by naturality of the K\"unneth spectral sequence. Passing to the colimit, the right-hand side obviously gives the desired sum of Bredon homologies. The left-hand side gives $\mathrm{H}_n(E\otimes_R \overline{X})$ since homology commutes with filtered colimits, and so does $-\otimes_R \overline{X}$.
\end{proof}

A cohomological version can be proved in a very similar way:

\begin{prop}
Let $h^{\ast}_\C$ be a rational $\C$-cohomology theory with \emph{projective} coefficient systems. Then there is a Chern character for $h^{\ast}_\C$.
\end{prop}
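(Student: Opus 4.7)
The plan is to follow the strategy of Proposition~\ref{chern}, but to replace its K\"unneth spectral sequence step by a direct chain-level splitting which is available under the stronger hypothesis of projectivity. The overall route is: represent the cohomology theory, identify the coefficient systems with the homology of the representing complex, split the complex in the derived category, and translate back.

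First I would invoke the cohomological variant of Theorem~\ref{representation} sketched at the end of Subsection~\ref{sec:main}, together with the stable Dold--Kan correspondence of Subsection~\ref{DK}, to write the given theory as $h_\C^\ast(-;E)$ for some chain complex $E$ of right $R$-modules, where $R=\mathbb{Q}\C$. Plugging in suspensions of corepresentables then identifies the coefficient systems $h_\C^t$ with the homology modules $\mathrm{H}_t(E)$ in $\mathcal{NM}od_R$ (up to reindexing), so the assumption becomes that each $\mathrm{H}_t(E)$ is projective in $\mathcal{NM}od_R$.

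The crucial step is to show that any such $E$ is quasi-isomorphic to $\bigoplus_s \mathrm{H}_s(E)[s]$ (with zero differentials) in the derived category of $\mathcal{NM}od_R$. Writing $Z_s=\ker d_s$ and $B_s=\operatorname{im} d_{s+1}$ inside $E_s$, both are still non-degenerate since submodules and quotients of non-degenerate modules are, and the short exact sequence
\[
0 \longrightarrow B_s \longrightarrow Z_s \longrightarrow \mathrm{H}_s(E) \longrightarrow 0
\]
splits by projectivity of $\mathrm{H}_s(E)$. Choosing sections $\sigma_s\colon \mathrm{H}_s(E) \hookrightarrow Z_s \hookrightarrow E_s$, the resulting map $\bigoplus_s\sigma_s\colon \bigoplus_s \mathrm{H}_s(E)[s] \to E$ is automatically a chain map (its image lies in the cycles) and induces the identity on homology, hence is a quasi-isomorphism.

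Finally, the cohomological analogue of Lemma~\ref{decompose}, whose proof is formally identical, translates this derived-category decomposition of $E$ into a decomposition of $h_\C^\ast$ as a direct sum of shifted Bredon cohomology theories $h_{\C,\mathrm{Br}}^\ast(-;\mathrm{H}_s(E))$, which is the desired Chern character. The main source of friction I anticipate is bookkeeping around variance conventions (left vs.\ right modules, $\C$ vs.\ $\Cop$) when passing between the representation theorem and the derived category of $\mathbb{Q}\C$-modules; the chain-level splitting itself is robust to these choices. Notably, unlike in the proof of Proposition~\ref{chern}, the truncation and colimit step becomes unnecessary, since projectivity produces a splitting at the chain-complex level directly rather than merely forcing a spectral sequence to collapse.
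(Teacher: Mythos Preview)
Your argument is correct and takes a genuinely different route from the paper. The paper proceeds in direct analogy with Proposition~\ref{chern}: it invokes the cohomological K\"unneth spectral sequence
\[
E_2^{p,q}=\bigoplus_{s+t=q} \mathrm{Ext}_R^p(\mathrm{H}_s(\overline{X}), \mathrm{H}_t(E))\quad\Rightarrow\quad \mathrm{H}_{p+q}(\mathrm{Hom}_R(\overline{X},E))
\]
and uses projectivity of $\mathrm{H}_t(E)$ to kill the higher Ext terms, collapsing the spectral sequence onto the $p=0$ line. Your approach instead exploits projectivity at the chain level to split $Z_s \twoheadrightarrow \mathrm{H}_s(E)$ and build an explicit quasi-isomorphism $\bigoplus_s \mathrm{H}_s(E)[s]\to E$, then appeals to (the cohomological analogue of) Lemma~\ref{decompose}. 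This is more elementary---no spectral sequence is needed---and, as you note, handles unbounded $E$ in one stroke without the truncation/colimit manoeuvre. The paper's approach, by contrast, keeps the homological and cohomological arguments formally parallel and computes the cohomology groups directly rather than passing through the derived-category criterion.
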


\begin{proof}
The proof is analogous, using a cohomological version of Corollary~\ref{rat_unsimplified} and the cohomological K\"unneth spectral sequence \cite[Thm.~11.34]{rotman}
\[E_2^{p,q}=\bigoplus_{s+t=q} \mathrm{Ext}_R^p(\mathrm{H}_s(\overline{X}), \mathrm{H}_t(E))\quad\Rightarrow\quad \mathrm{H}_{p+q}(\mathrm{Hom}_R(\overline{X},E))\,.\]
Note that to formulate Corollary~\ref{rat_unsimplified} with $\mathrm{Hom}_R$ instead of derived tensor product, we also need to replace $E$ fibrantly, since we don't have a mapping space version of Corollary~\ref{flat} at hand. However, in $\mathrm{Ch}_{\mathbb{Q}}$ and thus in $\mathrm{Fun}(\C,\mathrm{Ch}_{\mathbb{Q}})$, all objects are fibrant.
\end{proof}

\subsection{Mackey functors} 
\label{mackey}

In this subsection, $\C$ is the orbit category of a group $G$.  We will show that the flatness assumption of Corollary~\ref{chern} holds if $G$ is finite and the coefficients can be extended to Mackey functors. 

Recall that if $\F$ is a family of subgroups of $G$ (non-empty and closed under subconjugation), then the orbit category $\mathrm{Or}(G,\F)$ has as objects the transitive $G$-spaces $G/H$ for $H\in \F$, and as morphisms all $G$-linear maps. Recall further that  an EI category is a category  in which all endomorphisms are invertible.

We will use the following explicit description of the orbit category:

\begin{lemma}\label{orb_explicit} Let $G$ be an arbitrary group and $\F$ a family of subgroups.\\
(a) For $H, K\in \F$, there is an isomorphism
\begin{align*}  
\phi^{H,K}\colon K\backslash \mathrm{Trans}_G(H,K) &\cong \mathrm{Hom}_{\mathrm{Or}(G,\F)}(G/H, G/K), \\
g &\mapsto \phi^{H,K}(g)
\end{align*}
with 
\[\mathrm{Trans}_G(H,K)=\{g \in G; \, g H g^{-1}\subseteq K\}\]
and 
\[(\phi^{H,K}(g))(xH)=xg^{-1}K\] for $x \in G$. Furthermore, for $L\in \F$ and $g'\in \mathrm{Trans}_G(K,L)$, we have
\[\phi^{K,L}(g')\circ \phi^{H,K}(g)=\phi^{H,L}(g'g)\,.\]
(b) If $\F$ consists of finite groups only, then $\mathrm{Or}(G,\F)$ is an EI category.
\end{lemma}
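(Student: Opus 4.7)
The plan is a direct verification using the standard correspondence between $G$-maps out of a transitive $G$-set and points in the target, i.e., that a $G$-map $f\colon G/H\to G/K$ is determined by $f(H)\in G/K$.

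For part (a), I would first write any such map as $f(xH)=xg^{-1}K$, where $g^{-1}K=f(H)$, and then check that well-definedness ($hH\mapsto$ same coset for all $h\in H$) is equivalent to the condition $h g^{-1}K=g^{-1}K$ for all $h\in H$, i.e.\ to $gHg^{-1}\subseteq K$, which is precisely the statement $g\in\mathrm{Trans}_G(H,K)$. This shows the assignment $g\mapsto\phi^{H,K}(g)$ hits every $G$-map. For injectivity modulo the left $K$-action, the maps $\phi^{H,K}(g_1)$ and $\phi^{H,K}(g_2)$ agree iff $g_1^{-1}K=g_2^{-1}K$, which is the same as $g_2\in Kg_1$, so passing to $K\backslash\mathrm{Trans}_G(H,K)$ yields a bijection. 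The composition formula is a one-line direct computation:
\[
(\phi^{K,L}(g')\circ\phi^{H,K}(g))(xH)=\phi^{K,L}(g')(xg^{-1}K)=xg^{-1}g'^{-1}L=x(g'g)^{-1}L,
\]
which is exactly $\phi^{H,L}(g'g)(xH)$.

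For part (b), I need to show that every endomorphism of $G/H$ in $\mathrm{Or}(G,\F)$ is an isomorphism when $H$ is finite. By part (a), such an endomorphism is $\phi^{H,H}(g)$ with $gHg^{-1}\subseteq H$. Because conjugation by $g$ is an injective self-map of the finite set $H$, the inclusion $gHg^{-1}\subseteq H$ must be an equality, so $g\in N_G(H)$. In particular $g^{-1}\in N_G(H)\subseteq\mathrm{Trans}_G(H,H)$, and the composition formula gives $\phi^{H,H}(g^{-1})\circ\phi^{H,H}(g)=\phi^{H,H}(e)=\mathrm{id}_{G/H}$, and similarly on the other side.

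There is no real obstacle here; the statement is essentially a bookkeeping exercise. The only points to stay careful about are the left/right conventions (the inversion that appears in $xg^{-1}K$ is what makes composition covariant in $g$) and the observation that finiteness of $H$ is used exactly once, to promote the inclusion $gHg^{-1}\subseteq H$ to an equality, which is where the EI property comes from and why the hypothesis on $\F$ is needed.
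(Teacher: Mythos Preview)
Your proposal is correct and takes essentially the same approach as the paper. The paper's own proof is extremely terse---it simply declares part (a) to be well-known, following from the transitivity of the $G$-sets $G/H$, and for part (b) gives exactly your cardinality argument that $gHg^{-1}\subseteq H$ forces equality when $H$ is finite; your write-up is just a careful unpacking of what the paper leaves implicit.
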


If no confusion can arise, we will only write $\phi$ for $\phi^{H,K}$.

\begin{proof}
Part (a) is well-known and follows immediately from the fact that the objects of $G$ are transitive $G$-spaces. For part (b), note that if $H$ is finite, then $gHg^{-1}\subseteq H$ implies by cardinality reasons that $gHg^{-1}=H$ and thus $g^{-1}Hg=H$.
\end{proof}

From now on, $G$ is finite and $\mathcal{F}$ is the family of all subgroups. Recall that a (rational) Mackey functor assigns to any subgroup $H$ of $G$ a $\mathbb{Q}$-vector space $M(H)$ and to any inclusion $K\subseteq H$ two homomorphisms
\[I_K^H\colon M(K)\longrightarrow M(H)\quad\mbox{and}\quad R_K^H\colon M(H)\longrightarrow M(K)\,,\]
called induction and restriction, and for any $g\in G$ conjugation homomorphisms
\[c_g\colon M(H)\longrightarrow M(gHg^{-1})\,.\]
These have to satisfy  certain relations listed for instance in \cite{T_W_structure}.

Let $\Omega_{\mathbb{Q}}(G)$ denote the Mackey category of $G$; we take \cite[Prop.~2.2]{T_W_structure} as its definition. It is a category enriched in $\mathbb{Q}$-vector spaces which is not the free $\mathbb{Q}$-linear category on a category. Its objects the finite $G$-sets. By design, a Mackey functor is just a $\mathbb{Q}$-linear functor $\Omega_{\mathbb{Q}}(G)\longrightarrow \mathrm{Vect}_{\mathbb{Q}}$.

\begin{lemma} There is a canonical functor  $I\colon \mathrm{Or}(G)\rightarrow \Omega_{\mathbb{Q}}(G)$ defined by  $I(G/H)=H$ and 
\[I(\phi(g))=I_{gHg^{-1}}^K c_g\]
for $g\in \mathrm{Trans}_G(H,K)$.
\end{lemma}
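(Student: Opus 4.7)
The plan is to verify three things: well-definedness of $I$ on morphisms, preservation of identities, and compatibility with composition. All three follow from the standard Mackey axioms: (i) transitivity of induction, $I_{L}^{M}\circ I_{L'}^{L}=I_{L'}^{M}$ for $L'\leq L\leq M$; (ii) interaction of conjugation with induction, $c_x\circ I_{L}^{H}=I_{xLx^{-1}}^{xHx^{-1}}\circ c_x$ for $L\leq H$ and $x\in G$; (iii) multiplicativity $c_{xy}=c_x\circ c_y$; and (iv) $c_k=\mathrm{id}_{M(K)}$ for $k\in K$.

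First, for well-definedness: the morphism $\phi^{H,K}(g)$ depends on $g\in\mathrm{Trans}_G(H,K)$ only up to left multiplication by $K$, so I must show $I_{kgH(kg)^{-1}}^{K}c_{kg}=I_{gHg^{-1}}^{K}c_g$ for $k\in K$. Using $c_{kg}=c_k c_g$, axiom (ii) with $x=k$ and $L=gHg^{-1}\leq K$ rewrites the left-hand side as $c_k\circ I_{gHg^{-1}}^{K}\circ c_g$, and then axiom (iv) kills the outer $c_k$ since $k\in K$. Identities correspond to $g=e$, yielding $I_{H}^{H}c_e=\mathrm{id}_{M(H)}$.

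For composition, given $g\in\mathrm{Trans}_G(H,K)$ and $g'\in\mathrm{Trans}_G(K,L)$, part (a) of Lemma~\ref{orb_explicit} gives $\phi^{K,L}(g')\circ\phi^{H,K}(g)=\phi^{H,L}(g'g)$, so I must check
\[I_{g'Kg'^{-1}}^{L}c_{g'}\circ I_{gHg^{-1}}^{K}c_g=I_{g'gH(g'g)^{-1}}^{L}c_{g'g}.\]
Applying axiom (ii) with $x=g'$ and $L=gHg^{-1}\leq K$, the left-hand side becomes $I_{g'Kg'^{-1}}^{L}\circ I_{g'gHg^{-1}g'^{-1}}^{g'Kg'^{-1}}\circ c_{g'}\circ c_g$, and transitivity of induction (i) together with multiplicativity (iii) collapses this to the desired expression.

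The argument is a direct calculation in the Mackey category; the only point requiring some care is bookkeeping of which subgroup plays which role when applying the conjugation-induction compatibility. There is no genuine obstacle: the definition of $I$ was rigged precisely so that these axioms suffice.
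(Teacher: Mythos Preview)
Your proof is correct and follows the same approach as the paper: a direct verification using the Mackey relations, with the composition check being the same computation (the paper runs it from $I(\phi(g'g))$ outward, you collapse $I(\phi(g'))I(\phi(g))$ inward). Your treatment is in fact more complete, since the paper's proof checks only compatibility with composition and leaves well-definedness with respect to the $K$-coset and preservation of identities implicit.
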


\begin{rema} Since $I$ is injective on objects, it induces a ring homomorphism $I$ on the category algebras \cite[Prop.~3.2.5]{xu}. The category algebra of $\Omega_{\mathbb{Q}}(G)$ is called $\mu_{\mathbb{Q}}(G)$, the Mackey algebra.
\end{rema}

\begin{proof}
Let $H,K,L, g$ and $g'$ be as in Lemma~\ref{orb_explicit}. Calculate:
\begin{align*}
    I(\phi(g')\circ \phi(g))&=I(\phi(g'g)) = I^L_{g'gH(g'g)^{-1}} c_{g'g} = I^L_{g'K(g')^{-1}}I^{g'K(g')^{-1}}_{g'gH(g'g)^{-1}}c_{g'}c_{g}\\
    &= I^L_{g'K(g')^{-1}}c_{g'} I^K_{gHg^{-1}} c_{g}=I(\phi(g'))I(\phi(g))\,.\qedhere
\end{align*}
\end{proof}

\begin{defi}
A left (or right) rational $\mathrm{Or}(G)$-module $M$ is said to \emph{extend to a Mackey functor} if it is of the form $I^{\ast}\widetilde{M}$ for a left (or right) $\Omega_{\mathbb{Q}}(G)$-module $\widetilde{M}$.
\end{defi}

\begin{prop} \label{mackey_over_orbit} 
$\mu_{\mathbb{Q}}(G)$ is a projective left $\mathbb{Q}\mathrm{Or}(G)$-module. 
\end{prop}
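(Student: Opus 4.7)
The plan is to exhibit $\mu_{\mathbb{Q}}(G)$ as a direct sum of direct summands of corepresentable, hence projective, $\mathbb{Q}\mathrm{Or}(G)$-modules. Since $G$ is finite, $\mu_{\mathbb{Q}}(G)$ is a unital algebra with $1 = \sum_{K} e_K$, where $e_K := \mathrm{id}_{G/K}$ runs over representatives of the conjugacy classes of subgroups. We thereby obtain
\[
\mu_{\mathbb{Q}}(G) \;=\; \bigoplus_{K} \mu_{\mathbb{Q}}(G)\, e_K
\]
as left $\mu_{\mathbb{Q}}(G)$-modules, and \emph{a fortiori} as left $\mathbb{Q}\mathrm{Or}(G)$-modules via restriction along $I$. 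Under the equivalence $\Xi$, the summand $\mu_{\mathbb{Q}}(G)\, e_K$ corresponds to the covariant functor $G/H \mapsto \Omega_{\mathbb{Q}}(G)(G/K, G/H)$, so it suffices to show each such functor is projective.

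The main step is a span-basis expansion of this functor. A $\mathbb{Q}$-basis of $\Omega_{\mathbb{Q}}(G)(G/K, G/H)$ is given by isomorphism classes of transitive spans $G/K \xleftarrow{f} G/J \xrightarrow{g} G/H$ of finite $G$-sets. Using the standard equivalence between $G$-sets over $G/K$ and $K$-sets, every such span can be uniquely normalised so that $J \leq K$ and $f$ is the canonical projection; two such normalised pairs $(J, g)$ and $(J', g')$ represent isomorphic spans precisely when $J$ and $J'$ are $K$-conjugate, and, for $J = J'$, when $g' = g \circ L_w$ for some $w \in W_J := N_K(J)/J$ acting on $G/J$ by left multiplication. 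This yields, naturally in $H$,
\[
\Omega_{\mathbb{Q}}(G)(G/K, G/H) \;\cong\; \bigoplus_{[J]_K} \mathbb{Q}\bigl[\mathrm{Or}(G)(G/J, G/H)\bigr]_{W_J},
\]
where $[J]_K$ ranges over $K$-conjugacy classes of subgroups of $K$ and the subscript denotes $W_J$-coinvariants.

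The $W_J$-action on the corepresentable functor $\mathbb{Q}[\mathrm{Or}(G)(G/J, -)]$ by precomposition commutes with the left $\mathbb{Q}\mathrm{Or}(G)$-action by postcomposition, so each summand above arises as a $\mathbb{Q}\mathrm{Or}(G)$-linear quotient. Since $|W_J|$ divides $|G|$ and is therefore invertible in $\mathbb{Q}$, the averaging element $\tfrac{1}{|W_J|}\sum_{w\in W_J} w$ is a $\mathbb{Q}\mathrm{Or}(G)$-linear idempotent whose image realises the $W_J$-coinvariants as a direct summand of $\mathbb{Q}[\mathrm{Or}(G)(G/J, -)]$. Under $\Xi$ the latter corresponds to the module $\mathbb{Q}\mathrm{Or}(G)\, e_{G/J}$, itself a direct summand of the regular module $\mathbb{Q}\mathrm{Or}(G)$ and hence projective. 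Projectivity being preserved under direct summands and direct sums, $\mu_{\mathbb{Q}}(G)$ is projective as a left $\mathbb{Q}\mathrm{Or}(G)$-module.

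The main technical obstacle is the middle paragraph's span-isomorphism bookkeeping: normalising an arbitrary transitive span to have $J \leq K$ with left leg the canonical projection, and then pinning down the residual $W_J$-action via the $G$-set / $K$-set equivalence. Once this is secured, the rest is formal: Maschke-style averaging (using $|G|^{-1} \in \mathbb{Q}$) splits off the coinvariants inside a corepresentable, and closure of projectives under direct summands and direct sums delivers the conclusion.
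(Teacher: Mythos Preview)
Your proof is correct and arrives at essentially the same decomposition as the paper: both exhibit $\mu_{\mathbb{Q}}(G)$ as a direct sum, indexed by pairs (source subgroup $K$, apex $J\le K$ up to $K$-conjugacy), of $N_K(J)/J$-coinvariants of the corepresentable $\mathbb{Q}\mathrm{Or}(G)$-module $\mathbb{Q}[\mathrm{Or}(G)(G/J,-)]$, and both conclude projectivity of each summand via Maschke. The bookkeeping differs: the paper works with the Th\'evenaz--Webb basis $I(\phi(g))R^H_L$ and verifies by hand that an explicit map
\[
\bigoplus_{(H,L)} \mathbb{Q}\mathrm{Hom}_{\mathrm{Or}(G)}(G/L,-)\otimes_{\mathbb{Q}N_G(L)}\mathbb{Q}[N_G(L)/(H\cap N_G(L))]\longrightarrow \mu_{\mathbb{Q}}(G)
\]
is bijective, whereas you read off the same decomposition directly from the span description of $\Omega_{\mathbb{Q}}(G)$ via the $G$-sets/$K$-sets equivalence. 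Your route is a bit more conceptual and avoids the explicit basis check; the paper's route has the advantage of pointing to a concrete reference for the basis and making the isomorphism completely explicit.

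One small slip: with the paper's conventions the objects of $\Omega_{\mathbb{Q}}(G)$ (and of $\mathrm{Or}(G)$) are all subgroups, so $1=\sum_K e_K$ runs over \emph{all} subgroups $K\le G$, not over conjugacy representatives. This does not affect the argument.
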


\begin{rema}
It is not known to us whether the corresponding statement as right modules holds. Thus Corollary~\ref{chern_mackey} cannot be formulated for $G$-\emph{co}homology theories at the moment.
\end{rema}

\begin{proof}
A $\mathbb{Q}$-basis of $\mu_{\mathbb{Q}}(G)$ is given on the bottom of p.~1875 of \cite{T_W_structure} (cf.\ Prop.~3.2,~3.3). It consists of all elements
\[I^K_{gLg^{-1}}c_g R^H_L = I(\phi(g))R^H_L\,,\]
for $L\subseteq H$ and $g\in \mathrm{Trans}_G(L,K)$, up to the following identification:
\begin{align}\label{equiv-rel}I(\phi(g))R^H_L=I(\phi(g'))R^H_{L'} \Leftrightarrow \exists x\in H\cap (g')^{-1}Kg\colon L'=xLx^{-1}\,.\end{align}

Let $\mathscr{P}$ denote a set of representatives of pairs $(H,L)$ with $L\subseteq H$, modulo the relation that for fixed $H$, $L$ may be conjugated by an element from $H$: $(H,L)\sim (H,hLh^{-1})$. Then we define an $\mathrm{Or}(G)$-linear homomorphism
\begin{align*}F\colon \bigoplus_{(H,L)\in\mathscr{P}} \mathbb{Q}\mathrm{Hom}_{\mathrm{Or}(G)}(G/L,-) &\otimes_{\mathbb{Q}N_G(L)} \mathbb{Q}[N_G(L)/(H\cap N_G(L))]  \longrightarrow \mu_{\mathbb{Q}}(G)\,,\\
\phi(g)\otimes n \quad &\mapsto\quad I(\phi(gn))R^H_L\,.
\end{align*}

We will show that $F$ is an isomorphism, which implies the result since the left-hand side is a projective module by the semi-simplicity of all $\mathbb{Q}N_G(L)$.

To see that $F$ is surjective, note that by the result cited above, the right-hand side has a basis of elements $I(\phi(g))R^H_L$ with $L\subseteq H$. We only have to achieve $(H,L)\in\mathscr{P}$. For this, choose $h\in H$ such that $(H,L')\in \mathscr{P}$ with $L'=hLh^{-1}$. For $g'=gh^{-1}$, we have
\[h=(g')^{-1}\cdot 1\cdot g \in (g')^{-1} K g\cap H\]
and thus $I(\phi(g))R^H_L= I(\phi(g'))R^H_{L'}=F(\phi(g)\otimes 1)$ by~\eqref{equiv-rel}.

Next, we show that $F$ is injective. Fix $H$ and $K$ and consider only morphisms from $H$ to $K$. Let $\mathcal{L}$ be a set of representatives of subgroups of $H$ up to conjugation (in $H$). The left-hand side has a basis consisting of all pairs $(L,\phi(g)\otimes 1)$, where $(H,L) \in \mathscr{P}$ and $g\in K\backslash\mathrm{Trans}_G(L,K)/N_G(L)$. Such an element is mapped to the element $I(\phi(gn))R^H_L$ on the right-hand side, which is part of the Th\' evenaz-Webb basis. Thus, we only have to show that $F$ is injective when restricted to the basis $\{(L,\phi(g)\otimes 1)\}$. Suppose that
\[F(L,\phi(g)\otimes 1) = F(L',\phi(g')\otimes 1)\,.\]
By~\eqref{equiv-rel}, there exists  $x\in H\cap (g')^{-1}Kg$ such that $L'=xLx^{-1}$. In particular, $L$ and $L'$ are conjugate in $H$, i.\ e. $L=L'$. Then $x\in N_G(L)$. We have $g'x=kg$ for some $k\in K$ and consequently
\[\phi(g)\otimes 1 = \phi(kg)\otimes 1 = \phi(g'x)\otimes 1 = \phi(g') \otimes x = \phi(g')\otimes 1. \qedhere\]
\end{proof}

\begin{cor}\label{chern_mackey}
Let $G$ be finite and $h_\ast^{G}$ a rational $G$-homology theory with the property that all coefficient systems $h_{t}^{\C}$ extend to Mackey functors. Then there is a Chern character for $h_\ast^{G}$. 
\end{cor}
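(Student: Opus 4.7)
The plan is to apply Lemma~\ref{decompose}: it suffices to show that the chain complex $E$ of right $\mathbb{Q}\mathrm{Or}(G)$-modules representing $h_\ast^G$ (obtained via Corollary~\ref{rat_unsimplified}) is quasi-isomorphic in the derived category to a complex with zero differentials. The key point will be that, under the Mackey extension hypothesis, each homology module $H_t(E)\cong h_t^G$ is in fact \emph{injective} as a right $\mathbb{Q}\mathrm{Or}(G)$-module.

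To establish this injectivity I would argue in two steps. First, since $G$ is finite, the rational Mackey algebra $\mu_{\mathbb{Q}}(G)$ is a semisimple $\mathbb{Q}$-algebra (a classical result of Th\'evenaz--Webb), so every right $\mu_{\mathbb{Q}}(G)$-module -- in particular the Mackey extension $\widetilde{H_t}$ of $h_t^G$ -- is injective. Second, the restriction functor $I^\ast$ from right $\mu_{\mathbb{Q}}(G)$-modules to right $\mathbb{Q}\mathrm{Or}(G)$-modules preserves injective objects: its left adjoint is the extension of scalars $-\otimes_{\mathbb{Q}\mathrm{Or}(G)}\mu_{\mathbb{Q}}(G)$, and this functor is exact precisely because $\mu_{\mathbb{Q}}(G)$ is flat (in fact projective) as a \emph{left} $\mathbb{Q}\mathrm{Or}(G)$-module by Proposition~\ref{mackey_over_orbit}. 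Combining both, $h_t^G=I^\ast\widetilde{H_t}$ is injective.

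Once every homology module of $E$ is injective, the splitting $E\simeq \bigoplus_t H_t(E)[t]$ in the derived category can be produced directly, without appealing to the obstruction theory of Remark~\ref{rem:illusie}: for each $t$, the canonical surjection $Z_t(E)\twoheadrightarrow H_t(E)$ extends along the inclusion $Z_t(E)\hookrightarrow E_t$ to a map $r_t\colon E_t\to H_t(E)$ by injectivity of $H_t(E)$; since $r_t$ automatically vanishes on boundaries, it defines a chain map $E\to H_t(E)[t]$, and the sum $r=\bigoplus_t r_t$ is a quasi-isomorphism by construction. Lemma~\ref{decompose} then yields the Chern character, and the shifted summands are identified with the corresponding Bredon homology theories via Lemma~\ref{bredon}.

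The main subtlety, which also explains the asymmetry flagged in the remark following Proposition~\ref{mackey_over_orbit}, is that one must avoid applying Proposition~\ref{chern} directly: deducing right-flatness of $h_t^G$ over $\mathbb{Q}\mathrm{Or}(G)$ via standard base change would require $\mu_{\mathbb{Q}}(G)$ to be flat as a \emph{right} $\mathbb{Q}\mathrm{Or}(G)$-module, which is precisely the unknown statement. The detour through Lemma~\ref{decompose} works because left-projectivity of $\mu_{\mathbb{Q}}(G)$ translates on right modules into preservation of injectives (rather than projectives) by $I^\ast$, and this is exactly what the injectivity argument above requires.
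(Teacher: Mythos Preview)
Your proof is correct and takes a genuinely different route from the paper's. The paper argues that $M=I^\ast\widetilde{M}$ is \emph{projective} (hence flat) as a right $\mathbb{Q}\mathrm{Or}(G)$-module and then invokes Proposition~\ref{chern} directly. You instead show that $M$ is \emph{injective} and split the representing chain complex by hand, appealing to Lemma~\ref{decompose}. Your use of Proposition~\ref{mackey_over_orbit} is the more transparent one: left-projectivity of $\mu_{\mathbb{Q}}(G)$ over $\mathbb{Q}\mathrm{Or}(G)$ is precisely what makes the left adjoint $-\otimes_{\mathbb{Q}\mathrm{Or}(G)}\mu_{\mathbb{Q}}(G)$ of $I^\ast$ (on right modules) exact, so $I^\ast$ preserves injectives, and semisimplicity of $\mu_{\mathbb{Q}}(G)$ supplies the injectivity of $\widetilde{M}$. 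By contrast, concluding that $I^\ast$ preserves \emph{projectives} on right modules --- which is what one needs to run the paper's argument verbatim --- amounts to $\mu_{\mathbb{Q}}(G)$ being projective as a \emph{right} $\mathbb{Q}\mathrm{Or}(G)$-module, exactly the statement the remark after Proposition~\ref{mackey_over_orbit} leaves open. Your final paragraph diagnoses this asymmetry precisely, and your detour through injectives is a clean way around it. The explicit splitting you give (extend $Z_t\twoheadrightarrow H_t$ along $Z_t\hookrightarrow E_t$ using injectivity, then sum over $t$) is standard and correct.
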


\begin{proof} Let $M=I^{\ast}\widetilde{M}$. By \cite[Thm.~9.1]{T_W_simple}, the Mackey algebra (over $\mathbb{Q}$) is semisimple. Thus, $\widetilde{M}$ is a projective $\mu_{\mathbb{Q}}(G)$-module and hence $M$ is a projective, thus flat, $\mathrm{Or}(G)$-module by Proposition~\ref{mackey_over_orbit}. The existence of the Chern character then follows from Corollary~\ref{chern}.
\end{proof}

\begin{rema}\label{compare-lueck-2}
A similar result was shown by L\"uck \cite[Thm.~5.2]{lueck-chern}. His result holds for arbitrary discrete $G$ (with $\F$ the family of finite subgroups), but refers to equivariant homology theories, and the Mackey condition is formulated for $\mathbb{Q}\mathrm{Sub}(G,\mathcal{FIN})$-modules, cf.\ Remark~\ref{compare-lueck}. L\"uck's definition of Mackey extension is stronger than our definition given below. Thus his examples, namely equivariant bordism (Ex. 1.4, 6.4) and the equivariant homology theories associated to rationalised algebraic $K$-theory and rationalised algebraic $L$-theory of the group ring, as well as rationalised topological $K$-theory of the reduced group $C^\ast$-algebra (Ex. 1.5, Sec. 8) can also serve as examples for us.
\end{rema}

In contrast to L\"uck's result, the argumentation presented here breaks down for infinite $G$. While Proposition~\ref{mackey_over_orbit} still holds true in this case, it is not true any longer that $\mu_{\mathbb{Q}}(G)$ is semi-simple. We give an example showing that it is not even von Neumann regular. Recall from \cite{goodearl} that a ring is called von Neumann regular if every module is flat, and that this is equivalent to the condition that for every ring element $a$, there exists a ring element $x$ such that $axa=a$.

\begin{exam}
Let $G=\mathrm{D}_{\infty}=\langle s,t\mid s^2=t^2=1\rangle$ be the infinite dihedral group, and let $\Omega_{\mathbb{Q}}(G)$ and $\mu_{\mathbb{Q}}(G)$ be defined exactly as above (for finite groups), with the difference that the subgroups $H$ and $K$ are restricted to the finite subgroups of $G$. One can show that   \[\mathrm{Hom}_{\Omega_{\mathbb{Q}}(G)}(\langle s \rangle, \langle t\rangle)= \mathbb{Q}\langle\{ I_1^{\langle t \rangle} g R_1^{\langle s\rangle}; g \in \langle t \rangle \backslash G / \langle s \rangle\}\rangle\,.\]
Representatives of the $(\langle t\rangle, \langle s\rangle)$-double cosets are given by $(st)^k$ for $k\in\mathbb{Z}$. Let $x_k=I_1^{\langle t \rangle} (st)^k R_1^{\langle s\rangle}$ and $y_k=I_1^{\langle s \rangle} (st)^k R_1^{\langle t\rangle}$. The $y_k$ form a $\mathbb{Q}$-basis of the homomorphisms from $\langle t\rangle$ to $\langle s \rangle$ similarly.

Let $a=y_0=I_1^{\langle s \rangle} R_1^{\langle t\rangle}\in \mathrm{Hom}_{\Omega_{\mathbb{Q}}(G)}(\langle t \rangle, \langle s\rangle)$. Compute 
\begin{align*}
a x_k a &= I_1^{\langle s \rangle} R_1^{\langle t\rangle} I_1^{\langle t \rangle} (st)^k R_1^{\langle s\rangle}I_1^{\langle s \rangle} R_1^{\langle t\rangle}= I_1^{\langle s \rangle} (1+t) (st)^k (1+s) R_1^{\langle t\rangle}\\
&= I_1^{\langle s \rangle} ((st)^k + t(st)^k+ (st)^k s + t(st)^ks) R_1^{\langle t\rangle}\\
&=I_1^{\langle s \rangle} ((st)^k + st(st)^k+ (st)^k st + st(st)^kst) R_1^{\langle t\rangle}\\
&= y_k + 2y_{k+1} + y_{k+2}\,.
\end{align*}
It follows easily that the linear equation $axa=a$ has no solution. Thus, $\mu_{\mathbb{Q}}(\mathrm{D}_{\infty})$ is not von Neumann regular.
\end{exam}

\subsection{Hereditary category algebras}
\label{hereditary}

In this subsection we restrict ourselves to finite categories, so that we only deal with unital rings. Recall that a ring is called left hereditary if any submodule of a projective left module is projective. It is called right hereditary if any submodule of a projective right module is projective.

\begin{prop}\label{split-her}
The following are equivalent for a finite category $\C$:\\
(a) $\mathbb{Q}\C$ is right hereditary.\\
(b) \emph{Every} rational $\C$-homology theory possesses a Chern character.
\end{prop}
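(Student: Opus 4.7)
The plan is first to use Lemma~\ref{decompose} together with Corollary~\ref{rat_unsimplified} to reformulate (b) as a purely algebraic statement. Since every chain complex $M$ of right $R$-modules gives rise to a rational $\C$-homology theory (and every rational $\C$-homology theory is of this form), (b) is equivalent to the assertion that every such $M$ is \emph{formal} in $\mathrm{Ho}(\mathrm{Ch}(\mathcal{NM}od_R))$, i.\,e.\ isomorphic to $\bigoplus_n H_n(M)[n]$ (viewed as a complex with zero differential).

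For (a) $\Rightarrow$ (b), I will use that right hereditariness of $R$ is equivalent to the vanishing $\mathrm{Ext}^{i}_R(-,-)=0$ for all $i\ge 2$. By Remark~\ref{rem:illusie}, the Illusie obstructions to formality of a bounded complex live precisely in these $\mathrm{Ext}^i$ groups, so they all vanish and every bounded complex is formal. To handle an unbounded $M$, I will truncate as in the proof of Corollary~\ref{chern}: write $M$ as the sequential colimit of its bounded truncations $\tau_k M$, formal by the previous step, and splice the resulting formalities into an isomorphism $M\cong\bigoplus_n H_n(M)[n]$ using that both homology and the relevant direct sums commute with filtered colimits along the canonical inclusions $\tau_k M\hookrightarrow\tau_{k-1} M$.

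For (b) $\Rightarrow$ (a), I will argue by contradiction. If $R$ is not right hereditary, then there exist right $R$-modules $A,B$ with $\mathrm{Ext}^{2}_R(A,B)\ne 0$; pick a nonzero class $\alpha$ and represent it by a Yoneda $2$-extension $0\to B\to X_1\xrightarrow{d} X_0\to A\to 0$. Let $M=[X_1\xrightarrow{d}X_0]$ be the two-term chain complex concentrated in degrees $1$ and $0$, so that $H_0(M)=A$ and $H_1(M)=B$. The truncation triangle
\[ B[1]\longrightarrow M\longrightarrow A\xrightarrow{\alpha} B[2] \]
in the derived category of $\mathcal{NM}od_R$ has connecting morphism equal to $\alpha$ under the canonical identification $\mathrm{Hom}_{D(R)}(A,B[2])\cong\mathrm{Ext}^{2}_R(A,B)$. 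Since $\alpha\ne 0$ the triangle does not split, so $M\not\cong A\oplus B[1]$ in $D(R)$; in other words $M$ is not formal. By Lemma~\ref{decompose}, the rational $\C$-homology theory $h^\C_\ast(-;M)$ furnished by Corollary~\ref{rat_unsimplified} admits no Chern character, contradicting~(b).

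The main obstacle is the unbounded case of (a)~$\Rightarrow$~(b): the Illusie obstruction argument only handles bounded complexes directly, and one must verify that the formalities of the successive truncations $\tau_k M$ can be chosen compatibly enough that they assemble into a single formality isomorphism for $M$ in the colimit. A minor additional point is checking the standard fact that the connecting morphism in the above truncation triangle really is the Yoneda class $\alpha$ under $\mathrm{Hom}_{D(R)}(A,B[2])\cong\mathrm{Ext}^2_R(A,B)$.
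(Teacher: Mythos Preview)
Your argument for (b) $\Rightarrow$ (a) is essentially the paper's: both produce a two-term complex from a nonzero class in $\mathrm{Ext}^2_R$ and observe that it cannot be formal. Your extra detail about the truncation triangle and its connecting morphism is correct and harmless.

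For (a) $\Rightarrow$ (b), however, you have identified a genuine gap and then left it open. Passing from the formality of each bounded truncation $\tau_k M$ to the formality of $M$ itself requires the individual splittings to be compatible along the inclusions $\tau_k M\hookrightarrow\tau_{k-1}M$, and there is no reason the Illusie obstruction argument hands you such compatible choices. You would have to argue inductively that a given splitting of $\tau_k M$ can be extended to one of $\tau_{k-1}M$; this is not obvious and you do not do it.

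The paper sidesteps this entirely with a much cleaner argument that works directly for unbounded complexes: replace $M$ by a quasi-isomorphic degreewise projective complex $P$ (always possible), and then use that over a right hereditary ring every complex of projectives splits. Concretely, each $B_{n-1}=\mathrm{im}(d_n)$ is a submodule of the projective $P_{n-1}$, hence itself projective, so the short exact sequence $0\to Z_n\to P_n\to B_{n-1}\to 0$ splits and one obtains a chain map $P\to H_\ast(P)$ (zero differentials on the target) which is a quasi-isomorphism. No truncation or colimit argument is needed. I recommend replacing your obstruction-theoretic approach to (a) $\Rightarrow$ (b) with this direct splitting argument.
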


The same statement holds for left hereditarity and cohomology.

\begin{proof}
By Lemma~\ref{decompose}, assertion (b) is equivalent to the fact that every chain complex of non-degenerate
right $R$-modules is isomorphic to a trivial complex in the derived category.

Over every ring, any (right) chain complex is quasi-isomorphic to a degreewise projective one, and it is well-known \cite[Sec.~1.6]{krause} that these split over right hereditary rings. 

Conversely, assume that $\mathbb{Q}\C$ is not right hereditary. One can easily see that this means that $\mathrm{Ext}^2_{\mathbb{Q}\C}$ doesn't vanish, i.\ e. there are right $\mathbb{Q}\C$-modules $M$ and $N$ such that $\mathrm{Ext}^2_{\mathbb{Q}\C}(N,M)\neq 0$. A straightforward triangulated category argument, explained for instance in \cite[Sec.~4.5,~4.6]{illusie}, shows how this can be used to construct a chain complex $L$ with only nontrivial homology groups $H_0(L)\cong M$ and {$H_1(L)\cong N$} which is not isomorphic to the trivial complex $M[0]\oplus N[-1]$ in the derived category.
\end{proof}

For finite EI categories, Liping Li \cite{li} has found out when the category algebra is hereditary. Let us first introduce some notation. We call a category $\C$ finite if it has finitely many objects and morphisms. Assume for simplicity that $\C$ is connected. A morphism $f$ is called \emph{unfactorisable} if it is not an isomorphism, and whenever $f=gh$, then $g$ or $h$ is an isomorphism. Every morphism can be factored as a composition of unfactorisable morphisms. We now define the unique factorisation property which asserts that this factorisation is essentially unique for every morphism. The definition is \cite[Def.~2.7]{li}, slightly changed since we do not assume that $\C$ is skeletal:

\begin{defi}\label{UFP}
The category $\C$ satisfies the \emph{unique factorisation property (UFP)} if for any two chains 
\[x = x_0 \xrightarrow{\alpha_1} x_1 \xrightarrow{\alpha_2} \ldots \xrightarrow{\alpha_n} x_n=y\]
and 
\[x = x'_0 \xrightarrow{\alpha'_1} x'_1 \xrightarrow{\alpha'_2} \ldots \xrightarrow{\alpha'_{n'}} x'_{n'}=y\]
of unfactorisable morphisms $\alpha_i$ and $\alpha'_i$ which have the same composition $f\colon x\rightarrow y$, we have $n=n'$ and there are isomorphisms $h_i\colon x_i\rightarrow x'_i$ for $1\le i \le n-1$ such  that
\[h_1 \alpha_1 = \alpha'_1, \quad \alpha'_n h_{n-1} = \alpha_n \quad\mbox{and}\quad \alpha'_i h_{i-1}=h_{i}\alpha_i\quad\mbox{for $2\le i \le n-1$}\,,\]
i.\ e., the following ladder diagram commutes:
\begin{equation*}
\begin{tikzcd}[row sep=large]
x \arrow[r, "\alpha_1"] \arrow[d, "\mathrm{id}_x"] & x_1 \arrow[r, "\alpha_2"] \arrow[d, "h_1"] & x_2 \arrow[d, "h_2"] \arrow[r, "\alpha_3"] & \ldots \arrow[r,"\alpha_{n-1}"] & x_{n-1} \arrow[r, "\alpha_n"] \arrow[d,"h_{n-1}"] & \phantom{\,.}y\phantom{\,.} \arrow[d,"\mathrm{id}_y"]\\
x \arrow[r, "\alpha'_1"] & x'_1 \arrow[r, "\alpha'_2"] & x'_2 \arrow[r,"\alpha'_3"] & \ldots \arrow[r,"\alpha'_{n-1}"] & x'_{n-1} \arrow[r,"\alpha'_n"] & \phantom{\,.}y\,.
\end{tikzcd}
\end{equation*}
\end{defi}

\begin{citedprop}[{\cite[Thm.~5.3, Prop.~2.8]{li}}] \label{hereditary-UFP}
If $\C$ is a finite EI category, then $\mathbb{Q}\C$ is left hereditary if and only if $\C$ satisfies the UFP. Moreover, being left hereditary and right hereditary is equivalent for $\mathbb{Q}\C$.
\end{citedprop}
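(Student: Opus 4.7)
The approach is to translate the problem into the quiver-with-relations framework for finite-dimensional algebras. Since $\C$ is a finite EI category, $R = \mathbb{Q}\C$ is a finite-dimensional unital $\mathbb{Q}$-algebra with unit $\sum_c \mathrm{id}_c$. First I would identify the Jacobson radical: since every endomorphism monoid of $\C$ is a finite group (by EI plus finiteness), each group algebra $\mathbb{Q}[\mathrm{End}(c)]$ is semisimple by Maschke, so $J=\mathrm{rad}(R)$ is spanned by all non-isomorphisms and $R/J \cong \bigoplus_{[c]} \mathbb{Q}[\mathrm{Aut}(c)]$ (summed over isomorphism classes of objects). The quotient $J/J^{2}$ is then spanned by classes of unfactorizable morphisms, equipped with the natural left/right action of the automorphism groups of target and source.

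The key step uses a standard structure theorem for finite-dimensional algebras with separable semisimple quotient (which applies here because we are in characteristic zero): $R$ is left hereditary if and only if the canonical surjection $\pi \colon T_{S}(J/J^{2}) \twoheadrightarrow R$ is an isomorphism, where $S \subseteq R$ is a Wedderburn--Malcev lift of $R/J$. Any nonzero element of $\ker(\pi)$ reduces, using the grading by the radical filtration, to a homogeneous element in some degree $n$, and so amounts to a pair of formal tensors $\alpha_{1}\otimes_{S}\cdots\otimes_{S}\alpha_{n}$ and $\alpha'_{1}\otimes_{S}\cdots\otimes_{S}\alpha'_{n}$ of unfactorizables whose composites in $R$ agree but which are not identified under the $S$-bimodule relations. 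But the $S$-bimodule relations are precisely given by the automorphism groups of the intermediate objects $x_{i}$ acting on adjacent tensor factors, which is exactly the data of the commutative ladder $(h_{i})$ in Definition~\ref{UFP}. Hence $\pi$ is injective if and only if $\C$ satisfies the UFP. (The equality of lengths $n = n'$ demanded by the UFP is automatic, since the length is detected by the radical filtration.)

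For the left/right symmetry, observe that the UFP as stated in Definition~\ref{UFP} is manifestly invariant under $\C \leftrightarrow \Cop$, and that $\mathbb{Q}(\Cop) \cong (\mathbb{Q}\C)^{\mathrm{op}}$. Applying the already-established characterization to $\Cop$ gives: $\mathbb{Q}\C$ is right hereditary iff $\Cop$ has the UFP iff $\C$ has the UFP iff $\mathbb{Q}\C$ is left hereditary. (Alternatively, one could appeal to the classical theorem that left and right global dimension agree for finite-dimensional algebras over a field.) I expect the main obstacle to be the injectivity analysis of $\pi$: carefully matching bimodule relations in the tensor algebra with the ladder of isomorphisms in the UFP requires keeping track of how the finite groups $\mathrm{Aut}(x_{i})$ simultaneously act on two adjacent tensor factors, and choosing representatives of isomorphism classes of intermediate objects so that the correspondence becomes canonical.
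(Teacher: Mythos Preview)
The paper does not actually prove the first assertion; this is a \emph{cited} proposition, and the paper's proof simply points to Li's Proposition~2.8 and Theorem~5.3. The only argument supplied by the paper itself is for the second assertion (left hereditary $\Leftrightarrow$ right hereditary), and there your argument is identical to the paper's: use $(\mathbb{Q}\C)^{\mathrm{op}}\cong\mathbb{Q}(\Cop)$ together with the observation that the UFP is self-dual.

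Your proposal therefore goes well beyond what the paper does, by sketching an independent proof of Li's theorem via the tensor-algebra characterisation $R\cong T_{S}(J/J^{2})$ of hereditary algebras with separable semisimple quotient. This is indeed the natural route and is close in spirit to Li's own argument. Two points deserve care, however. First, your reduction of a kernel element of $\pi$ to ``a pair of formal tensors'' is not literally correct: a kernel element is a $\mathbb{Q}$-linear combination of pure tensors, and you must use that the morphisms of $\C$ form a $\mathbb{Q}$-basis of $R$ to separate it into contributions indexed by the common composite $f$, and then argue that UFP forces all factorisations of a fixed $f$ to represent the same element of $T_{S}(J/J^{2})$. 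Second, your claim that the equality of lengths $n=n'$ is ``automatic from the radical filtration'' requires justification: you need that a composite of $n$ unfactorisables lies in $J^{n}\setminus J^{n+1}$, which in turn uses that morphisms form a basis and that an unfactorisable cannot be written as a product of two non-isomorphisms. None of this is hard, but it should be said explicitly.
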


\begin{proof}
Note that Li calls hereditary what we call left hereditary. With this in mind, the first statement follows directly from his Proposition 2.8 and Theorem 5.3. The second statement follows from this since $(\mathbb{Q}\C)^{\mathrm{op}}\cong \mathbb{Q}(\Cop)$, and $\Cop$ satisfies the UFP if and only if $\C$ does.
\end{proof}

\begin{cor}\label{split-UFP}
If $\C$ is a finite EI category, then $\C$ satisfies the UFP if and only if \emph{every} rational $\C$-homology theory possesses a Chern character, if and only if every $\C$-cohomology theory does.
\end{cor}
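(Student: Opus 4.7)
The plan is to derive Corollary~\ref{split-UFP} essentially as a formal consequence of the two ingredients that have just been set up, namely Proposition~\ref{split-her} and Proposition~\ref{hereditary-UFP}. The former converts the Chern character question into a ring-theoretic one about hereditarity of $\mathbb{Q}\C$, while the latter converts hereditarity of $\mathbb{Q}\C$ into the combinatorial condition (UFP) on the indexing category $\C$. So the proof is really just a chain of biconditionals, with no genuine new input required.

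Concretely, I would argue as follows. First I would invoke Proposition~\ref{split-her} in its homological form to get the equivalence: every rational $\C$-homology theory admits a Chern character if and only if $\mathbb{Q}\C$ is right hereditary. Next, I would invoke Proposition~\ref{hereditary-UFP}, which states that for a finite EI category being left hereditary and being right hereditary coincide for $\mathbb{Q}\C$, and moreover that either is equivalent to $\C$ satisfying the UFP. Combining these chains gives the first biconditional of the corollary. For the cohomological half, I would apply Proposition~\ref{split-her} in its cohomological form (stated in the sentence immediately following the proposition) to obtain that every rational $\C$-cohomology theory admits a Chern character if and only if $\mathbb{Q}\C$ is left hereditary, and then again use the left/right symmetry from Proposition~\ref{hereditary-UFP} to identify this condition with the UFP.

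Since every step is a quotation of a previously proved result, there is no real obstacle; one just has to make sure to cite the correct "side" of Proposition~\ref{split-her} (right hereditary for homology, left hereditary for cohomology) and appeal to the second assertion of Proposition~\ref{hereditary-UFP} so that the homological and cohomological conditions are seen to coincide via the single UFP condition on $\C$. The only point worth emphasising, as a sanity check, is that Proposition~\ref{hereditary-UFP} itself uses the observation $(\mathbb{Q}\C)^{\mathrm{op}}\cong\mathbb{Q}(\Cop)$ together with the fact that $\C$ satisfies the UFP iff $\Cop$ does, so the left/right symmetry is genuinely available and the three properties in the corollary collapse to one.
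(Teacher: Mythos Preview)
Your proposal is correct and matches the paper's approach exactly: the corollary is stated without proof in the paper, as it is an immediate combination of Proposition~\ref{split-her} (and its cohomological variant) with Proposition~\ref{hereditary-UFP}. Your care in citing the correct side (right hereditary for homology, left hereditary for cohomology) and invoking the left/right symmetry from Proposition~\ref{hereditary-UFP} is precisely what is needed.
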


We finally analyse the case of orbit categories, heading to Theorem~\ref{intro:chern_hereditary} from the Introduction. Let $G$ be a group and $\F$ a family of subgroups of $G$.

\begin{prop} \label{orbit-UFP} Let $G$ be a group and $\F$ a family of finite subgroups.
The category $\mathrm{Or}(G,\F)$ satisfies the UFP if and only if $\F$ consists only of cyclic subgroups of prime power order (where different prime bases may occur in the same family). 
\end{prop}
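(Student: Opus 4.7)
My plan is to prove both directions by analysing factorisations of morphisms through the maximal subgroups of $H$, using the elementary fact that a finite group has a unique maximal subgroup if and only if it is cyclic of prime power order.

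For the necessity direction, suppose $\mathrm{Or}(G,\F)$ satisfies UFP and take $H \in \F$. Since $\F$ is closed under subconjugation, every subgroup of $H$ lies in $\F$. Assume for contradiction that $H$ has two distinct maximal subgroups $M_1 \neq M_2$. If $|M_1| \neq |M_2|$, then in any two factorisations of $\phi(1) \colon G/1 \to G/H$ passing through $M_1$ and $M_2$ respectively, the intermediate orbit-category objects $G/M_i$ have different cardinalities, so no isomorphism between them can exist and UFP fails. Otherwise I choose the two maximal subgroups so that $L := M_1 \cap M_2$ is maximal in both $M_1$ and $M_2$ (for instance in the non-cyclic $p$-group case both $M_i$ are normal of index $p$ and this follows from the Frattini argument), and consider the two $2$-step unfactorisable chains $G/L \to G/M_i \to G/H$. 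The UFP ladder then requires an element $n \in G$ with $nM_1 n^{-1} = M_2$ and $n \in M_2$ (from comparing the two morphisms $G/L \to G/M_2$ in the lower ladder square); but $n \in M_2$ implies $n^{-1}M_2 n = M_2$, so $M_1 = n^{-1}M_2 n = M_2$, a contradiction.

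For the sufficiency direction, assume every $H \in \F$ is cyclic of prime power order. By pre-composing with the isomorphism $\phi(g) \colon G/H \xrightarrow{\sim} G/gHg^{-1}$ it suffices to handle morphisms of the form $\phi(1)\colon G/H \to G/K$ with $H \subseteq K$ both cyclic of orders $p^m \subseteq p^n$. For any factorisation $\phi(g_\ell) \circ \cdots \circ \phi(g_1)$ into unfactorisable morphisms, the intermediate $x_i$ is necessarily cyclic of order $p^{m+i}$, so the length $\ell = n-m$ is an invariant of the factorisation; setting $q_i := g_\ell g_{\ell-1} \cdots g_{i+1}$, an easy induction (using that conjugation commutes with the formation of the unique maximal subgroup) gives $q_i x_i q_i^{-1} = y_i$, where $y_i \subseteq K$ denotes the unique subgroup of order $p^{m+i}$. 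Given two such factorisations with data $(x_i, g_i)$ and $(x'_i, g'_i)$, let $p_\ell := g_\ell \cdots g_1$ and $p'_\ell := g'_\ell \cdots g'_1$, both of which lie in $K$ by the composition condition, and set $s := p_\ell (p'_\ell)^{-1} \in K$. Then $n_i := (q'_i)^{-1} s^{-1} q_i$ defines isomorphisms $h_i := \phi(n_i) \colon G/x_i \to G/x'_i$: the identity $n_i x_i n_i^{-1} = x'_i$ holds because $s \in K$ normalises each $y_i$ (as $K$ is abelian and $y_i \subseteq K$); the ladder identity $n_i g_i = g'_i n_{i-1}$ follows directly from $q_{i-1} = q_i g_i$; and the boundary conditions $h_0 = \mathrm{id}$, $h_\ell = \mathrm{id}$ hold since $n_0 = 1$ (by the choice of $s$) and $n_\ell = s^{-1} \in K$.

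The main difficulty I foresee lies in the necessity direction when $H$ is a non-$p$-group whose maximal subgroups all have the same order but no two of which intersect in a common maximal subgroup; in such cases one must either find a length mismatch in the subgroup chains to rule out UFP directly, or carry out a more delicate ladder analysis along a longer chain, using that the ladder forces the entire intermediate data to be conjugate level by level.
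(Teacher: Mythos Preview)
Your sufficiency argument is correct and is in fact cleaner than the paper's: where the paper first normalises each chain so that all but the last morphism is $\phi(1)$ and then compares, you write down the ladder isomorphisms $h_i=\phi(n_i)$ directly via the explicit formula $n_i=(q'_i)^{-1}s^{-1}q_i$ and verify all the identities; this works exactly as you say, the key point being that $K$ is abelian so $s\in K$ normalises every $y_i$.

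Your necessity argument is also correct, and the gap you worry about in your final paragraph is illusory. The case ``$H$ is not a $p$-group but all maximal subgroups of $H$ have the same order'' simply cannot occur: if all maximal subgroups of a finite group $H$ have the same index $k$, then for every prime $p$ dividing $|H|$ a Sylow $p$-subgroup is contained in some maximal subgroup, so $k$ is coprime to $p$; hence $k$ is coprime to $|H|$, forcing $k=1$, a contradiction. Once you insert this one-line observation, your two cases (maximal subgroups of different orders; $p$-group with two maximal subgroups) exhaust all possibilities, and your argument in each case is sound.

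The paper's necessity proof is organised quite differently: it first uses chains from $G/1$ to $G/F$ to show that any two subgroups of $F$ of the same order are $G$-conjugate, then deduces that $F$ is a $p$-group, then proves separately that every subgroup of $F$ is normal in $F$, then that $F$ has a unique maximal subgroup, and finally concludes cyclicity via the centre. Your route is shorter: you go straight for the unique-maximal-subgroup property via a two-step UFP violation and invoke the elementary characterisation of cyclic $p$-groups. The trade-off is that the paper's argument never needs the little group-theoretic lemma above, while yours does; but that lemma is entirely elementary, so on balance your approach is the more economical one.
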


In particular, if $\F$ is the family of all subgroups, then this is the case if and only if $G$ is of the form $\mathbb{Z}/p^k$ for some $k$. Note the formal similarity of this result to Triantafillou's results in \cite{triantafillou}.

\begin{proof}
\emph{The 'only if' part.} Suppose that $\mathrm{Or}(G,\F)$ has the UFP. Let $F\in \F$. Let $H$ and $K$ be two subgroups of $F$. Let \[1\subseteq H_1\subseteq H_2 \subseteq \ldots H_i=H \subseteq H_{i+1}\subseteq \ldots \subseteq H_n= F \]
be a chain of subgroups such that $H_l\subseteq H_{l+1}$ is a maximal subgroup, and similarly
\[1\subseteq K_1\subseteq K_2 \subseteq \ldots K_j=K \subseteq K_{j+1}\subseteq \ldots \subseteq K_m= F\,. \]

Recall the bijection $\phi$ from Lemma~\ref{orb_explicit}. We can factor the morphism $\phi(1)$ as a product of unfactorisables in two ways: Firstly, as
\[G/1 \xrightarrow{\phi(1)} G/H_1  \xrightarrow{\phi(1)} G/H_2 \xrightarrow{\phi(1)} \ldots \xrightarrow{\phi(1)} G/H_n= G/F\]
and secondly, as
\[G/1 \xrightarrow{\phi(1)} G/K_1  \xrightarrow{\phi(1)} G/K_2 \xrightarrow{\phi(1)} \ldots \xrightarrow{\phi(1)} G/K_m= G/F\,.\]
It follows from the UFP that $m=n$ and that for all $l$, $G/H_l$ and $G/K_l$ are isomorphic in $\mathrm{Or}(G,\F)$, i.\ e., $H_l$ and $K_l$ are conjugate in $G$.  Since $H$ and $K$ were arbitrary, it follows that for any two subgroups of $F$, one is subconjugate to the other in $G$. If $H$ and $K$ have the same order, they are thus conjugate in $G$.

This implies that $F$ has to be a $p$-group for some $p$. Indeed, suppose that two different primes $p$ and $q$ divide $|F|$. Then we can choose $H$ of order $p$ and $K$ of order $q$. It follows that $H=H_1$ and $K=K_1$, so $H$ and $K$ are conjugate which is absurd since they have different orders. 

Next, we prove that $F$ has only normal subgroups. Indeed, let $L$ be minimal non-normal. Then $L$ is different from $1$. Let $K$ be a maximal proper subgroup of $L$ which is thus normal in $F$. Let
\[L=L_0\subseteq L_1\subseteq \ldots \subseteq L_n=F\]
be a chain of subgroups such that $L_i\subseteq L_{i+1}$ is maximal.
For $f\in F\subseteq\mathrm{Trans}_G(K,L)$, the morphism $\phi(1)\colon G/K\rightarrow G/F$ has the two factorisations
\[G/K\xrightarrow{\phi(1)} G/L\xrightarrow{\phi(1)} G/L_1 \xrightarrow{\phi(1)} \ldots \xrightarrow{\phi(1)} G/F\] and
\[G/K\xrightarrow{\phi(f)} G/L\xrightarrow{\phi(1)} G/L_1 \xrightarrow{\phi(1)} \ldots \xrightarrow{\phi(1)} G/F\,.\]
By the UFP, there is $g\in N_G(L)$ such that $\phi(g)=\phi(f)\colon G/K\rightarrow G/L$, i.\ e. $g=f$ modulo $L$. Thus, $f\in L N_G(L)=N_G(L)$ and $L$ is normal in $F$.

Finally, we show that $F$ has only one maximal subgroup. For this, consider any maximal subgroup $H$ of $F$, and extend it to a chain
\[1\subseteq H_1\subseteq H_2 \subseteq \ldots H_n=H \subseteq F \]
where $H_i$ is a maximal subgroup of $H_{i+1}$. Let $g\in \mathrm{Trans}_G(H,F)$ be arbitrary. Consider the following two factorisations of $\phi(g)\colon 1\rightarrow F$:
\[G/1 \xrightarrow{\phi(1)} G/H_1  \xrightarrow{\phi(1)} G/H_2 \xrightarrow{\phi(1)} \ldots G/H_{n}=G/H \xrightarrow{\phi(g)} G/F\]
and
\[G/1 \xrightarrow{\phi(g)} G/H_1  \xrightarrow{\phi(1)} G/H_2 \xrightarrow{\phi(1)} \ldots G/H_{n}=G/H \xrightarrow{\phi(1)} G/F\,.\]
By the UFP, there is $h\in N_G(H)$ such that $\phi(h)=\phi(g)$, i.\ e. 
$h=g$ in $F\backslash\mathrm{Trans}_G(H,F)$. 
Since $H$ is normal in $F$, we have $F\subseteq N_G(H)$ and it follows that \[N_G(H)=\mathrm{Trans}_G(H,F)\,.\]
Now, suppose that $H'$ is another maximal subgroup of $F$. Then $H$ and $H'$ are conjugate via some $g\in G$. It follows that $g\in \mathrm{Trans}_G(H,F)=N_G(H)$, so $H=H'$. Thus, $F$ has only one maximal subgroup.  

We claim that this forces $F$ to be cyclic, and show this claim by induction over the order of $F$. Since $F$ is a $p$-group, it has a non-trivial center $C$. $F/C$ has only one maximal subgroup as well, and it follows that $F/C$ is cyclic. It is an easy exercise to show that if the quotient of the group by its center is cyclic, the group has to be abelian. Thus, $F$ is abelian. From the classification of finite abelian groups, $F$ is cyclic.

\emph{The 'if' part.} Now, suppose that $\F$ only has cyclic members of prime power order. Given a chain 
\[G/H_0 \xrightarrow{\phi(g_1)} G/H_1 \xrightarrow{\phi(g_2)} G/H_2  \ldots \xrightarrow{\phi(g_n)} G/H_n\]
of unfactorisable morphisms, we first manipulate it as follows using the equivalence relation explained in Definition~\ref{UFP}: Substitute $H'_1=g_1^{-1}H_1g_1$, $g'_1=1$ and $g'_2=g_2g_1$, i.\ e. we consider the factorisation 
\[G/H_0\xrightarrow{\phi(1)} G/H'_1 \xrightarrow{\phi(g_2g_1)} G/H_2\xrightarrow{\phi(g_3)} G/H_3 \ldots \xrightarrow{\phi(g_n)} G/H_n\]
with the same composition as before. Repeating this step at positions $2$ through $n-1$, we  arrive at a chain 
\[G/H_0 \xrightarrow{\phi(1)} G/H'_1 \xrightarrow{\phi(1)} G/H'_2 \xrightarrow{\phi(1)} G/H'_3 \ldots G/H'_{n-1} \xrightarrow{\phi(g')} G/H_n\] with composition $g'$ modulo $H_n$. Since our replacement algorithm followed the definition of UFP, we only need to compare morphisms in such a normal form. Note that since $H'_{n-1}$ is cyclic of order a power of $p$, the index $[H'_i:H'_{i-1}]$ is always $p$ since the morphisms of the chain are unfactorisable. This is true for any other chain from $G/H_0$ to $G/H_n$ and consequently, the length of such a chain is always $n$. Let \[G/H_0 \xrightarrow{\phi(1)} G/H''_1 \xrightarrow{\phi(1)} G/H''_2 \xrightarrow{\phi(1)} G/H''_3 \ldots G/H''_{n-1} \xrightarrow{\phi(g'')} G/H_n\]
be another chain with the same composition, i.\ e. $g''=fg'$ with $f\in H_n$. This implies that 
\[(g')^{-1}H_n g'=(g'')^{-1}H_n g''\,.\]
Thus, $H'_{n-1}$ and $H''_{n-1}$ are both maximal subgroups of $(g')^{-1}H_n g'$, and since this is a cyclic group, they coincide: $H'_{n-1}=H''_{n-1}$. Since $g''=fg$, we get that $\phi(g')=\phi(g'')$. It follows that $H'_i=H''_i$ for all $i\le n-1$ and thus the two chains are equal.
\end{proof}

\bibliographystyle{apalike} 
\bibliography{refs}

\end{document}